\numberwithin{equation}{section}
\newcommand{\R}{\mathbb{R}}
\newcommand{\T}{\mathbb{T}}
\newcommand{\C}{\mathbb{C}}
\newcommand{\Z}{\mathbb{Z}}
\newcommand{\eps}{\epsilon}
\numberwithin{equation}{section} 
\newtheorem{theorem}{Theorem}[section]
\newtheorem{lemma}[theorem]{Lemma}
\newtheorem{proposition}[theorem]{Proposition}
\newtheorem{remark}[theorem]{Remark}
\begin{document}

\title{Nonlinear Landau damping and wave operators in sharp Gevrey spaces}

\author{Alexandru D.\ Ionescu}
\address{Princeton University}
\email{aionescu@math.princeton.edu}
\author{Benoit Pausader}
\address{Brown University}
\email{benoit\_pausader@brown.edu}
\author{Xuecheng Wang}
\address{YMSC, Tsinghua University \& BIMSA}
\email{xuecheng@tsinghua.edu.cn}
\author{Klaus Widmayer}
\address{University of Zurich and University of Vienna}
\email{klaus.widmayer@math.uzh.ch}

\thanks{A.I.\ was supported in part by NSF grant DMS-2007008; B.P.\ was supported in part by NSF grant DMS-2154162; X.W.\  was supported in part by NSFC-12141102, and MOST-2020YFA0713003. K.W.\ gratefully acknowledges support of the SNSF through grant PCEFP2\_203059.}
\maketitle

\begin{abstract}
We prove nonlinear Landau damping in optimal weighted Gevrey-3 spaces for solutions of the confined Vlasov-Poisson system on $\T^d\times\R^d$ which are small perturbations of homogeneous Penrose-stable equilibria.

We also prove the existence of nonlinear scattering operators associated to the confined Vlasov-Poisson evolution, as well as suitable injectivity properties and Lipschitz estimates (also in weighted Gevrey-3 spaces) on these operators.

Our results give definitive answers to two well-known open problems in the field, both of them stated in the recent review of Bedrossian \cite[Section 6]{Be}. 
\end{abstract}

\maketitle

\setcounter{tocdepth}{1}

\tableofcontents

\section{Introduction} In this article we investigate asymptotic properties of solutions of the classical Vlasov-Poisson system. In the confined case the electron distribution is modeled by a function $F=F(t,x,v):I\times\mathbb{T}^d_x\times\mathbb{R}^d_v\to\R$, where $I\subseteq\R$ is an interval, $\T:=\R/(2\pi\Z)$, and $d\geq 1$, evolving according to the nondimensionalized Vlasov-Poisson system
\begin{equation}\label{VPO}
\begin{split}
\left(\partial_t+v\cdot\nabla_x\right)F+\nabla_x\phi\cdot\nabla_vF=0,\qquad \Delta_x\phi=\rho=\int_{\mathbb{R}^d}Fdv-1.
\end{split}
\end{equation}
Our goal is to investigate the global nonlinear stability properties (Landau damping) of solutions around a {\it{spatially homogeneous equilibrium}} $M_0=M_0(v)$ normalized such that 
\begin{equation}\label{VP00}
\int_{\R^d}M_0(v)\,dv =1.
\end{equation}
Looking for solutions of \eqref{VPO} of the form $F=M_0+f$, we obtain the system
\begin{equation}\label{NVP}
\begin{split}
&\left(\partial_t+v\cdot\nabla_x\right)f+E\cdot \nabla_vf+E\cdot\nabla_vM_0=0,\\
&E:=\nabla_x\Delta_x^{-1}\rho,\qquad\rho(t,x):=\int_{\R^d} f(t,x,v)\,dv.
\end{split}
\end{equation}

\subsection{The main theorems} The free transport equation $\partial_t f+v\cdot\nabla_x f=0$ exhibits phase mixing, which leads to time decay of the spatial density $\rho$. It was a fundamental observation of Landau \cite{Lan1946} (see also \cite[Chapter 3]{LLX1981}) that an interesting mechanism of decay exists also in the linearized Vlasov-Poisson equations near homogeneous equilibria satisfying certain conditions (nowadays called "Penrose criterion'' \cite{Pe}, see \eqref{eq:Penrose} below).   

The classical mechanism for nonlinear stability involves trading regularity for decay. In order to state our main theorems we need some definitions. For any $\lambda,s\in(0,1]$ we define the Gevrey spaces $\mathcal{G}^{\lambda,s}(\T^d\times\R^d)$ induced by the norms
\begin{equation}\label{GevNorms}
\|g\|_{\mathcal{G}^{\lambda,s}(\T^d\times\R^d)}:=\big\|\widehat{g}(k,\xi)e^{\lambda\langle k,\xi\rangle^s}\big\|_{L^2_{k,\xi}},
\end{equation}
where $\langle k,\xi\rangle:=\sqrt{1+|k|^2+|\xi|^2}$ and $\widehat{g}:\Z^d\times\R^d\to\C$ denotes the Fourier transform of the function $g$,
\begin{equation*}
\widehat{g}(k,\xi):=\int_{\T^d\times\R^d}g(x,v)e^{-ik\cdot x}e^{-i\xi\cdot v}\,dxdv.
\end{equation*}
For $n\in\Z_+$ we define also the weighted Gevrey spaces $\mathcal{G}^{\lambda,s}_n(\T^d\times\R^d)$ induced by the norms
\begin{equation}\label{GevNorms2}
\|g(x,v)\|_{\mathcal{G}^{\lambda,s}_n(\T^d\times\R^d)}:=\sum_{|a|\leq n}\|D^a_\xi\widehat{g}(k,\xi)e^{\lambda\langle k,\xi\rangle^s}\|_{\mathcal{G}^{\lambda,s}(\T^d\times\R^d)}.
\end{equation}
We also define the spaces $\mathcal{G}^{\lambda,s}(\T^d)$, $\mathcal{G}^{\lambda,s}(\R^d)$, and $\mathcal{G}^{\lambda,s}_n(\R^d)$ in a similar fashion. For simplicity of notation, we sometimes let $\mathcal{G}^{\lambda,s}_n:=\mathcal{G}^{\lambda,s}_n(\T^d\times\R^d)$.

We assume that there are constants $\lambda_0,\vartheta\in(0,1]$ such that the homogenous equilibrium $M_0$ satisfies the smoothness and decay bounds
\begin{equation}\label{M01}
\sup_{\xi\in\R^d}e^{\lambda_0\langle\xi\rangle^{1/3}}\big\{\big|\widehat{M_0}(\xi)\big|+|\xi|\,\big|\nabla_{\xi}\widehat{M}_0(\xi)\big|\big\}+\|\nabla_v M_0\|_{\mathcal{G}^{\lambda_0,1/3}_{d'}(\R^d)}\leq \vartheta^{-1},
\end{equation}
where $d':=\lfloor d/2+1\rfloor$, and the \textit{Penrose criterion}
\begin{equation}\label{eq:Penrose}
\inf_{k\in\Z^d\setminus\{0\},\,\tau\in\C,\,\Im(\tau)\leq 0}\Big|1+\int_0^\infty e^{-i\tau s}s\widehat{M_0}(sk)\,ds\Big|\geq\vartheta>0.
\end{equation}

To state our main results it is convenient to define the kinetic profile 
\begin{equation}\label{NVP2}
g(t,x,v):=f(t,x+tv,v).
\end{equation}
In terms of the profile $g$, the Vlasov-Poisson system \eqref{NVP} becomes
\begin{equation}\label{NVP3}
\begin{split}
&\partial_tg(t,x,v)+E(t,x+tv)\cdot\nabla_vM_0(v)+E(t,x+tv)\cdot(\nabla_v-t\nabla_x)g(t,x,v)=0,\\
&E:=\nabla_x\Delta_x^{-1}\rho,\qquad\rho(t,x):=\int_{\R^d} g(t,x-tv,v)\,dv.
\end{split}
\end{equation}

Our first main theorem concerns the global nonlinear stability of solutions of the Vlasov-Poisson system \eqref{NVP}.

\begin{theorem}\label{MainThm}
(Nonlinear Landau damping) Assume $d\geq 1$, $\lambda_0,\vartheta\in(0,1]$, and $M_0$ is a homogeneous equilibrium satisfying the bounds \eqref{M01}--\eqref{eq:Penrose}. 

(i) There is a constant $\overline{\kappa}=\overline{\kappa}(d,\lambda_0,\vartheta)>0$ with the property that if $f_0$ is a Gevrey-smooth, small, and neutral perturbation satisfying
\begin{equation}\label{MainTHM1}
\|f_0\|_{\mathcal{G}_{d'}^{\lambda_0,1/3}(\T^d\times\R^d)}\leq\kappa_0\leq\overline{\kappa},\qquad \int_{\T^d\times\R^d} f_0(x,v)\,dxdv=0
\end{equation}
then there is a unique global solution $g\in C([0,\infty):\mathcal{G}_{d'}^{3\lambda_0/4,1/3})$ of the equation \eqref{NVP} with initial data $f(0)=f_0$. Moreover $\int_{\T^d\times\R^d} f(t,x,v)\,dxdv=0$ for any $t\in[0,\infty)$, and there is a function $g_\infty\in\mathcal{G}_{d'}^{3\lambda_0/4,1/3}$ (the final state) such that
\begin{equation}\label{MainTHM2}
\|\rho(t)\|_{\mathcal{G}^{\lambda_0/2,1/3}(\T^d)}+\|g(t)-g_\infty\|_{\mathcal{G}^{\lambda_0/2,1/3}_{d'}(\T^d\times\R^d)}\leq A\kappa_0e^{-\lambda_0\langle t\rangle^{1/3}/4}
\end{equation}
for any $t\in[0,\infty)$, where $A=A(d,\lambda_0,\vartheta)\in[1,\overline{\kappa}^{-1}]$ is a constant, $\rho$ denotes the macroscopic density defined in \eqref{NVP}, and $g$ denotes the profile defined in \eqref{NVP2}. 

(ii) The solution map $f_0\to g$ is a continuous map from $B_{\overline{\kappa}}(\mathcal{G}_{d'}^{\lambda_0,1/3})$ to $C\big([0,\infty):\mathcal{G}_{d'}^{3\lambda_0/4,1/3}\big)$. In fact, for any $f_{01},f_{02}\in B_{\overline{\kappa}}(\mathcal{G}_{d'}^{\lambda_0,1/3})$ and $t\in[0,\infty)$ we have the Lipschitz estimates
\begin{equation}\label{MainTHM4}
A^{-1}\|f_{01}-f_{02}\|_{\mathcal{G}_{d'}^{\lambda_0/2,1/3}}\leq \|(g_1-g_2)(t)\|_{\mathcal{G}_{d'}^{3\lambda_0/4,1/3}}\leq A\|f_{01}-f_{02}\|_{\mathcal{G}_{d'}^{\lambda_0,1/3}}.
\end{equation}
In particular, the final state operator $\mathcal{S}_+(f_0):=g_\infty$ is a Lipschitz continuous and injective map from $B_{\overline{\kappa}}(\mathcal{G}_{d'}^{\lambda_0,1/3})$ to $\mathcal{G}_{d'}^{3\lambda_0/4,1/3}$. 
\end{theorem}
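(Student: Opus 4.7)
The plan is a continuity/bootstrap argument on the profile $g$ defined in \eqref{NVP2}, which removes the free transport and reduces the density to the trace $\widehat{\rho}(t,k) = \widehat{g}(t,k,tk)$ on the resonant set $\{\xi = tk\}$. I would work with a family of time-dependent weighted Gevrey-3 norms
\begin{equation*}
\|g(t)\|_{X_{\lambda(t)}} := \sum_{|a|\leq d'} \|D^a_\xi \widehat{g}(t,k,\xi)\, e^{\lambda(t) \langle k,\xi\rangle^{1/3}}\|_{L^2_{k,\xi}},
\end{equation*}
where $\lambda(t)$ decreases monotonically from $\lambda_0$ at $t=0$ down to $\lambda_\infty = 3\lambda_0/4$, augmented by a bilinear echo multiplier that penalizes pairs of frequencies $(\ell,\eta),(k-\ell,\xi-\eta)$ lying simultaneously near their respective resonant sets, in the spirit of the sharp Bedrossian--Masmoudi analysis for 2D Euler. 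The weighted derivatives $D^a_\xi$ ensure traces on $\{\xi = tk\}$ are controlled in $\ell^2_k$ by Sobolev embedding.

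From \eqref{NVP3} one derives, by integration along the resonant trace, the Volterra equation
\begin{equation*}
\widehat{\rho}(t,k) + \int_0^t (t-s)\, \widehat{M_0}((t-s)k)\,\widehat{\rho}(s,k)\,ds = \widehat{f_0}(k, tk) + \mathcal{N}(t,k),
\end{equation*}
where $\mathcal{N}(t,k)$ collects the nonlinear source from $E\cdot(\nabla_v - t\nabla_x)g$ restricted to $\xi = tk$. Under Penrose stability \eqref{eq:Penrose}, the associated Volterra resolvent is bounded in Gevrey-3, yielding for any $\lambda_1 < \lambda_0$ the linear decay estimate
\begin{equation*}
\|\widehat{\rho}(t,k)\, e^{\lambda_1\langle t,k\rangle^{1/3}}\|_{L^2_{t,k}} \lesssim \|\widehat{f_0}(k,tk)\, e^{\lambda_1\langle t,k\rangle^{1/3}}\|_{L^2_{t,k}} + \|\mathcal{N}\|_{L^2_{t,k}(e^{\lambda_1 \langle \cdot\rangle^{1/3}})},
\end{equation*}
the initial-data term being controlled by $\|f_0\|_{\mathcal{G}^{\lambda_0,1/3}_{d'}}$. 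I would then bootstrap
\begin{equation*}
\mathcal{M}(T) := \sup_{t\in[0,T]}\|g(t)\|_{X_{\lambda(t)}} + \|\widehat{\rho}(t,k)\, e^{\lambda_1\langle t,k\rangle^{1/3}}\|_{L^2([0,T]\times\Z^d)},
\end{equation*}
improving $\mathcal{M}(T)\leq 2A\kappa_0$ to $\mathcal{M}(T)\leq A\kappa_0$ for $A=A(d,\lambda_0,\vartheta)$.

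The main obstacle is the nonlinearity $E\cdot(\nabla_v - t\nabla_x)g$, which in Fourier reads
\begin{equation*}
\widehat{E\cdot(\nabla_v - t\nabla_x)g}(t,k,\xi) = \sum_{\ell\in\Z^d\setminus\{0\}} \frac{\ell\cdot(\xi - tk)}{|\ell|^2}\, \widehat{\rho}(t,\ell)\, \widehat{g}(t,k-\ell,\xi - t\ell).
\end{equation*}
The dangerous plasma-echo cascades occur when $\widehat{\rho}(t,\ell)$ concentrates near a resonance time and deposits frequency mass into a $g$-mode that later re-emerges as $\widehat{\rho}(t,k)$ at a subsequent resonance; iterating such interactions can produce frequency inflation that is tolerable in Gevrey-$s$ only for $s \leq 3$. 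The symbol $\ell\cdot(\xi - tk)/|\ell|^2$ gains extra smallness transverse to $\{\xi = tk\}$, and combined with the subadditivity of the Gevrey-3 weight across the frequency decomposition, the Volterra-based decay of $\widehat{\rho}$, and the echo multiplier, the bilinear estimate can be closed precisely at the sharp exponent $1/3$. This is the heart of the argument and the step most sensitive to the Gevrey index. Once the bootstrap closes, $\partial_t g$ is time-integrable in the terminal norm, yielding scattering to a limit $g_\infty\in\mathcal{G}^{3\lambda_0/4, 1/3}_{d'}$; re-inserting the estimate on $g(t)-g_\infty$ delivers the decay rate in \eqref{MainTHM2}.

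For part (ii), the difference $h := g_1 - g_2$ satisfies a linear equation with coefficients built from $(g_1,g_2)$ and source linear in $h$; applying the same bootstrap machinery to $h$ in a slightly smaller Gevrey radius yields the upper Lipschitz bound in \eqref{MainTHM4}. The lower bound is trivial at $t=0$ via $g_i(0)=f_{0i}$ and the embedding $\mathcal{G}^{3\lambda_0/4,1/3}_{d'} \hookrightarrow \mathcal{G}^{\lambda_0/2,1/3}_{d'}$, and extends to all $t>0$ by running the difference bootstrap \emph{backwards in time}: the $h$-equation is time-reversible and the uniform smallness of its coefficients yields symmetric estimates. Injectivity of $\mathcal{S}_+$ then follows at once: if $\mathcal{S}_+(f_{01}) = \mathcal{S}_+(f_{02})$ then $\|(g_1-g_2)(t)\|_{\mathcal{G}^{3\lambda_0/4,1/3}_{d'}} \to 0$ as $t\to\infty$, and the lower bound forces $f_{01}=f_{02}$.
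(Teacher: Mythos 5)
Your overall architecture matches the paper's: pass to the profile $g$, read the density off the resonant trace $\xi=tk$, invert the Volterra equation via Penrose stability, and run a two-tier bootstrap (weighted Gevrey energy plus a density norm), with the lower Lipschitz bound obtained by a backward-in-time difference estimate in a weaker Gevrey radius. However, there is a genuine gap at the single point where the theorem is actually hard. You take as your base weight the product-structure $e^{\lambda(t)\langle k,\xi\rangle^{1/3}}$ and defer the critical step to an unspecified ``bilinear echo multiplier \ldots in the spirit of Bedrossian--Masmoudi for 2D Euler.'' Product weights $\lambda(t)\langle r\rangle^{1/3}$ are exactly what \cite{BMM2016} and \cite{GrNgRo} use, and they close the echo estimate only for Gevrey index $s>1/3$; at the endpoint $s=1/3$ the resonant contribution $\sum_{|a|\le|k|/8}\int_{0.99t}^{t}|t-s|\,\frac{A(t,k,tk)}{A(s,k,tk)}e^{-\delta(|a|^{1/3}+|sa+(t-s)k|^{1/3})}ds$ is not uniformly bounded unless the ratio $A(t,k,tk)/A(s,k,tk)$ supplies a gain of order $e^{-c|t-s|t^{-1/3}|k|^{-1/3}}$ for $s$ close to $t$, which a purely time-differentiated product weight does not give (it only yields the weaker \eqref{pam8.5}, useless when $s\in[0.99t,t]$ and $|k|$ is large). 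The paper's entire contribution at this step is the extra non-product term $\delta\big(1+t\langle r\rangle^{-2/3}\big)^{-\delta}\langle r\rangle^{1/3}$ in \eqref{gu1}, whose $t$-derivative produces precisely the needed gain \eqref{pam17} and hence \eqref{pam16}, while still satisfying the bilinear and commutator bounds \eqref{gu6}, \eqref{gu6.5}. Your proposal names the obstruction correctly but does not construct the object that overcomes it, so the bilinear estimate ``can be closed precisely at the sharp exponent'' is asserted, not proved.

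Two secondary points. First, the boundedness of the Volterra resolvent in Gevrey-3 is itself nontrivial here because $M_0$ is only assumed Gevrey-3 (not analytic, unlike all prior works): one needs the pointwise Green's function bound $|\widehat{G}(s,k)|\lesssim e^{-0.95\lambda_0|sk|^{1/3}}$, which the paper derives in Section 6 from quantitative derivative bounds on $L'(\tau,k)$ via Fa\'a di Bruno; your proposal treats this as automatic. Second, for the backward estimate you appeal to time-reversibility and ``symmetric estimates,'' but the backward problem is not symmetric: it requires a separate family of weights \emph{increasing} in $t$ (the paper's $A^\sharp$), a different density representation expressing $\rho(t)$ in terms of future times, and a separate verification of the resonant integral over $s\in[t,1.01t]$; you correctly anticipate the loss of Gevrey radius in the lower bound, but the mechanism needs to be built, not inferred from reversibility.
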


Our second main theorem concerns the solvability of the final state problem, and the construction of injective wave operators satisfying Lipschitz bounds.

\begin{theorem}\label{MainThm22}
(The final state problem) Assume $d$, $\lambda_0$, $\vartheta$, and $M_0$ are as in Theorem \ref{MainThm}. Then there is a constant $\overline{\kappa}'=\overline{\kappa}'(d,\lambda_0,\vartheta)>0$ with the property that if $g_\infty$ satisfies
\begin{equation}\label{MainTHMXY1}
\|g_\infty\|_{\mathcal{G}_{d'}^{\lambda_0,1/3}}\leq\overline{\kappa}',\qquad \int_{\T^d\times\R^d} g_\infty(x,v)\,dxdv=0
\end{equation}
then there is a unique solution $g\in C([0,\infty):\mathcal{G}_{d'}^{3\lambda_0/4,1/3})$ of the system \eqref{NVP3} satisfying 
\begin{equation}\label{MainTHNXY2}
\lim_{t\to\infty}\|g(t)-g_\infty\|_{\mathcal{G}_{d'}^{3\lambda_0/4,1/3}}=0,\qquad \int_{\T^d\times\R^d} g(t,x,v)\,dxdv=0.
\end{equation}
Moreover, for any $g_{\infty1},g_{\infty2}\in B_{\overline{\kappa}'}(\mathcal{G}_{d'}^{\lambda_0,1/3})$ and $t\in[0,\infty)$ we have the bi-Lipschitz estimates
\begin{equation}\label{MainTHMXY4}
(A')^{-1}\|g_{\infty1}-g_{\infty2}\|_{\mathcal{G}_{d'}^{\lambda_0/2,1/3}}\leq \|(g_1-g_2)(t)\|_{\mathcal{G}_{d'}^{3\lambda_0/4,1/3}}\leq A'\|g_{\infty1}-g_{\infty2}\|_{\mathcal{G}_{d'}^{\lambda_0,1/3}},
\end{equation}
for some constant $A'=A'(d,\lambda_0,\vartheta)\in[1,1/\overline{\kappa}']$. In particular, the wave operator $\mathcal{W}_+(g_\infty):=g(0)$ is a Lipschitz continuous and injective map from $B_{\overline{\kappa}'}(\mathcal{G}_{d'}^{\lambda_0,1/3})$ to $\mathcal{G}_{d'}^{3\lambda_0/4,1/3}$. 
\end{theorem}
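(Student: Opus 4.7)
The plan is to construct the solution by an approximation procedure in the spirit of scattering theory: solve finite-time versions of \eqref{NVP3} with terminal data $g_\infty$ prescribed at increasing times $T_N\to\infty$, establish Gevrey bounds uniform in $N$, and pass to the limit. Concretely, for each $N\geq 1$ I let $g^{(N)}\in C([0,N]:\mathcal{G}_{d'}^{\lambda_0,1/3})$ denote the unique backward-in-time solution of \eqref{NVP3} on $[0,N]$ with $g^{(N)}(N)=g_\infty$; on a bounded time interval the equation is reversible with no loss of regularity, so existence is standard by Picard iteration in Gevrey spaces, and neutrality $\int g^{(N)}(t)\,dxdv=0$ is preserved by the Vlasov flow.

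The core of the proof is the uniform estimate
\begin{equation*}
\sup_{t\in[0,N]}\big\{\|g^{(N)}(t)\|_{\mathcal{G}_{d'}^{3\lambda_0/4,1/3}}+e^{\lambda_0\langle t\rangle^{1/3}/4}\|\rho^{(N)}(t)\|_{\mathcal{G}^{\lambda_0/2,1/3}}\big\}\lesssim\|g_\infty\|_{\mathcal{G}_{d'}^{\lambda_0,1/3}},
\end{equation*}
obtained by re-running the Gevrey bootstrap of Theorem \ref{MainThm} with the energy inequality integrated from $s=N$ down to $s=t$ instead of from $s=0$ up to $s=t$. The Penrose criterion \eqref{eq:Penrose} plays the same role as in the Cauchy problem: it inverts the linearized reaction operator and closes the bootstrap, while the exponential decay of $\rho^{(N)}$ follows from the Gevrey regularity of $g^{(N)}$ via the kinematic identity $\widehat{\rho^{(N)}}(t,k)=\widehat{g^{(N)}}(t,k,tk)$. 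Given these bounds, a standard compactness and diagonal extraction argument produces a limit $g\in C([0,\infty):\mathcal{G}_{d'}^{3\lambda_0/4,1/3})$ solving \eqref{NVP3}, and the asymptotic condition $g(t)\to g_\infty$ follows by integrating \eqref{NVP3} on $[t,\infty)$ and using the integrable decay of $E(s)$ inherited from the $\rho$-estimate. Uniqueness in this class is a standard energy estimate on differences.

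The upper bi-Lipschitz bound in \eqref{MainTHMXY4} is obtained by running the same energy scheme on the difference $g_1-g_2$, which satisfies a linear equation with coefficients controlled by the Gevrey norms of $g_1$ and $g_2$. The lower bi-Lipschitz bound is inherited from Theorem \ref{MainThm}(ii): taking $f_{0i}:=g_i(0)$, the forward estimate \eqref{MainTHM4} gives $A^{-1}\|g_1(0)-g_2(0)\|_{\mathcal{G}_{d'}^{\lambda_0/2,1/3}}\leq\|g_1(t)-g_2(t)\|_{\mathcal{G}_{d'}^{3\lambda_0/4,1/3}}$ for every $t$, and passing $t\to\infty$ (using $g_i(t)\to g_{\infty i}$ in $\mathcal{G}_{d'}^{3\lambda_0/4,1/3}$) bounds $\|g_1(0)-g_2(0)\|_{\mathcal{G}_{d'}^{\lambda_0/2,1/3}}$ by $\|g_{\infty 1}-g_{\infty 2}\|_{\mathcal{G}_{d'}^{3\lambda_0/4,1/3}}$; combining this with the upper bound, which controls differences at any time by differences at time $0$, yields the stated inequality for general $t$. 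Injectivity of $\mathcal{W}_+$ is then immediate from the lower Lipschitz estimate.

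The main obstacle is the backward version of the bootstrap. Naively the Penrose criterion is a forward-time stability criterion, and the linearized flow does admit genuine instabilities when reversed; the reconciliation is that the damping mechanism exploited in the nonlinear scheme is not a spectral property of the linearization but the phase-mixing decay of $\rho$ which follows directly from the Gevrey regularity of $g$, and this is insensitive to the orientation of time. Verifying that every ingredient of the Theorem \ref{MainThm} bootstrap — the frequency-localized energy estimates, the paraproduct decompositions in $(x,v)$, the cancellations against the reaction term, and the handling of the exponential Gevrey weight with its time-dependent radius of regularity — remains compatible with integration from $s=N$ down to $s=t$ is the substantive technical task, and care must be taken with sign conventions to avoid spoiling the closure.
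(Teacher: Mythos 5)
Your overall architecture coincides with the paper's: approximate by terminal-value problems $g_n(n)=g_\infty$ on $[0,n]$, prove uniform Gevrey bounds by a backward bootstrap, pass to the limit, and obtain the two halves of \eqref{MainTHMXY4} from forward and backward stability estimates respectively. But the step you defer as ``the substantive technical task'' --- making the bootstrap close when the energy inequality is integrated from $s=N$ down to $s=t$ --- is precisely where the new idea lives, and without it the proposal has a genuine gap. If you keep the weights $A(t,k,\xi)$ of \eqref{gu1}, which are decreasing in $t$, then the weight-derivative term $I_i$ in \eqref{ner2} is favorable only under forward integration; integrated from $N$ down to $t$ it changes sign and becomes a loss, while it is exactly this term (with the correct sign) that must absorb the transport term $III_i$. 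The paper's resolution is a second family of weights $A^\sharp$ in \eqref{infin3}, \emph{increasing} in time (the solution loses Gevrey radius as $t$ decreases), for which the backward integration produces a good term, together with a new representation \eqref{infin2} of $\widehat{\rho}(t,k)$ as a Volterra equation over $[t,T_2]$ inverted with the reflected Green's function $\widehat{G}(s-t,-k)$, replacing \eqref{pam27.1}. Your remark that the decay of $\rho^{(N)}$ ``follows from the kinematic identity'' is only an a posteriori observation; closing the bootstrap requires this backward Green's-function machinery. Relatedly, the finite-interval solutions cannot carry uniform-in-$N$ bounds ``with no loss of regularity'': the loss from $\mathcal{G}^{\lambda_0,1/3}_{d'}$ to $\mathcal{G}^{3\lambda_0/4,1/3}_{d'}$ as $t$ runs from $n$ down to $0$ is intrinsic, and the convergence of the approximants should be established as a Cauchy-sequence estimate via the backward difference bounds (Proposition \ref{MainBootstrapInfin}) rather than by compactness and diagonal extraction, which by itself yields only subsequential limits and does not show that the limit attains $g_\infty$ in the strong norm.

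Separately, your derivation of the lower bound in \eqref{MainTHMXY4} does not produce the stated inequality. From \eqref{MainTHM4} with $f_{0i}=g_i(0)$ you obtain $\|g_1(0)-g_2(0)\|\lesssim\|(g_1-g_2)(t)\|$ and, letting $t\to\infty$, $\|g_1(0)-g_2(0)\|\lesssim\|g_{\infty1}-g_{\infty2}\|$; neither of these, nor their combination with the upper bound of \eqref{MainTHMXY4}, bounds $\|g_{\infty1}-g_{\infty2}\|$ from above by $\|(g_1-g_2)(t)\|$. What is needed is the forward Lipschitz (upper) half of Proposition \ref{MainBootstrap} restarted at time $t$, giving $\|(g_1-g_2)(t')\|\lesssim\|(g_1-g_2)(t)\|$ for $t'\geq t$, followed by $t'\to\infty$. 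One must also track the Gevrey indices when doing so, since $g_i(t)$ lies only in $\mathcal{G}^{3\lambda_0/4,1/3}_{d'}$ and not in $\mathcal{G}^{\lambda_0,1/3}_{d'}$; the paper handles this by running the bootstrap propositions with an adjustable parameter $\lambda_1\in[0.5\lambda_0,0.9\lambda_0]$.
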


Finally, we can combine Theorems \ref{MainThm} and \ref{MainThm22} to prove the existence of nonlinear scattering operators associated to the confined Vlasov-Poisson system. More precisely:

\begin{theorem}\label{MainThm33}
(The scattering operator) Assume $d$, $\lambda_0$, $\vartheta$, and $M_0$ are as in Theorem \ref{MainThm}. Then there is a constant $\overline{\kappa}''=\overline{\kappa}''(d,\lambda_0,\vartheta)>0$ with the property that if $g_{-\infty}$ satisfies
\begin{equation}\label{MainTH1}
\|g_{-\infty}\|_{\mathcal{G}_{d'}^{\lambda_0,1/3}}\leq\overline{\kappa}'',\qquad \int_{\T^d\times\R^d} g_{-\infty}(x,v)\,dxdv=0
\end{equation}
then there is a unique neutral solution $g\in C(\R:\mathcal{G}_{d'}^{\lambda_0/2,1/3})$ of the system \eqref{NVP3} satisfying 
\begin{equation}\label{MainTH2}
\lim_{t\to-\infty}g(t)=g_{-\infty}\text{ in }\mathcal{G}_{d'}^{\lambda_0/2,1/3}\qquad\text{ and }\qquad\lim_{t\to\infty}g(t)=g_{\infty}\text{ in }\mathcal{G}_{d'}^{\lambda_0/2,1/3}.
\end{equation}
Moreover the scattering operator $\mathcal{S}(g_{-\infty}):=g_\infty$ is a Lipschitz continuous and injective map from $B_{\overline{\kappa}''}(\mathcal{G}_{d'}^{\lambda_0,1/3})$ to $\mathcal{G}_{d'}^{\lambda_0/2,1/3}$. In fact, for any $g_{-\infty1},g_{-\infty2}\in B_{\overline{\kappa}''}(\mathcal{G}_{d'}^{\lambda_0,1/3})$ we have
\begin{equation}\label{MainTH4}
(A'')^{-1}\|g_{-\infty1}-g_{-\infty2}\|_{\mathcal{G}_{d'}^{\lambda_0/4,1/3}}\leq \|g_{\infty1}-g_{\infty2}\|_{\mathcal{G}_{d'}^{\lambda_0/2,1/3}}\leq A''\|g_{-\infty1}-g_{-\infty2}\|_{\mathcal{G}_{d'}^{\lambda_0,1/3}},
\end{equation}
for some constant $A''=A''(d,\lambda_0,\vartheta)\in[1,1/\overline{\kappa}'']$. 
\end{theorem}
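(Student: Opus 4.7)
The plan is to concatenate the backward and forward solutions provided by Theorems \ref{MainThm22} and \ref{MainThm} at the slice $t=0$, using the time-reversibility of the Vlasov--Poisson system. A direct check shows that if $g(t,x,v)$ solves \eqref{NVP3} with equilibrium $M_0$, then $\widetilde{g}(t,x,v):=g(-t,x,-v)$ solves \eqref{NVP3} with equilibrium $\widetilde{M}_0(v):=M_0(-v)$; since $|\widehat{\widetilde{M}_0}(\xi)|=|\widehat{M_0}(-\xi)|$ and the Penrose infimum in \eqref{eq:Penrose} is unchanged by $k\mapsto -k$, $\widetilde{M}_0$ satisfies exactly the same hypotheses \eqref{M01}--\eqref{eq:Penrose}. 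Applying Theorem \ref{MainThm22} to $\widetilde{M}_0$ with data $g_{-\infty}$ of size $\leq\overline{\kappa}''\leq\overline{\kappa}'$, and unwinding the time-reversal, produces a unique solution $g\in C((-\infty,0]:\mathcal{G}_{d'}^{3\lambda_0/4,1/3})$ of \eqref{NVP3} with $\lim_{t\to-\infty}g(t)=g_{-\infty}$ in $\mathcal{G}_{d'}^{3\lambda_0/4,1/3}$, and in particular an initial profile $g(0)$ satisfying $\|g(0)\|_{\mathcal{G}_{d'}^{3\lambda_0/4,1/3}}\leq A'\overline{\kappa}''$ and the neutrality condition.

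We then propagate $g(0)$ forward by Theorem \ref{MainThm}, applied with the Gevrey parameter $\lambda_0$ replaced by $\lambda_0^\sharp:=3\lambda_0/4$ (the hypotheses on $M_0$ are monotone: decreasing $\lambda_0$ only strengthens them). Provided $\overline{\kappa}''$ is small enough that $A'\overline{\kappa}''\leq\overline{\kappa}(d,\lambda_0^\sharp,\vartheta)$, this yields a unique global solution on $[0,\infty)$ bounded in $\mathcal{G}_{d'}^{9\lambda_0/16,1/3}$ and converging to a final state $g_\infty$ exponentially fast in $\mathcal{G}_{d'}^{3\lambda_0/8,1/3}$. Since $\lambda_0/2$ lies strictly between $3\lambda_0/8$ and $9\lambda_0/16$, H\"older applied directly to the weighted $L^2$ definition \eqref{GevNorms}--\eqref{GevNorms2} produces the log-convexity interpolation
\begin{equation*}
\|u\|_{\mathcal{G}_{d'}^{\lambda,1/3}}\leq\|u\|_{\mathcal{G}_{d'}^{\lambda_1,1/3}}^{\theta}\|u\|_{\mathcal{G}_{d'}^{\lambda_2,1/3}}^{1-\theta},\qquad \lambda=\theta\lambda_1+(1-\theta)\lambda_2,
\end{equation*}
which upgrades the convergence to the required $\mathcal{G}_{d'}^{\lambda_0/2,1/3}$ with only a slight reduction in rate. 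Gluing the two pieces at $t=0$ gives the claimed global solution, and its uniqueness in $C(\R:\mathcal{G}_{d'}^{\lambda_0/2,1/3})$ is inherited from the uniqueness clauses of Theorems \ref{MainThm22} and \ref{MainThm} applied on $(-\infty,0]$ and $[0,\infty)$, respectively, together with neutrality which is preserved by \eqref{NVP3}.

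The Lipschitz and injectivity properties in \eqref{MainTH4} follow by composing the bi-Lipschitz estimates \eqref{MainTHM4} and \eqref{MainTHMXY4}. The upper bound is obtained directly by chaining the two upper Lipschitz estimates, combined with Fatou-type lower semi-continuity of the Gevrey norms to pass to the limit $t\to\infty$ in the bound for $(g_1-g_2)(t)$. For the lower bound, we chain the lower Lipschitz bound of Theorem \ref{MainThm} (with parameter $\lambda_0^\sharp=3\lambda_0/4$, again passing to the limit $t\to\infty$ by lower semi-continuity of the $\mathcal{G}_{d'}^{9\lambda_0/16,1/3}$ norm along convergence in $\mathcal{G}_{d'}^{3\lambda_0/8,1/3}$) with the lower Lipschitz bound of Theorem \ref{MainThm22} applied at a suitably reduced Gevrey parameter chosen so that the two indices match up; the factor $\lambda_0/4$ (rather than $\lambda_0/2$) appearing on the $g_{-\infty}$ side of \eqref{MainTH4} is precisely the cumulative halving of the Gevrey index incurred by composing the two bi-Lipschitz estimates. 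Injectivity of $\mathcal{S}$ is then immediate from the lower bound. The main obstacle here is purely one of bookkeeping---choosing $\overline{\kappa}''$ and the intermediate Gevrey parameters so that the two theorems' balls of applicability, and their bi-Lipschitz estimates, compose consistently---but no new analytic input is required beyond Theorems \ref{MainThm}, \ref{MainThm22} and the standard Gevrey interpolation.
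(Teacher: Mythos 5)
Your overall strategy coincides with the paper's: reverse time, solve the backward half with Theorem \ref{MainThm22}, propagate forward with Theorem \ref{MainThm} at a reduced Gevrey parameter, and glue at $t=0$. (The paper uses the reflection $g(t,x,v)\mapsto g(-t,-x,v)$, which preserves $M_0$ itself, while you use $g(t,x,v)\mapsto g(-t,x,-v)$ and pass to $M_0(-v)$; both are valid since \eqref{M01}--\eqref{eq:Penrose} are invariant under $\xi\mapsto-\xi$, $k\mapsto-k$.) The existence, uniqueness, and the upper bound in \eqref{MainTH4} are handled correctly.

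There is, however, a gap in your argument for the \emph{lower} bound in \eqref{MainTH4}. You propose to start from the lower Lipschitz estimate of Theorem \ref{MainThm} at parameter $3\lambda_0/4$, namely $A^{-1}\|(g_1-g_2)(0)\|_{\mathcal{G}^{3\lambda_0/8,1/3}_{d'}}\leq\|(g_1-g_2)(t)\|_{\mathcal{G}^{9\lambda_0/16,1/3}_{d'}}$, and to ``pass to the limit $t\to\infty$ by lower semi-continuity.'' Fatou-type lower semicontinuity gives $\|g_{\infty1}-g_{\infty2}\|_{\mathcal{G}^{9\lambda_0/16,1/3}_{d'}}\leq\liminf_t\|(g_1-g_2)(t)\|_{\mathcal{G}^{9\lambda_0/16,1/3}_{d'}}$, which bounds the right-hand side from \emph{below}; to replace $\|(g_1-g_2)(t)\|$ by a norm of $g_{\infty1}-g_{\infty2}$ you would need the reverse inequality (upper semicontinuity, i.e.\ norm convergence in $\mathcal{G}^{9\lambda_0/16,1/3}_{d'}$), which you do not have --- you only have convergence in the weaker space $\mathcal{G}^{3\lambda_0/8,1/3}_{d'}$ together with uniform boundedness at $9\lambda_0/16$. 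Moreover, even if this step were granted, it would produce $\|g_{\infty1}-g_{\infty2}\|$ measured in $\mathcal{G}^{9\lambda_0/16,1/3}_{d'}$, a \emph{stronger} norm than the $\mathcal{G}^{\lambda_0/2,1/3}_{d'}$ norm required on the right-hand side of the first inequality in \eqref{MainTH4}, so the claimed estimate would not follow. The correct chaining runs in the opposite direction and uses the two \emph{upper} Lipschitz estimates: first apply \eqref{MainTHMXY4} (the wave operator) at parameter $\lambda_0/2$ to get $\|(g_1-g_2)(0)\|_{\mathcal{G}^{3\lambda_0/8,1/3}_{d'}}\leq A'\|g_{\infty1}-g_{\infty2}\|_{\mathcal{G}^{\lambda_0/2,1/3}_{d'}}$ (the solutions of the final-state problem with data $g_{\infty i}$ agree with $g_i$ on $[0,\infty)$ by the uniqueness clause); then apply the upper bound in \eqref{MainTHM4} to the time-reversed system at parameter $3\lambda_0/8$ and let $t\to\infty$, where lower semicontinuity is now applied to the limit $g_{-\infty1}-g_{-\infty2}$ --- the correct direction --- yielding $\|g_{-\infty1}-g_{-\infty2}\|_{\mathcal{G}^{9\lambda_0/32,1/3}_{d'}}\leq AA'\|g_{\infty1}-g_{\infty2}\|_{\mathcal{G}^{\lambda_0/2,1/3}_{d'}}$. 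Since $9\lambda_0/32>\lambda_0/4$, this gives \eqref{MainTH4}. With this correction the proof is complete.
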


We conclude this subsection with some remarks:
\smallskip

\begin{remark} Our main theorems provide definitive answers to two of the open problems stated in the recent review article of Bedrossian \cite[Section 6]{Be}: 

(a) nonlinear Landau damping for initial perturbations in sharp weighted Gevrey-3 spaces, which was already conjectured by Mouhot-Villani \cite[Section 13]{MouVil};

(b) injectivity and Lipschitz bounds on the scattering operator $g_{-\infty}\mapsto g_\infty$ (and similar properties for the final state operator $f_0\mapsto g_\infty$ and the wave operator $g_\infty\mapsto g(0)$).

\end{remark}

\begin{remark} We also allow a larger class of equilibria $M_0$ than in the earlier work, in the same weighted Gevrey-3 space as the initial perturbation. For comparison, all the earlier proofs of nonlinear Landau damping in the confined case (in \cite{MouVil,BMM2016,GrNgRo}) require analytic equilibria. Our assumptions \eqref{M01} also allow certain slowly decreasing equilibria, like the Poisson equilibrium corresponding to $\widehat{M_0}(\xi)=e^{-|\xi|}$ in all dimensions $d\geq 1$, which were not included in the earlier nonlinear theorems. The main point is that we can still prove sufficiently strong bounds on the associated Green's function (as stated in \eqref{pam27.2} below) under the weaker and more natural assumptions \eqref{M01} on $M_0$. We provide all the details in section \ref{ProofGreen}.

We notice also that we could allow non-neutral perturbations in all the theorems, by modifying suitably the homogeneous equilibrium $M_0$ around which we perturb. 
\end{remark}

\begin{remark} We note that there is exponential loss of derivative during the nonlinear flow. For example, in Theorem \ref{MainThm} the initial data is assumed to be in the stronger Gevrey space $\mathcal{G}_{d'}^{\lambda_0,1/3}$, while the bounds on the solution are stated in the weaker Gevrey space $\mathcal{G}_{d'}^{\lambda_0/2,1/3}$. The precise amount of derivative loss can be reduced (for example from $\mathcal{G}_{d'}^{\lambda_0,1/3}$ to $\mathcal{G}_{d'}^{0.99\lambda_0,1/3}$, with small adjustments in the proof), but some loss is necessary to get time decay of macroscopic quantities. This is an important conceptual feature of Landau damping.
\end{remark}

\begin{remark} The global well-posedness theory of the final state problem and the construction of scattering operators (Theorems \ref{MainThm22} and \ref{MainThm33}) are new in the context of asymptotic stability of the Vlasov-Poisson system near homogeneous equilibria. To summarize, we prove that 3 nonlinear operators, the final state operator $f_0\mapsto g_\infty$, the wave operator $g_\infty\mapsto g(0)$, and the scattering operator $g_{-\infty}\mapsto g_\infty$ are well defined, Lipschitz continuous (between suitable spaces, with derivative loss, see \eqref{MainTHM4}, \eqref{MainTHMXY4}, \eqref{MainTH4}) and injective (in the quantitative sense given by the lower bounds in these inequalities).
\end{remark}

\subsection{Linear and nonlinear Landau damping}\label{LinNonlin} Landau damping is one of the most fundamental phenomena in Plasma Physics, and goes back to foundational work of Landau \cite{Lan1946} in 1946 on the linearized Vlasov-Poisson equations on $\R^3\times\R^3$ near Maxwellian equilibria. Nowadays, Landau damping is a term that is broadly used to denote a mechanism of decay of certain averaged quantities, as well as asymptotic stability. It took more than 60 years to develop a first understanding of Landau damping at the nonlinear level, for the full Vlasov-Poisson system \eqref{NVP}. This concerns the {\it{confined}} periodic setting $x\in\mathbb{T}^d$, where the decay of macroscopic quantities (such as the density $\rho$ and the electric field $E$) occurs due to phase mixing and relies mainly on the high regularity of the initial data (see also \cite{LZ2011}). After some preliminary works \cite{CM1998,HV2009}, nonlinear Landau damping was proved in pioneering work by Mouhot--Villani \cite{MouVil}, who showed that small and analytic perturbations of homogeneous equilibria $M_0$ that satisfy the Penrose criterion \eqref{eq:Penrose} lead to global solutions which scatter linearly, with density functions that decay exponentially in time. The theory has been refined significantly by Bedrossian-Masmoudi-Mouhot \cite{BMM2016} and then Grenier-Nguyen-Rodnianski \cite{GrNgRo}, who expanded the class of acceptable perturbations to the more flexible class of Gevrey spaces $\mathcal{G}_{d'}^{\lambda_0,s}$, in the almost optimal range $s>1/3$, and simplified the argument substantially by using energy estimates with suitably defined weights instead of a global Newton scheme.

Our Theorem \ref{MainThm} above represents a sharp endpoint result for this classical line of research on Landau damping in the confined case. Stability in Gevrey-3 spaces was already conjectured in \cite[Section 13]{MouVil}, and it is known by now that these spaces are best possible, due to the recent results in \cite{Be0,Be,Zil2021}. The theorem also applies to the vacuum case $M_0\equiv 0$; the normalization \eqref{VP00} is only needed to pass from \eqref{VPO} to \eqref{NVP}.

A second contribution of this paper is the development of a new and complete scattering theory for the Vlasov-Poisson system in the confined case around homogeneous equilibria, in Theorems \ref{MainThm22} and \ref{MainThm33}, also in optimal weighted Gevrey-3 spaces. The existence of the scattering operator (or even just the injectivity of the final state operator $f_0\mapsto g_\infty$ proved in Theorem \ref{MainThm}) has a natural physical interpretation: despite its name, ``Landau damping'' is different from other types of damping (for example damping of solutions of parabolic equations) in the sense that there is no loss of information during the nonlinear evolution in passing from the initial data $f_0$ to the final state $g_\infty$. The ``damping'' takes place only at the level of macroscopic quantities, like the density, consistent with the time-reversibility of the system.

In view of these results Landau damping is well understood by now in the confined case. We remark, however, that nonlinear Landau damping around homogeneous equilibria remains a major open problem in the \emph{unconfined case}, particularly in 3 dimensions $x\in\R^3$, both for the Vlasov-Poisson system and for the related relativistic Vlasov-Maxwell system. A key difference is that the Penrose condition \eqref{eq:Penrose} cannot hold uniformly as the frequency $\tau$ goes to $0$; as a result the Green's function cannot have fast decay and macroscopic quantities like the density and the electric field can only decay slowly, at dimension-dependent polynomial rates in $\langle t\rangle^{-1}$, which are not sufficient in dimension $d=3$. Linear stability results are known in these cases (see, for example, \cite{HKNgRo,BeMaMoLin,IoPaWaWiLinear,HKNgRo2} for recent work in this direction), indicating the possibility of nonlinear stability, but the only nonlinear result so far was proved recently by the authors in \cite{IoPaWaWiPoisson} for the Poisson equilibrium.

We conclude this subsection with a short and (far from exhaustive) discussion on related topics, and some additional references:

$\bullet\,\,$ Global existence of solutions of the Vlasov-Poisson system is classical in low dimensions, see \cite{BD1985,Pfa1992,LP1991}. We refer to \cite{Be} for a survey of recent results. The global dynamics of solutions, including the presence of logarithmically modified characteristics and modified scattering, is also well understood in the case of small initial data in $\R^3$ (which corresponds to the nonlinear stability of vacuum) using various methods, see \cite{CK2016, Smu2016, Wang2018, IoPaWaWiVacuum}.

$\bullet\,\,$ The construction of nonlinear wave operators and the scattering theory is a natural problem that has been extensively investigated for many dispersive and hyperbolic equations. In the context of collisionless kinetic equations, it has only been developed recently starting with the construction of the wave operator and the scattering map for the Vlasov-Poisson system near vacuum on $\R^3\times\R^3$ in \cite{FOPW21}. See also the more recent results in \cite{Big2022, Big2023, BiVe, PBA2024, ST2024}.

$\bullet\,\,$ In some variations of the Vlasov-Poisson system associated to the ion dynamics, one considers \textit{screened} interactions. In this case, the dynamics of low-frequencies is suppressed, which leads to favorable estimates and asymptotic stability \cite{GI2023}, also in the unconfined setting in 3D (see \cite{BeMaMoScreened,HKNgRoScreened}) and even in 2D (see \cite{HuNgXu}),  See also \cite{BeS, ChLuNg, FaRo, FaHoRo, HRSS2023, LiSt1, LZ2011, PW2020, PWY2022} for other stability results in related kinetic models. 

$\bullet\,\,$ Other classical plasma models include the hydrodynamic models of Euler-Poisson and Euler-Maxwell type, where some stability results are known in the unconfined Euclidean case (see \cite{Guo1998,GIP2016}), but no global stability results are known in the confined case. 

\subsection{Main ideas} Our proofs are based on what we call {\it{the $Z$-norm method}}. This is a general method that has been used extensively by the authors to prove global stability of various quasi-linear dispersive and hyperbolic equations, such as water waves, plasma models, and the Einstein equations of General Relativity. At the abstract level, it consists of proving simultaneous bootstrap control of two types of quantities:

(1) Energy functionals at the highest order of derivatives, sometimes including vector-fields that commute with the linearized equation;

(2) A special $Z$-norm (usually not based on $L^2$ spaces) which provides stronger control of the solutions, including time decay, but at a lower level of derivatives.

In our specific case, to prove Theorem \ref{MainThm} we use energy functionals defined in the Fourier space, of the form 
\begin{equation}\label{ide1}
\mathcal{E}^0_g(t):=\sum_{k\in \mathbb{Z}^d}\int_{\R^d}A(t,k,\xi)^2\Big\{\sum_{|a|\leq d'}\big|D^a_\xi\widehat{g}(t,k,\xi)\big|^2\Big\}\,d\xi,
\end{equation}
where $g$ is the profile of the solution and $A:[0,\infty)\times \Z^d\times\R^d\to [1,\infty)$ is a family of suitable weights. The precise choice of the weights $A(t,k,\xi)=e^{\lambda(t,|k,\xi|)}$ is very important, see the discussion in subsection \ref{NewW} below. These weights are decreasing in time, to reflect the "sliding regularity" basic mechanism of the proof, and increasing in frequency at a rate consistent with Gevrey-3 spaces, $\lambda(t,r)\approx \langle r\rangle^{1/3}$. 

We then define a matching $Z$-norm, which controls the macroscopic density, by the formula
\begin{equation}\label{ide2}
\mathcal{Z}^{0}_g(t):=\sup_{k\in \mathbb{Z}^d\setminus \{0\}}A(t,k,tk)\big|\widehat{g}(t,k,tk)\big||k|^{-1/2}=\sup_{k\in \mathbb{Z}^d\setminus \{0\}}A(t,k,tk)\big|\widehat{\rho}(t,k)\big||k|^{-1/2}.
\end{equation}
This type of $Z$-norms (weighted $L^\infty$ norms in the Fourier space) have been used extensively by the authors, starting with the work \cite{IoPu} on gravity water waves in 2D. A similar norm was also used in \cite{GrNgRo}, with a different weight $A$.

The basic idea is to prove uniform bootstrap control of the form 
\begin{equation}\label{ide3}
\mathcal{E}^0_{g}(t)+\big[\mathcal{Z}_{g}^{0}(t)\langle t\rangle^{6d}\big]^2\lesssim \kappa_0^2,
\end{equation}
for solutions $g$ of the system \eqref{NVP3}, where $\kappa_0$ denotes the size of the initial data. Notice the basic interplay of the proof: the energy norm is strong, but does not decay in time, while the $Z$-norm is comparatively weaker in terms of regularity (due to the factor of $|k|^{-1/2}$ in the definition), but is able to capture the time decay of the macroscopic density. The precise powers $1/2$ and $6d$ are related in the proof, but there is some flexibility in their choice.

The proof of the bootstrap estimates \eqref{ide3} consists of two main steps. First we use the density equation 
\begin{equation}\label{ide3.1}
\widehat{\rho}(t,k)+\int_{0}^t\widehat{\rho}(s,k)\widehat{M_0}((t-s)k)(t-s)\,ds=\widehat{\mathcal{N}}(t,k),
\end{equation}
where $\mathcal{N}$ is a suitable nonlinearity, together with bounds on Green's function associated with the equilibrium $M_0$, to control the $Z$-norm of the solution. Then we use the basic equation \eqref{NVP3} for $g$, in the Fourier space form \eqref{NVP4}, to prove uniform control on the functional $\mathcal{E}^0_{g}$. 

\subsection{New weights}\label{NewW} The main novelty in our construction, which allows us to reach the sharp Gevrey-3 regularity, is the choice of the weights $A$. The standard choice used in \cite{MouVil,BMM2016,GrNgRo} involves weights having favorable product structure of the form 
\begin{equation}\label{ide4}
A(t,k,\xi)=e^{\lambda(t,|k,\xi|)},\qquad \lambda(t,r)=\lambda_1(t)\lambda_2(r),
\end{equation}
for suitable functions $\lambda_1$ (decreasing) and $\lambda_2$ (increasing). This product structure easily leads to bilinear estimates of the form
\begin{equation}\label{ide5}
A(t,k_1+k_2,\xi_1+\xi_2)\lesssim A(t,k_1,\xi_1)A(t,k_2,\xi_2)\{(1+|k,\xi|)^{-4}+(1+|k',\xi'|)^{-4}\},
\end{equation}
for any $t,k,k',\xi,\xi'$, which play a crucial role in nonlinear analysis. Weights of this type have been used in \cite{BMM2016} and \cite{GrNgRo} (in the context of so-called "generator functions") to prove Landau damping in Gevrey spaces $\mathcal{G}_{d'}^{\lambda_0,s}$ in the full subcritical range $s>1/3$, but appear to be insufficient to deal with the critical case $s=1/3$.

Our main new idea is to work with a more general class of weights, of the form
\begin{equation}\label{ide6}
A(t,k,\xi)=e^{\lambda(t,|k,\xi|)},\qquad \lambda(t,r)=[\lambda_1+\delta(1+t)^{-\delta}]\langle r\rangle^{1/3}+\delta\big(1+t\langle r\rangle^{-2/3}\big)^{-\delta}\langle r\rangle^{1/3},
\end{equation}
where $\lambda_1\in[0.6\lambda_0,0.9\lambda_0]$ and $\delta=\lambda_0/200$. These weights do not have the basic product structure \eqref{ide4}, due to the last term, but we can still prove favorable bilinear estimates like \eqref{ide5} (in the stronger form given in \eqref{gu6}), as well as suitable commutator estimates like \eqref{gu6.5}, which are needed to prevent derivative loss. The key point is that we can choose the last term in the definition \eqref{ide6} in a way that improves the analysis of the density equation; for example we are able to prove uniform estimates like 
\begin{equation}\label{ide7}
\sum_{a\in\Z^d,\,|a|\leq |k|/8}\int_{0.99t}^{t}\frac{|t-s|A(t,k,tk)}{A(s,k,tk)}e^{-\delta(|a|^{1/3}+|sa+(t-s)k|^{1/3})}\,ds\lesssim 1,
\end{equation}
for any $t\in[10,\infty)$ and $k\in\Z^d\setminus\{0\}$ (see \eqref{pam16}), which ultimately allow us to close the main bootstrap argument at the level of the sharp Gevrey-3 spaces.

\subsection{The final state problem} At the quantitative level, the estimates we need to prove Theorem \ref{MainThm22} are similar to the estimates in the initial data problem, using a combination of an energy functional and a $Z$-norm. However we need a new family of weights  $A^\sharp(t,k,\xi)=e^{\lambda^\sharp(t,|k,\xi|)}:[0,\infty)\times \Z^d\times\R^d\to [1,\infty)$, which are now increasing in time to indicate that the solution comes from infinity and loses regularity as $t$ decreases. We also need a new density equation, of the form
\begin{equation}\label{ide8}
\widehat{\rho}(t,k)-\int_t^{\infty}\widehat{\rho}(s,k)\widehat{M_0}((t-s)k)(t-s)\,ds=\widehat{\mathcal{N}'}(t,k),
\end{equation}
for a suitable nonlinearity $\mathcal{N}'$, which replaces the equation \eqref{ide3.1} and gives us a basic formula for the density $\rho$ at time $t$ in terms of $\rho$ at later times.

To construct the solution itself we construct a family of solutions $g_n$ of the system \eqref{NVP3} on the time interval $[0,n]$, with data $g_n(n)=g_\infty$. Then we prove estimates on the differences of these solutions, and find the full solution $g$ as the limit as $n\to \infty$ of $g_n$. This construction is possible as long as we allow partial loss of regularity, as claimed in Theorem \ref{MainThm22}, and provided that we control very well differences of solutions.

\subsection{Organization} The rest of the paper is organized as follows: in section \ref{BasCon} we derive our main equations for the profile $g$ and the density $\rho$, define our two families of weights and prove some of their basic properties, and state our two main bootstrap propositions, Propositions \ref{MainBootstrap} and \ref{MainBootstrapInfin}. Then we prove these propositions in sections \ref{pam1} and \ref{pamA1} respectively. In section 5 we show how to use these bootstrap propositions to complete the proofs of the main Theorems \ref{MainThm}, \ref{MainThm22}, and \ref{MainThm33}. Finally, in section 6 we prove Lemma \ref{GreenF}, which provides suitable bounds on the Green's function associated to the equilibrium $M_0$.

\section{The main equations and the bootstrap proposition}\label{BasCon}

Recall the main equations \eqref{NVP3} for the kinetic profile $g$. We assume that $T_1\leq T_2\in[0,\infty)$ and $g\in C([T_1,T_2]:\mathcal{G}^{\lambda_0/2,1/3}_{d'}(\T^d\times\R^d))$ is a solution of the system \eqref{NVP3}, satisfying the neutrality condition $\int_{\T^d\times\R^d}g(t,x,v)\,dxdv=0$. Taking the Fourier transform in both $x$ and $v$ variables we derive the equations
\begin{equation}\label{NVP4}
\begin{split}
&\partial_t\widehat{g}(t,k,\xi)+\widehat{\rho}(t,k)\widehat{M_0}(\xi-tk)\frac{k\cdot (\xi-tk)}{|k|^2}+\frac{1}{(2\pi)^d}\sum_{l\in\Z^d}\widehat{\rho}(t,l)\widehat{g}(t,k-l,\xi-tl)\frac{l\cdot(\xi-tk)}{|l|^2}=0,\\
&\widehat{\rho}(t,k):=\widehat{g}(t,k,tk).
\end{split}
\end{equation}

\subsection{The initial data problem} It follows from \eqref{NVP4} that
\begin{equation*}
\begin{split}
\widehat{g}(t,k,\xi)-\widehat{g}(T_1,k,\xi)&+\int_{T_1}^t\widehat{\rho}(s,k)\widehat{M_0}(\xi-sk)\frac{k\cdot (\xi-sk)}{|k|^2}\,ds\\
&+\frac{1}{(2\pi)^d}\sum_{l\in\Z^d}\int_{T_1}^t\widehat{\rho}(s,l)\widehat{g}(s,k-l,\xi-sl)\frac{l\cdot(\xi-sk)}{|l|^2}\,ds=0.
\end{split}
\end{equation*}
Letting $\xi=tk$ it follows that
\begin{equation}\label{NVP6}
\begin{split}
&\widehat{\rho}(t,k)+\int_{T_1}^t\widehat{\rho}(s,k)\widehat{M_0}((t-s)k)(t-s)\,ds=\widehat{\mathcal{N}}(t,k),\\
&\widehat{\mathcal{N}}(t,k):=\widehat{g}(T_1,k,tk)-\frac{1}{(2\pi)^d}\sum_{l\in\Z^d\setminus\{0\}}\int_{T_1}^t\widehat{\rho}(s,l)\widehat{g}(s,k-l,tk-sl)\frac{(t-s)l\cdot k}{|l|^2}\,ds.
\end{split}
\end{equation}

We would like to use the Volterra equation \eqref{NVP6} and the main assumptions \eqref{M01}--\eqref{eq:Penrose} to prove an important formula for $\rho$. We extend $\widehat{\rho}$ to a function on $\R\times\Z^d$ supported in $[T_1,T_2]\times\Z^d$, and extend the function $\widehat{\mathcal{N}}$ according to the identity in the first line of \eqref{NVP6} (clearly $\widehat{\mathcal{N}}(t,k)=0$ if $t<T_1$). Then we define, for $\tau\in\R$ and $k\in\Z^d$,
\begin{equation}\label{pam22.1}
\widetilde{\rho}(\tau,k):=\int_\R\widehat{\rho}(t,k)e^{-it\tau}\,dt,\qquad \widetilde{\mathcal{N}}(\tau,k):=\int_\R\widehat{\mathcal{N}}(t,k)e^{-it\tau}\,dt.
\end{equation}

For $\tau\in\mathbb{H}_-:=\{\tau\in\C:\,\Im\tau\leq 0\}$ and $k\in\Z^d\setminus\{0\}$ let
\begin{equation}\label{gu14}
L(\tau,k):=\int_0^\infty e^{-i\tau s}s\widehat{M_0}(sk)\,ds.
\end{equation}
In view of \eqref{M01}, for any $k\in\Z^d\setminus\{0\}$ the function $L(.,k)$ is well-defined and continuous in $\mathbb{H}_-$, and analytic in the open half-plane $\mathbb{H}^\circ_-:=\{\tau\in\C:\,\Im\tau< 0\}$.

It follows from the identity in the first line of \eqref{NVP6} that
\begin{equation}\label{pam22}
\widetilde{\rho}(\tau,k)\big(1+L(\tau,k)\big)=\widetilde{\mathcal{N}}(\tau,k),\qquad\tau\in\R,\,k\in\Z^d\setminus\{0\},
\end{equation}
where the function $L$ is defined as in \eqref{gu14}. For $\tau\in\mathbb{H}_-$ and $k\in\Z^d\setminus\{0\}$ let 
\begin{equation}\label{gu18}
L'(\tau,k):=\frac{L(\tau,k)}{1+L(\tau,k)}.
\end{equation}
In view of the Penrose condition \eqref{eq:Penrose}, the function $L':\mathbb{H}_-\times (\Z^d\setminus\{0\})\to\C$ is well-defined and continuous in $\mathbb{H}_-$, and analytic in $\mathbb{H}^\circ_-$. Moreover, using \eqref{pam22},
\begin{equation}\label{pam26}
\widetilde{\rho}(\tau,k)=\widetilde{\mathcal{N}}(\tau,k)-\widetilde{\mathcal{N}}(\tau,k)L'(\tau,k),\qquad\tau\in\R,\,k\in\Z^d\setminus\{0\}.
\end{equation}
We take the inverse Fourier transform in $\tau$ to conclude that
\begin{equation}\label{gu20.5}
\begin{split}
&\widehat{\rho}(t,k)=\widehat{\mathcal{N}}(t,k)-\int_\R\widehat{\mathcal{N}}(t-s,k)\widehat{G}(s,k)\,ds\quad\text{ where }\quad\widehat{G}(s,k):=\frac{1}{2\pi}\int_\R e^{is\tau}L'(\tau,k)\,d\tau.
\end{split}
\end{equation}

Our next lemma provides estimates on the functions $L,L'$ and the Green's function $\widehat{G}$, which show in particular that the formal calculations above are justified.

\begin{lemma}\label{GreenF} (i) The functions $L,L':\mathbb{H}_-\times(\Z^d\setminus\{0\})\to\C$ defined in \eqref{gu14} and \eqref{gu18} are well-defined and continuous in $\mathbb{H}_-$, analytic in $\mathbb{H}^\circ_-$, and satisfy the bounds
\begin{equation}\label{gu19.1}
\begin{split}
\big\|\langle \alpha\rangle L(\alpha+i\gamma,k)\big\|_{L^2_\alpha}+\big\|\langle \alpha\rangle L'(\alpha+i\gamma,k)\big\|_{L^2_\alpha}&\leq C_0^\ast |k|^{-1/2},\\
\big\|\langle\alpha\rangle D^a_\alpha L(\alpha+i\gamma,k)\big\|_{L^\infty_\alpha}+\big\|\langle\alpha\rangle D^a_\alpha L'(\alpha+i\gamma,k)\big\|_{L^\infty_\alpha}&\leq C_0^\ast \frac{(3a)!(1.01)^{a}}{(|k|\lambda_0^3)^{a+1}},
\end{split}
\end{equation}
for any $k\in\Z^d\setminus\{0\}$, $\gamma\in(-\infty,0]$, and $a\in\Z_+$. The constant $C_0^\ast$ may depend only on the parameters $d$, $\lambda_0$, and $\vartheta$.

(ii) With $\widehat{\rho}$ and $\widehat{\mathcal{N}}$ as in \eqref{NVP6}, for any $t\in\R$ and $k\in\Z^d\setminus\{0\}$ we have
\begin{equation}\label{pam27.1}
\widehat{\rho}(t,k)=\widehat{\mathcal{N}}(t,k)-\int_{T_1}^t\widehat{\mathcal{N}}(s,k)\widehat{G}(t-s,k)\,ds
\end{equation}
where the Green's function $\widehat{G}:\R\times(\Z^d\setminus\{0\})\to\C$ defined as in \eqref{gu20.5} is supported in $[0,\infty)\times (\Z^d\setminus\{0\})$ and satisfies the uniform bounds
\begin{equation}\label{pam27.2}
\big|\widehat{G}(s,k)\big|\leq C_0^\ast e^{-0.95\lambda_0|sk|^{1/3}}.
\end{equation}
\end{lemma}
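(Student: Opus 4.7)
My plan is to proceed in three stages: derive bounds on $L$ directly from the Gevrey-3 hypothesis on $\widehat{M_0}$; transfer them to $L'=L/(1+L)$ using the Penrose condition; and then invert the Fourier transform in $\tau$ to extract the Green's function identity \eqref{pam27.1} together with its exponential decay \eqref{pam27.2}.

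For the bounds on $L$ in part (i), I view $L(\alpha+i\gamma,k)$ as the Fourier transform in $\alpha$ of $h_{k,\gamma}(s):=s\widehat{M_0}(sk)e^{\gamma s}\mathbf{1}_{s\geq 0}$. The assumption \eqref{M01} gives $|h_{k,\gamma}(s)|\lesssim s\,e^{-\lambda_0(s|k|)^{1/3}}$ uniformly in $\gamma\leq 0$, and the substitution $r=s|k|$ combined with Plancherel produces the $L^2_\alpha$ bound with the correct $|k|^{-3/2}$ decay. The extra $\langle\alpha\rangle$ factor is absorbed by one integration by parts in $s$, where the control of $|\xi||\nabla_\xi\widehat{M_0}(\xi)|$ in \eqref{M01} is crucial since $\partial_s[s\widehat{M_0}(sk)]=\widehat{M_0}(sk)+(sk)\cdot\nabla_\xi\widehat{M_0}(sk)$. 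For the $L^\infty$ derivative bound, differentiation under the integral gives $D^a_\alpha L(\tau,k)=\int_0^\infty(-is)^a e^{-i\tau s}s\widehat{M_0}(sk)\,ds$, and the same change of variables reduces the estimate to $\int_0^\infty r^{3a+5}e^{-\lambda_0 r}\,dr=(3a+5)!/\lambda_0^{3a+6}$; the polynomial excess $(3a+5)!/(3a)!$ is absorbed into $C_0^\ast$ and any exponential excess into the $(1.01)^a$ factor. Analyticity of $L$ in $\mathbb{H}^\circ_-$ and continuity on $\mathbb{H}_-$ are immediate from absolute convergence of the defining integral and dominated convergence.

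The bounds on $L'$ follow from the Penrose lower bound $|1+L|\geq\vartheta$ throughout $\mathbb{H}_-$: the $L^2$ estimate is pointwise since $|L'|\leq\vartheta^{-1}|L|$, and for higher derivatives I write $L'=1-(1+L)^{-1}$ and apply Fa\`a di Bruno to $(1+L)^{-1}$. The outer function $x\mapsto(1+x)^{-1}$ has derivatives bounded by $m!/\vartheta^{m+1}$ on the range of $L$, and a standard Gevrey-3 composition estimate (in which the partition constraint $\sum_j jk_j=a$ controls the factorial sums) preserves the $(3a)!$ growth, with the combinatorial overhead absorbed into $(1.01)^a$ and the $\vartheta$-dependence into $C_0^\ast$.

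For part (ii), multiplying \eqref{pam22} by $(1+L)^{-1}$ yields $\widetilde{\rho}=\widetilde{\mathcal{N}}(1-L')$, whose inverse Fourier transform in $\tau$ is \eqref{pam27.1} provided $\widehat{G}(\cdot,k)$ is supported in $[0,\infty)$. This vanishing follows by shifting the contour $\tau=\alpha$ to $\tau=\alpha-iN$ and letting $N\to\infty$: analyticity of $L'$ in $\mathbb{H}^\circ_-$ together with the decay $|L'(\alpha-iN,k)|\to 0$ (inherited from the $L$ bounds) permits the shift, while $|e^{is(\alpha-iN)}|=e^{sN}\to 0$ for $s<0$. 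The Gevrey-3 decay \eqref{pam27.2} is the heart of the matter. I would use the derivative bounds from part (i) to obtain $|s|^a|\widehat{G}(s,k)|\lesssim\|D^a_\alpha L'(\cdot,k)\|_{L^1_\alpha}$ via integration by parts in $\alpha$, control the $L^1$ norm by interpolating the $\langle\alpha\rangle$-weighted $L^2$ bound for lower derivatives with the weighted $L^\infty$ bound for the top derivative, and then optimize $a\approx c\lambda_0|sk|^{1/3}$ using Stirling's formula. The main obstacle I expect is precisely this final optimization: the combined losses from the $(1.01)^a$ factor, the Fa\`a di Bruno combinatorics, and the factorial-to-exponential conversion via Stirling must all fit within the small margin between $\lambda_0$ and $0.95\lambda_0$, which demands careful bookkeeping of constants uniformly in $k\in\Z^d\setminus\{0\}$.
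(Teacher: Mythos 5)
Your overall route coincides with the paper's: integration by parts in $s$ plus Plancherel for the weighted $L^2$ bound on $L$ (using precisely the term $|\xi|\,|\nabla_\xi\widehat{M_0}(\xi)|$ from \eqref{M01}), the change of variables $r\sim\lambda_0(s|k|)^{1/3}$ for the $L^\infty$ derivative bounds, the Penrose lower bound together with Fa\'a di Bruno for $L'$, a contour shift $\tau=\alpha+i\beta$, $\beta\to-\infty$, for the support of $\widehat{G}$, and repeated integration by parts in $\alpha$ with the choice $a\approx\lambda_0(s|k|)^{1/3}/3$ and Stirling for \eqref{pam27.2}. Two caveats. First, on the Fa\'a di Bruno step you appeal to a ``standard Gevrey-3 composition estimate''; what is actually needed is the sharp statement that the combinatorial loss is only $(1+\epsilon)^a$ (here $(1.01)^a$), and this is where the paper spends most of its effort (the estimate of $H_{m_1,\ldots,m_n}$ and the partition count in \eqref{gu24}--\eqref{gu27}). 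Since the final optimization converts $(1.01)^a$ into a loss in the exponent of \eqref{pam27.2}, a cruder composition bound with loss $C^a$ for a large $C$ would not give $0.95\lambda_0$, so this step cannot be waved through.

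Second, and more substantively: the inequality $|s|^a|\widehat{G}(s,k)|\lesssim\|D^a_\alpha L'(\cdot,k)\|_{L^1_\alpha}$ is not available from the bounds you have established. The $L^\infty$ estimates carry only a $\langle\alpha\rangle^{-1}$ weight, which is not integrable on $\R$, and the lemma provides an $L^2_\alpha$ bound only for $L'$ itself, not for its derivatives; so the ``interpolation'' you invoke to control $\|D^a_\alpha L'\|_{L^1_\alpha}$ rests on weighted $L^2$ bounds for $D^{a'}_\alpha L'$ that you have neither stated nor proved (they can be obtained by running Plancherel and Fa\'a di Bruno again, but that is additional work). The paper avoids this entirely by integrating by parts only on the region $|\alpha|\leq e^{s|k|}$, keeping the boundary terms (controlled by the $\langle\alpha\rangle^{-1}$-weighted $L^\infty$ bounds), and estimating the tail $|\alpha|\geq e^{s|k|}$ using the \emph{underived} function $L'$ via Cauchy--Schwarz and the $L^2$ bound in \eqref{gu19.1}; the price is a factor $\int_{|\alpha|\leq e^{s|k|}}\langle\alpha\rangle^{-1}\,d\alpha\lesssim |sk|$, which is harmless against the Gevrey-3 exponential. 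As written, your final step does not close without either this truncation device or the missing derivative $L^2$ bounds.
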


We provide a self-contained proof of this lemma in section \ref{ProofGreen}. 

\subsubsection{Energy functionals and the first bootstrap proposition}\label{weightsdef} The main idea is to estimate the increment of suitable functionals, which are defined using special weights. For $t,r\in\R_+$, $k,\xi\in\R^d$ we define
\begin{equation}\label{gu1}
\begin{split}
&\lambda(t,r):=\lambda_1\langle r\rangle^{1/3}+\delta(1+t)^{-\delta}\langle r\rangle^{1/3}+\delta\big(1+t\langle r\rangle^{-2/3}\big)^{-\delta}\langle r\rangle^{1/3},\\
&A(t,k,\xi):=e^{\lambda(t,|k,\xi|)},
\end{split}
\end{equation}
where $\lambda_1\in[0.5\lambda_0,0.9\lambda_0]$, $\delta:=\lambda_0/200$ and $\langle w\rangle:=(1+|w|^2)^{1/2}$, $w\in\R^n$.

The precise definition of the weight $A$ is important; some properties are proved in section \ref{TotalWeights} below. Notice that the weights $\lambda$ and $A$ are decreasing in $t$. 

Our first bootstrap argument is based on controlling simultaneously an energy functional for the profile $g$ and a decaying norm for the density $\rho$. Let $d'=\lfloor d/2+1\rfloor$ and define the spaces
\begin{equation}\label{XTspace}
\begin{split}
&X_{[T_1,T_2]}:=\Big\{h\in C([T_1,T_2]:\mathcal{G}^{\lambda_0/2,1/3}_{d'}(\T^d\times\R^d)):\\
&\quad\|h\|_{X_{[T_1,T_2]}}:=\sup_{t\in[T_1,T_2]}\Big\{\sum_{k\in \mathbb{Z}^d}\int_{\R^d}A(t,k,\xi)^2\Big [\sum_{|a|\leq d'}\big|D^a_\xi\widehat{h}(t,k,\xi)\big|^2\Big ]\,d\xi\Big\}^{1/2}<\infty\Big\},
\end{split}
\end{equation}
for $T_1\leq T_2\in [0,\infty)$. For any $h\in X_{[T_1,T_2]}$ and $t\in[T_1,T_2]$, $\beta\in[0,1]$, and $p\in[0,2]$ we define
\begin{equation}\label{rec1}
\begin{split}
\mathcal{E}^p_h(t)&:=\sum_{k\in \mathbb{Z}^d}\int_{\R^d}\langle k,\xi\rangle^{-2p}A(t,k,\xi)^2\Big\{\sum_{|a|\leq d'}\big|D^a_\xi\widehat{h}(t,k,\xi)\big|^2\Big\}\,d\xi,\\
\mathcal{Z}^{p}_h(t)&:=\sup_{k\in \mathbb{Z}^d\setminus \{0\}}\langle k,tk\rangle^{-p}A(t,k,tk)\big|\widehat{h}(t,k,tk)\big||k|^{-\beta}.
\end{split}
\end{equation}
There are two parameters in these definitions: the parameter $\beta$ is taken $\beta=1/2$ in the proof, and it indicates the fact that the $Z$-norms of solutions are measured at lower order of differentiability than the energy norms. The parameter $p$ is taken either $p=0$ to estimate the solutions themselves or $p=2$ to estimate the difference of solutions (due to some loss of symmetry, the difference of solutions can only be estimated in slightly weaker norms).

Our first main bootstrap proposition is the following: 

\begin{proposition}\label{MainBootstrap}
Assume $T_1\leq T_2\in[0,\infty)$ and $g_1,g_2\in X_{[T_1,T_2]}$ are real-valued solutions of the system \eqref{NVP3} satisfying $\widehat{g_i}(t,0,0)=0$ for any $t\in[T_1,T_2]$ and $i\in\{1,2\}$. Assume that $\eps_0\leq\overline{\eps}\ll 1$, $\lambda_1\in[0.5\lambda_0,0.9\lambda_0]$, and
\begin{equation}\label{boot1}
\|g_i(T_1)\|_{{\mathcal{G}_{d'}^{\lambda_1+4\delta,1/3}}}\leq\overline\eps,\qquad \|(g_1-g_2)(T_1)\|_{\mathcal{G}_{d'}^{\lambda_1+4\delta,1/3}}\leq\eps_0,
\end{equation}
for $i\in\{1,2\}$. Let $\beta=1/2$ and assume that for any $t\in[T_1,T_2]$ and $i\in\{1,2\}$
\begin{equation}\label{boot2}
\begin{split}
&\mathcal{E}^0_{g_i}(t)+\big[\mathcal{Z}_{g_i}^{0}(t)\langle t\rangle^{6d}\big]^2\leq 4\overline{C}^2\overline{\eps}^2,\\
&\mathcal{E}^2_{g_1-g_2}(t)+\big[\mathcal{Z}_{g_1-g_2}^{2}(t)\langle t\rangle^{6d}\big]^2\leq 4\overline{C}^2\eps_0^2,
\end{split}
\end{equation}
where $\overline{C}\leq\overline{\eps}^{-1/8}$ is a sufficiently large constant that depends only on the structural constants $d,\lambda_0,\vartheta$. Then for any $t\in[T_1,T_2]$ and $i\in\{1,2\}$ we have the improved bounds
\begin{equation}\label{boot3}
\begin{split}
&\mathcal{E}^0_{g_i}(t)+\big[\mathcal{Z}_{g_i}^{0}(t)\langle t\rangle^{6d}\big]^2\leq \overline{C}^2\overline{\eps}^2,\\
&\mathcal{E}_{g_1-g_2}^2(t)+\big[\mathcal{Z}_{g_1-g_2}^{2}(t)\langle t\rangle^{6d}\big]^2\leq \overline{C}^2\eps_0^2.
\end{split}
\end{equation}
\end{proposition}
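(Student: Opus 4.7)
The plan is to derive the improved bounds \eqref{boot3} by running two coupled estimates: a pointwise Volterra-type estimate for the density that yields control of $\mathcal{Z}^p_{g_i}$ with the desired time decay, and a Fourier-based energy estimate for the profile that yields control of $\mathcal{E}^p_{g_i}$. Throughout, I would use the bootstrap assumptions \eqref{boot2} freely to close all quadratic terms, combined with the smallness of the initial data \eqref{boot1} and the largeness of $\overline{C}$ to produce the factor-of-four gain.

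For the $Z$-norm estimate, I would start from the Green's function representation \eqref{pam27.1}. Combining the pointwise bound \eqref{pam27.2} on $\widehat{G}$ with the definition of $\mathcal{Z}^p$, the estimate reduces to controlling $\langle k,tk\rangle^{-p} A(t,k,tk)|k|^{-\beta}|\widehat{\mathcal{N}}(s,k)|$ uniformly for $s\leq t$ and $k\in\Z^d\setminus\{0\}$. The linear term $\widehat{g}(T_1,k,tk)$ is controlled by the initial data assumption, using that $A(t,\cdot)\leq A(T_1,\cdot)$ since the weight decreases in time. The bilinear term in $\widehat{\mathcal{N}}$ is the heart of the argument: one decomposes the $l$-sum and the $s$-integral into a reaction regime ($l$ close to $k$ and $s$ close to $t$), a transport regime ($|l|\ll|k|$), and a remainder. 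In each regime, the bilinear weight bound \eqref{gu6} splits $A(t,k,tk)$ between the factors $\widehat{\rho}(s,l)$ and $\widehat{g}(s,k-l,tk-sl)$; the bootstrap on $\mathcal{E}^0_{g_i}$ controls the $\widehat{g}$-factor in $L^2$ after absorbing the $\xi$-derivatives by Sobolev in the $d'=\lfloor d/2+1\rfloor$ direction, while the bootstrap on $\mathcal{Z}^0_{g_i}$ controls $\widehat{\rho}(s,l)$ with the $\langle s\rangle^{-6d}$ decay needed to beat the growth factor $(t-s)|l\cdot k|/|l|^2$ coming from the kernel.

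For the energy estimate, differentiating $\mathcal{E}^p_{g_i}(t)$ in time using \eqref{NVP4} produces three contributions: a coercive term from $\partial_t[A^2]$ (the weights decrease in $t$, so this provides a good damping $-\mathcal{D}^p_{g_i}(t)$); a linear term driven by $\widehat{\rho}(t,k)\widehat{M_0}(\xi-tk)$; and a nonlinear convolution term with kernel $\widehat{\rho}(t,l)\widehat{g}(t,k-l,\xi-tl)$. The linear and nonlinear terms are handled by the bilinear weight inequality \eqref{ide5} and the commutator estimate \eqref{gu6.5} to transfer $A(t,k,\xi)$ onto each factor and to deal with the $D^a_\xi$ derivatives without loss. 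The $\langle\cdot\rangle^{-4}$ gain extracted from \eqref{ide5}, together with the bootstrap on $\mathcal{Z}^0$ and on $\mathcal{E}^0$, then yields enough smallness and decay to absorb these contributions into the coercive damping $\mathcal{D}^p_{g_i}(t)$, using in particular the sharp integral estimate \eqref{ide7} to control the weight quotient across the near-resonant time window.

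The main obstacle is the near-resonant reaction term at the critical Gevrey-$3$ exponent: for $l\approx k$ and $s\approx t$, the kernel grows like $t$ and the weights of the two bilinear factors on the right must very nearly reconstitute the left-hand weight $A(t,k,tk)$. This is precisely the regime in which the product-structure weights \eqref{ide4} of earlier works become insufficient. The new third summand $\delta(1+t\langle r\rangle^{-2/3})^{-\delta}\langle r\rangle^{1/3}$ in the weight $\lambda$ of \eqref{gu1} is engineered to produce an extra exponential factor $e^{-\delta(|a|^{1/3}+|sa+(t-s)k|^{1/3})}$ in the quotient $A(t,k,tk)/A(s,k,tk)$, which is exactly what makes the integral \eqref{ide7} bounded uniformly in $t$ and $k$. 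Once this is available, a $\sqrt{\delta}$ smallness can be extracted from all remaining nonlinear contributions, which combined with \eqref{boot2} and the convention $\overline{C}\leq\overline{\eps}^{-1/8}$ delivers the improvement from $4\overline{C}^2\overline{\eps}^2$ to $\overline{C}^2\overline{\eps}^2$. The difference estimates $\mathcal{E}^2_{g_1-g_2}$ and $\mathcal{Z}^2_{g_1-g_2}$ follow the same strategy, but a symmetrization that saves derivatives for the sum fails for the difference, forcing the two-derivative loss encoded in the choice $p=2$.
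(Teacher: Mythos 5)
Your proposal follows essentially the same route as the paper's proof: the $Z$-norm is improved via the Green's function representation \eqref{pam27.1} together with trilinear bounds on $\widehat{\mathcal{N}}$ whose critical near-resonant regime ($l$ close to $k$, $s$ close to $t$) is handled exactly by the extra weight decay encoded in \eqref{pam16}/\eqref{ide7}, and the energy is improved by differentiating, symmetrizing the transport term to avoid derivative loss, and absorbing the remainder into the coercive term coming from $\partial_t A\leq 0$ (with the loss of symmetry in the difference estimate forcing $p=2$, as you note). The only small inaccuracies are that the final gain comes from the factor $\overline{C}\,\overline{\eps}\leq\overline{\eps}^{7/8}$ and the $\langle s\rangle^{-3}$ decay of the density (via the interpolated $L^2_k$ bounds \eqref{pam21}) rather than from a $\sqrt{\delta}$ smallness, and \eqref{ide7} enters the density estimate rather than the energy absorption step; neither affects the correctness of the plan.
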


This proposition plays a key role in the proof of Theorem \ref{MainThm} (i); we will prove it in section \ref{pam1} below. We note that the constant $\overline{C}$ will be taken much larger than other structural constants such as the constants $C_0^\ast$ in Lemma \ref{GreenF}, $C_0$ in Lemma \ref{g*}, $C_1$ in Lemma \ref{LemN}, and $C_2$ in Proposition \ref{MainBootstrapIm1}. 

\subsection{The final state problem}

We also prove quantitative estimates going backwards in time. Assume that $T_1\leq T_2\in[0,\infty)$ and $g\in C([T_1,T_2]:\mathcal{G}_{d'}^{\lambda_0/2,1/3}(\T^d\times\R^d))$ is a solution of the system \eqref{NVP3}. For any $t\in[T_1,T_2]$ we use \eqref{NVP4} and integrate for $s\in[t,T_2]$, thus
\begin{equation*}
\begin{split}
-\widehat{g}(t,k,\xi)+\widehat{g}(T_2,k,\xi)&+\int_t^{T_2}\widehat{\rho}(s,k)\widehat{M_0}(\xi-sk)\frac{k\cdot (\xi-sk)}{|k|^2}\,ds\\
&+\frac{1}{(2\pi)^d}\sum_{l\in\Z^d}\int_t^{T_2}\widehat{\rho}(s,l)\widehat{g}(s,k-l,\xi-sl)\frac{l\cdot(\xi-sk)}{|l|^2}\,ds=0.
\end{split}
\end{equation*}
Letting $\xi=tk$ it follows that
\begin{equation}\label{infin0}
\begin{split}
&\widehat{\rho}(t,k)-\int_t^{T_2}\widehat{\rho}(s,k)\widehat{M_0}((t-s)k)(t-s)\,ds=\widehat{\mathcal{N}'}(t,k),\\
&\widehat{\mathcal{N}'}(t,k):=\widehat{g}(T_2,k,tk)+\frac{1}{(2\pi)^d}\sum_{l\in\Z^d\setminus\{0\}}\int_t^{T_2}\widehat{\rho}(s,l)\widehat{g}(s,k-l,tk-sl)\frac{(t-s)l\cdot k}{|l|^2}\,ds.
\end{split}
\end{equation}

We would like to find an integral formula for $\rho$ in terms of $\mathcal{N}'$. For this we define $\rho_1(t)=\rho(t)$ for $t\in[T_1,T_2]$ and $\rho_1(t)=0$ if $t\notin[T_1,T_2]$, and then we define $\mathcal{N}'_1$ by
\begin{equation}\label{infin1}
\widehat{\mathcal{N}'_1}(t,k):=\widehat{\rho_1}(t,k)-\int_t^\infty\widehat{\rho_1}(s,k)\widehat{M_0}((t-s)k)(t-s)\,ds.
\end{equation}
For $\tau\in\R$ and $k\in\Z^d\setminus\{0\}$ we define the time Fourier transforms
\begin{equation*}
\widetilde{\rho_1}(\tau,k):=\int_\R\widehat{\rho_1}(t,k)e^{-it\tau}\,dt,\qquad \widetilde{\mathcal{N}'_1}(\tau,k):=\int_\R\widehat{\mathcal{N}'_1}(t,k)e^{-it\tau}\,dt.
\end{equation*}
It follows from \eqref{infin1} that for any $\tau\in\R$ and $k\in\Z^d\setminus\{0\}$ we have
\begin{equation*}
\widetilde{\mathcal{N}'_1}(\tau,k)=\widetilde{\rho_1}(\tau,k)(1+L(-\tau,-k)),
\end{equation*}
where $L$ is defined as in \eqref{gu14}. Therefore
\begin{equation*}
\widetilde{\rho_1}(\tau,k)=\widetilde{\mathcal{N}'_1}(\tau,k)-\widetilde{\mathcal{N}'_1}(\tau,k)L'(-\tau,-k),\qquad\tau\in\R,\,k\in\Z^d\setminus\{0\},
\end{equation*}
where $L'$ is defined as in \eqref{gu18}. We take the inverse Fourier transform and recall the definition \eqref{gu20.5} of the Green's functions $\widehat{G}$, so
\begin{equation*}
\widehat{\rho_1}(t,k)=\widehat{\mathcal{N}'_1}(t,k)-\int_\R\widehat{\mathcal{N}'_1}(s,k)\widehat{G}(s-t,-k)\,ds.
\end{equation*}
Recall that $\widehat{G}$ is supported in $[0,\infty)\times(\Z^d\setminus\{0\})$ (see Lemma \ref{GreenF}), $\rho_1(t)=\rho(t)$, $\mathcal{N}'_1(t)=\mathcal{N}'(t)$ for $t\in[T_1,T_2]$, and $\mathcal{N}'_1(t)=0$ for $t>T_2$. We can thus derive our main identity
\begin{equation}\label{infin2}
\widehat{\rho}(t,k)=\widehat{\mathcal{N}'}(t,k)-\int_t^{T_2}\widehat{\mathcal{N}'}(s,k)\widehat{G}(s-t,-k)\,ds\qquad\text{ for }t\in[T_1,T_2]\text{ and }k\in\Z^d\setminus\{0\}.
\end{equation}
This formula serves as the main substitute of the identity \eqref{pam27.1} in the analysis of the final state problem.

\subsubsection{The second bootstrap proposition} 

To understand the evolution backwards in time we need to work with a new set of weights, which are increasing in time. We define these weights as in \eqref{gu1}: for $t\in [0,\infty)$, $r\in\R_+$, $k,\xi\in\R^d$ we define
\begin{equation}\label{infin3}
\begin{split}
&\lambda^\sharp(t,r):=\lambda_1\langle r\rangle^{1/3}-\delta(1+t)^{-\delta}\langle r\rangle^{1/3}-\delta\big(1+t\langle r\rangle^{-2/3}\big)^{-\delta}\langle r\rangle^{1/3},\\
&A^\sharp(t,k,\xi):=e^{\lambda^\sharp(t,|k,\xi|)},
\end{split}
\end{equation}
where $\lambda_1\in[0.5\lambda_0,0.9\lambda_0]$ and $\delta=\lambda_0/200$. We will use these weights for certain values of $\lambda_1$.

Our second bootstrap argument is also based on controlling simultaneously an energy functional for $g$ and a decaying norm for $\rho$. As in subsection \ref{weightsdef} we define
\begin{equation}\label{infin4}
\begin{split}
&X^\sharp_{[T_1,T_2]}:=\Big\{h\in C([T_1,T_2]:\mathcal{G}^{\lambda_0/2,1/3}_{d'}(\T^d\times\R^d)):\\
&\qquad\|h\|_{X^\sharp_{[T_1,T_2]}}:=\sup_{t\in[T_1,T_2]]}\Big\{\sum_{k\in \mathbb{Z}^d}\int_{\R^d}A^\sharp(t,k,\xi)^2\Big [\sum_{|a|\leq d'}\big|D^a_\xi\widehat{h}(t,k,\xi)\big|^2\Big ]\,d\xi\Big\}^{1/2}<\infty\Big\}.
\end{split}
\end{equation}
For any $h\in X^\sharp_{[T_1,T_2]}$, $t\in[T_1,T_2]$, and $p\in[0,2]$ we define
\begin{equation}\label{infin5}
\begin{split}
\mathcal{E}^{\sharp,p}_{h}(t)&:=\sum_{k\in \mathbb{Z}^d}\int_{\R^d}\langle k,\xi\rangle^{-2p}A^\sharp(t,k,\xi)^2\Big\{\sum_{|a|\leq d'}\big|D^a_\xi\widehat{h}(t,k,\xi)\big|^2\Big\}\,d\xi,\\
\mathcal{Z}^{\sharp,p}_{h}(t)&:=\sup_{k\in \mathbb{Z}^d\setminus \{0\}}\langle k,tk\rangle^{-p}A^\sharp(t,k,tk)\big|\widehat{h}(t,k,tk)\big||k|^{-1/2}.
\end{split}
\end{equation}

Our second main bootstrap proposition is the following: 

\begin{proposition}\label{MainBootstrapInfin}
Assume $T_1\leq T_2\in[0,\infty)$ and $g_1,g_2\in X^\sharp_{[T_1,T_2]}$ are real-valued solutions of the system \eqref{NVP3} satisfying $\widehat{g_i}(t,0,0)=0$ for any $t\in[T_1,T_2]$ and $i\in\{1,2\}$. Assume that $\theta_0\leq\overline{\theta}\ll 1$ and
\begin{equation}\label{infin6}
\|g_i(T_2)\|_{{\mathcal{G}_{d'}^{\lambda_1+4\delta,1/3}}}\leq\overline\theta,\qquad \|(g_1-g_2)(T_2)\|_{\mathcal{G}_{d'}^{\lambda_1+4\delta,1/3}}\leq\theta_0.
\end{equation}
for $i\in\{1,2\}$, where $\lambda_1\in[0.5\lambda_0,0.9\lambda_0]$. Assume that 
\begin{equation}\label{infin7}
\begin{split}
&\mathcal{E}^{\sharp,0}_{g_i}(t)+\big[\mathcal{Z}_{g_i}^{\sharp,0}(t)\langle t\rangle^{6d}\big]^2\leq 4\overline{B}^2\overline{\theta}^2,\\
&\mathcal{E}^{\sharp,2}_{g_1-g_2}(t)+\big[\mathcal{Z}_{g_1-g_2}^{\sharp,2}(t)\langle t\rangle^{6d}\big]^2\leq 4\overline{B}^2\theta_0^2,
\end{split}
\end{equation}
for any $t\in[T_1,T_2]$ and $i\in\{1,2\}$, where $\overline{B}\leq\overline{\theta}^{-1/8}$ is a sufficiently large constant. Then for any $t\in[T_1,T_2]$ and $i\in\{1,2\}$ we have the improved bounds
\begin{equation}\label{infin8}
\begin{split}
&\mathcal{E}^{\sharp,0}_{g_i}(t)+\big[\mathcal{Z}_{g_i}^{\sharp,0}(t)\langle t\rangle^{6d}\big]^2\leq \overline{B}^2\overline{\theta}^2,\\
&\mathcal{E}^{\sharp,2}_{g_1-g_2}(t)+\big[\mathcal{Z}_{g_1-g_2}^{\sharp,2}(t)\langle t\rangle^{6d}\big]^2\leq \overline{B}^2\theta_0^2.
\end{split}
\end{equation}
\end{proposition}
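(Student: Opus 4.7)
The plan is to run a bootstrap argument parallel to that of Proposition \ref{MainBootstrap}, with the forward density identity \eqref{pam27.1} replaced by the backward identity \eqref{infin2} and the decreasing weights $A$ replaced by the increasing-in-time weights $A^\sharp$. Three estimates must be established: control of $\mathcal{Z}^{\sharp,0}_{g_i}$ via \eqref{infin2} and Lemma \ref{GreenF}; control of the energy $\mathcal{E}^{\sharp,0}_{g_i}$ via the Fourier-space equation \eqref{NVP4}; and the corresponding bounds for $g_1-g_2$ in the norms with the extra weight $\langle k,\xi\rangle^{-4}$ coming from $p=2$ in \eqref{infin5}. A preliminary observation is that $\lambda^\sharp(T_2,r)\leq \lambda_1\langle r\rangle^{1/3}$, so the hypothesis \eqref{infin6} supplies $\mathcal{E}^{\sharp,0}_{g_i}(T_2)\leq C\overline{\theta}^2$ and $\mathcal{E}^{\sharp,2}_{g_1-g_2}(T_2)\leq C\theta_0^2$ with a Gevrey margin of order $e^{4\delta\langle\cdot\rangle^{1/3}}$ available.

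First I would establish the $Z$-norm bound. Combining \eqref{infin2} with the Green's function bound \eqref{pam27.2} yields
\begin{equation*}
|\widehat{\rho}(t,k)|\leq |\widehat{\mathcal{N}'}(t,k)| + C_0^\ast\int_t^{T_2}|\widehat{\mathcal{N}'}(s,k)|\,e^{-0.95\lambda_0|(s-t)k|^{1/3}}\,ds.
\end{equation*}
Multiplying by $A^\sharp(t,k,tk)|k|^{-1/2}\langle t\rangle^{6d}$ and splitting $\widehat{\mathcal{N}'}$ into its linear piece $\widehat{g}(T_2,k,tk)$ (controlled directly by \eqref{infin6}) and the bilinear integral on $[t,T_2]$, the main work reduces to weight inequalities of the form
\begin{equation*}
\sum_{l\in\Z^d\setminus\{0\}}\int_t^{T_2}\frac{A^\sharp(t,k,tk)}{A^\sharp(s,l,sl)\,A^\sharp(s,k-l,tk-sl)}\,e^{-0.95\lambda_0|(s-t)k|^{1/3}}\,\frac{(s-t)|l\cdot k|}{|l|^2}\,ds\lesssim \langle t\rangle^{-6d},
\end{equation*}
which are the backward analogues of the identities behind \eqref{ide7}. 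These are to be proved by a Gevrey-type triangle inequality on the exponents combined with absorption against the Green's function decay, after a standard reaction/transport decomposition based on the relative sizes of $l$, $k-l$, and $tk-sl$.

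The energy estimate is obtained by differentiating $\mathcal{E}^{\sharp,0}_{g_i}$ using \eqref{NVP4} and integrating backward from $T_2$ to $t$:
\begin{equation*}
\mathcal{E}^{\sharp,0}_{g_i}(t) + 2\int_t^{T_2}\sum_{k\in\Z^d}\int_{\R^d}A^\sharp(s,k,\xi)\,\partial_s A^\sharp(s,k,\xi)\sum_{|a|\leq d'}|D^a_\xi\widehat{g_i}(s,k,\xi)|^2\,d\xi\,ds = \mathcal{E}^{\sharp,0}_{g_i}(T_2) + \mathcal{I}(t),
\end{equation*}
where $\mathcal{I}(t)$ collects the bilinear contributions from \eqref{NVP4}. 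Since $\partial_s A^\sharp\geq 0$, the second term on the left is a positive Cauchy-Kovalevskaya gain, playing exactly the role of the analogous (opposite-sign) term in the forward argument. It is this gain that absorbs the borderline pieces of $\mathcal{I}(t)$ produced by the convolution in \eqref{NVP4}, after they are controlled using bilinear symbol bounds for $A^\sharp$ of the type \eqref{ide5} and commutator estimates of the type \eqref{gu6.5}. The non-borderline parts of $\mathcal{I}(t)$ are absorbed using the $Z$-norm bound on $\rho$ together with the factor $\langle s\rangle^{-6d}$ from \eqref{infin7}. The difference estimates are obtained in parallel by subtracting the equations for $g_1$ and $g_2$; the asymmetry of the resulting bilinear term is compensated by the extra $\langle k,\xi\rangle^{-2}$ weight built into $\mathcal{E}^{\sharp,2}$ and $\mathcal{Z}^{\sharp,2}$.

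The main obstacle will be establishing the backward-weight analogues of the key symbol inequalities \eqref{ide5}, \eqref{ide7} and the commutator bound \eqref{gu6.5} for $A^\sharp$. The algebraic structure of $\lambda^\sharp$ mirrors that of $\lambda$ with reversed signs on the time-dependent terms, so the monotonicity exploited in the forward argument translates cleanly to the backward setting; nonetheless one must carefully redo the pointwise analysis near the resonant region $\xi\approx sk$ and verify that the positive CK gain $\partial_s A^\sharp>0$ provides enough control to absorb the same type of borderline nonlinearities. Once these lemmas are in place, Proposition \ref{MainBootstrapInfin} follows from a standard continuity argument, choosing $\overline{B}$ large and $\overline{\theta}$ small in terms of the structural constants $d,\lambda_0,\vartheta$.
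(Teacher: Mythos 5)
Your proposal follows essentially the same route as the paper: the backward density identity \eqref{infin2} combined with the Green's function bound \eqref{pam27.2} gives the $Z^{\sharp}$-norm control (the paper's Lemma \ref{LemInfin} and Proposition \ref{BootstrapInfin1}, whose core is exactly the backward analogue \eqref{infin17}/\eqref{infin29} of \eqref{pam7}/\eqref{pam16} that you flag, using the second term of $\lambda^\sharp$ in the resonant region), while the energy estimate integrates \eqref{NVP4} backward from $T_2$ so that the Cauchy--Kovalevskaya term $\dot{A^\sharp}\geq 0$ appears with a favorable sign and absorbs the symmetrized transport term, with the bilinear and commutator bounds \eqref{gu6}, \eqref{gu6.5} holding identically for $A^\sharp$. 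The only (harmless) organizational difference is that the paper bounds $\mathcal{N}'$ first and then convolves with $\widehat{G}$, rather than folding the Green's function decay into the bilinear weight inequality as in your display.
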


\subsection{Some lemmas}\label{TotalWeights} 

We prove first some properties of the main weights $A$ and $A^\sharp$.

\begin{lemma}\label{lambdaLem}
(i) Assume that $\lambda_1\in[\lambda_0/2,\lambda_0]$. For any $t,r\in[0,\infty)$ we have
\begin{equation}\label{gu3.1}
\lambda(t,r),\lambda^\sharp(t,r)\in[(\lambda_1-2\delta)\langle r\rangle^{1/3},(\lambda_1+2\delta)\langle r\rangle^{1/3}],
\end{equation}
\begin{equation}\label{gu3.3}
-\partial_t\lambda(t,r)=\partial_t\lambda^\sharp(t,r)=\delta^2(1+t)^{-\delta-1}\langle r\rangle^{1/3}+\delta^2\big(1+t\langle r\rangle^{-2/3}\big)^{-\delta-1}\langle r\rangle^{-1/3},
\end{equation}
\begin{equation}\label{gu3.2}
\begin{split}
\partial_r\lambda(t,r)&=\big[\lambda_1+\delta(1+t)^{-\delta}\big](r/3)\langle r\rangle^{-5/3}\\
&+\delta\big(1+t\langle r\rangle^{-2/3}\big)^{-\delta-1}(r/3)\langle r\rangle^{-5/3}\big[1+t\langle r\rangle^{-2/3}(1+2\delta)\big],\\
\partial_r\lambda^\sharp(t,r)&=\big[\lambda_1-\delta(1+t)^{-\delta}\big](r/3)\langle r\rangle^{-5/3}\\
&-\delta\big(1+t\langle r\rangle^{-2/3}\big)^{-\delta-1}(r/3)\langle r\rangle^{-5/3}\big[1+t\langle r\rangle^{-2/3}(1+2\delta)\big],\\
\end{split}
\end{equation}
In particular the functions $\lambda$ and $\lambda^\sharp$ are increasing in $r$ and
\begin{equation}\label{gu2}
\partial_r\lambda(t,r),\partial_r\lambda^\sharp(t,r)\in[(\lambda_1/3-\delta)r\langle r\rangle^{-5/3},(\lambda_1/3+\delta)r\langle r\rangle^{-5/3}]\,\,\text{ for any }\,\,t,r\in [0,\infty). 
\end{equation}
Moreover, if $a\leq b\in\R_+$ and $x,y\in\R_+$ then
\begin{equation}\label{gu4}
\begin{split}
&\lambda(a,r)-\lambda(b,r)=\lambda^\sharp(b,r)-\lambda^\sharp(a,r)\\
&=\delta^2\int_a^b\big[(1+u)^{-\delta-1}\langle r\rangle^{1/3}+\big(1+u\langle r\rangle^{-2/3}\big)^{-\delta-1}\langle r\rangle^{-1/3}\big]\,du.
\end{split}
\end{equation}

(ii) If $k,\xi,k',\xi'\in\R^d$ and $t\in[0,\infty)$ then
\begin{equation}\label{gu6}
\begin{split}
A(t,k+k',\xi+\xi')&\leq A(t,k,\xi)A(t,k',\xi')e^{-(\lambda_1/4)\min(\langle k,\xi\rangle,\langle k',\xi'\rangle)^{1/3}},\\
A^\sharp(t,k+k',\xi+\xi')&\leq A^\sharp(t,k,\xi)A^\sharp(t,k',\xi')e^{-(\lambda_1/4)\min(\langle k,\xi\rangle,\langle k',\xi'\rangle)^{1/3}}.
\end{split}
\end{equation}
Moreover, if $k,\xi,k',\xi'\in\R^d$, $t\in[0,\infty)$, and $|k',\xi'|\leq|k,\xi|/8$ then
\begin{equation}\label{gu6.5}
\begin{split}
\frac{|A(t,k+k',\xi+\xi')-A(t,k,\xi)|}{A(t,k,\xi)A(t,k',\xi')}&\lesssim e^{-(\lambda_1/4)\langle k',\xi'\rangle^{1/3}}\langle k,\xi\rangle^{-2/3},\\
\frac{|A^\sharp(t,k+k',\xi+\xi')-A^\sharp(t,k,\xi)|}{A^\sharp(t,k,\xi)A^\sharp(t,k',\xi')}&\lesssim e^{-(\lambda_1/4)\langle k',\xi'\rangle^{1/3}}\langle k,\xi\rangle^{-2/3}.
\end{split}
\end{equation}
\end{lemma}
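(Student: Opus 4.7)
For part (i), I will proceed by direct unwinding of the definition \eqref{gu1}. The bounds \eqref{gu3.1} are immediate from the fact that the two perturbative terms in $\lambda$ are nonnegative (resp.\ in $\lambda^\sharp$ nonpositive) and each bounded by $\delta\langle r\rangle^{1/3}$. The identities \eqref{gu3.3} and \eqref{gu3.2} come from term-by-term differentiation, using $\partial_r\langle r\rangle^{1/3}=(r/3)\langle r\rangle^{-5/3}$ and $\partial_r\langle r\rangle^{-2/3}=-(2r/3)\langle r\rangle^{-8/3}$; the regrouping identity $(1+u)^{-\delta}=(1+u)^{-\delta-1}(1+u)$ puts the third-term $r$-derivative into the precise form stated in \eqref{gu3.2}. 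To sharpen these into the interval bound \eqref{gu2} I use the elementary estimate $(1+u)^{-\delta-1}[1+u(1+2\delta)]=(1+u)^{-\delta}+2\delta u(1+u)^{-\delta-1}\le 1+2\delta$ for $u\ge 0$, which shows the "third-term" contribution to $\partial_r\lambda$ is nonnegative and at most $\delta(1+2\delta)(r/3)\langle r\rangle^{-5/3}$. Finally \eqref{gu4} follows from the observation that $\lambda(t,r)+\lambda^\sharp(t,r)=2\lambda_1\langle r\rangle^{1/3}$ is $t$-independent, hence $\partial_t\lambda^\sharp=-\partial_t\lambda$, and then integrating \eqref{gu3.3}.

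For the bilinear estimate \eqref{gu6} in part (ii), without loss of generality take $P:=|k,\xi|\ge Q:=|k',\xi'|$, and set $R:=|k+k',\xi+\xi'|$, so $R\le P+Q$. By the monotonicity of $\lambda(t,\cdot)$ established in (i), $\lambda(t,R)\le\lambda(t,P+Q)$. Integrating the upper bound in \eqref{gu2} and using $3r\langle r\rangle^{-5/3}=\partial_r\langle r\rangle^{1/3}$, I get $\lambda(t,P+Q)-\lambda(t,P)\le(\lambda_1+3\delta)\bigl[\langle P+Q\rangle^{1/3}-\langle P\rangle^{1/3}\bigr]$. Concavity of $x\mapsto x^{1/3}$ together with the elementary bounds $\langle P+Q\rangle-\langle P\rangle\lesssim Q$ and (for $Q\le P$) $Q\langle P\rangle^{-2/3}\le\langle Q\rangle^{1/3}$ then produces $\langle P+Q\rangle^{1/3}-\langle P\rangle^{1/3}\le(1/3)\langle Q\rangle^{1/3}$. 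Combined with the trivial lower bound $\lambda(t,Q)\ge\lambda_1\langle Q\rangle^{1/3}$ (or $\lambda^\sharp(t,Q)\ge(\lambda_1-2\delta)\langle Q\rangle^{1/3}$ via \eqref{gu3.1}), this yields $\lambda(t,R)-\lambda(t,P)-\lambda(t,Q)\le -(2\lambda_1/3-O(\delta))\langle Q\rangle^{1/3}\le -(\lambda_1/4)\langle Q\rangle^{1/3}$, using $\delta\le\lambda_1/100$; exponentiating gives \eqref{gu6}. The case of $A^\sharp$ is identical, with the same gain.

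For the commutator estimate \eqref{gu6.5}, the hypothesis $Q\le P/8$ confines $R$ to $[7P/8,9P/8]$, so \eqref{gu2} yields the pointwise bound $|\partial_r\lambda(t,r)|\lesssim\langle r\rangle^{-2/3}\lesssim\langle P\rangle^{-2/3}$ on that range. The mean value theorem then gives $|\lambda(t,R)-\lambda(t,P)|\lesssim\langle P\rangle^{-2/3}Q$; a second mean value argument applied to $s\mapsto e^s$, combined with the bound $\max\bigl(e^{\lambda(t,R)},e^{\lambda(t,P)}\bigr)\big/\bigl(e^{\lambda(t,P)}e^{\lambda(t,Q)}\bigr)\le 2e^{-(2\lambda_1/3-O(\delta))\langle Q\rangle^{1/3}}$ (which comes from \eqref{gu6} in one case and direct inspection $\lambda(t,Q)\ge\lambda_1\langle Q\rangle^{1/3}$ in the other), produces the left side of \eqref{gu6.5} bounded by $\lesssim\langle P\rangle^{-2/3}\,Q\,e^{-(2\lambda_1/3-O(\delta))\langle Q\rangle^{1/3}}$. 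Absorbing the linear prefactor $Q$ into the exponential at the cost of a small constant in the exponent (easy since $\delta\ll\lambda_1$) yields the claimed bound; the $A^\sharp$ version is again identical.

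The main obstacle is not any single step but the need to choose constants so that the inequalities close with room to spare: the slack between $\lambda_1$ and $\delta$ is used twice, once to pass from the natural loss $2\lambda_1/3-O(\delta)$ to the clean $\lambda_1/4$ in \eqref{gu6}, and a second time to absorb the polynomial factor $Q$ in \eqref{gu6.5} into the exponential decay. The main conceptual point, already visible in part (i), is that the third term in \eqref{gu1} — which breaks the product structure \eqref{ide4} — remains benign for both subadditivity and derivative bounds thanks to the uniform control $(1+u)^{-\delta-1}[1+u(1+2\delta)]\le 1+2\delta$, and this is what makes the sharp Gevrey-$3$ estimates downstream possible.
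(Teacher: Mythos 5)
Your proof is correct and follows essentially the same route as the paper's: part (i) by direct differentiation, the subadditivity \eqref{gu6} by integrating the derivative bound \eqref{gu2} over the increment and comparing with $\lambda(t,\cdot)\geq\lambda_1\langle\cdot\rangle^{1/3}$, and \eqref{gu6.5} by a mean-value argument on the exponent. The paper's own proof is merely terser; your explicit bookkeeping of the slack (the natural gain $2\lambda_1/3-O(\delta)$ versus the stated $\lambda_1/4$, used to absorb the polynomial factor) is exactly what its "follows easily" steps rely on.
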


\begin{proof} (i) The identities and the bounds follow directly from the definitions.

(ii) In view of \eqref{gu2}, to prove the bounds in the first line of \eqref{gu6} it suffices to show that if $t,y,b\in[0,\infty)$ and $b\leq y$ then
\begin{equation}\label{gu7}
\lambda(t,y+b)\leq \lambda(t,y)+\lambda(t,b)-(\lambda_1/4)\langle b\rangle^{1/3}.
\end{equation}
To prove this we notice that $\partial_r\lambda(t,r)\leq (\lambda_1/2)\langle r\rangle^{-2/3}$ for any $r\geq 0$, due to \eqref{gu2}, thus
\begin{equation*}
\lambda(t,y+b)-\lambda(t,y)\leq (\lambda_1/2)\langle y\rangle^{-2/3}b\leq (\lambda_1/2)\langle b\rangle^{1/3}\leq\lambda(t,b)-(\lambda_1/4)\langle b\rangle^{1/3}
\end{equation*}
as desired. The bounds in the second line of \eqref{gu6} follow in a similar way.

To prove the bounds in the first line of \eqref{gu6.5} it suffices to prove that 
\begin{equation}\label{gu7.5}
e^{\lambda(t,y+b)-\lambda(t,y)}-1\lesssim e^{(3\lambda_1/5)\langle b\rangle^{1/3}}\langle y\rangle^{-2/3},
\end{equation}
provided that $t,y,b\in[0,\infty)$ and $b\leq y$. This follows easily since $\lambda(t,y+b)-\lambda(t,y)\leq (2\lambda_1/5)\langle y\rangle^{-2/3}b$. The bounds in the second line of \eqref{gu6.5} follow in a similar way.
\end{proof}

We prove now two pointwise estimates in the Fourier space.

\begin{lemma}\label{g*}
Assume $T_1\leq T_2\in[0,\infty)$, $h_1\in X_{[T_1,T_2]}$, $h_2\in X^\sharp_{[T_1,T_2]}$. For any $p\in[0,2]$, $t\in[T_1,T_2]$, and $k\in\Z^d$ let
\begin{equation}\label{gu10}
\begin{split}
H_p(h_1)(t,k)&:=\sup_{\xi\in\R^d}\big|\langle k,\xi\rangle^{-p}A(t,k,\xi)\widehat{h_1}(t,k,\xi)\big|,\\
H^{\sharp}_p(h_2)(t,k)&:=\sup_{\xi\in\R^d}\big|\langle k,\xi\rangle^{-p}A^\sharp(t,k,\xi)\widehat{h_2}(t,k,\xi)\big|.
\end{split}
\end{equation}
Then there is a constant $C_0=C_0(d)$ such that 
\begin{equation}\label{gu11}
\begin{split}
|H_p(h_1)(t,k)|^2&\leq C_0^2\int_{\R^d}\langle k,\xi\rangle^{-2p}A(t,k,\xi)^2\Big\{\sum_{|a|\leq d'}\big|D^a_\xi\widehat{h_1}(t,k,\xi)\big|^2\Big\}\,d\xi,\\
|H_p^\sharp(h_2)(t,k)|^2&\leq C_0^2\int_{\R^d}\langle k,\xi\rangle^{-2p}A^\sharp(t,k,\xi)^2\Big\{\sum_{|a|\leq d'}\big|D^a_\xi\widehat{h_2}(t,k,\xi)\big|^2\Big\}\,d\xi.
\end{split}
\end{equation}
\end{lemma}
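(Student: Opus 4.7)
The bound is essentially a Sobolev embedding at each fixed $k$. Fix $k\in\Z^d$, set
\[
F_k(\xi):=\langle k,\xi\rangle^{-p}A(t,k,\xi)\widehat{h_1}(t,k,\xi),\qquad \xi\in\R^d,
\]
so that $|H_p(h_1)(t,k)|=\|F_k\|_{L^\infty_\xi}$. Since $d'=\lfloor d/2+1\rfloor>d/2$, the standard Sobolev embedding $H^{d'}(\R^d)\hookrightarrow L^\infty(\R^d)$ gives a constant $C(d)$ with
\[
\|F_k\|_{L^\infty_\xi}^2\le C(d)^2\sum_{|a|\le d'}\int_{\R^d}|D^a_\xi F_k(\xi)|^2\,d\xi.
\]
So it suffices to bound each $|D^a_\xi F_k|$ pointwise by the integrand on the right-hand side of \eqref{gu11}.

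By the Leibniz rule, for $|a|\le d'$,
\[
D^a_\xi F_k(\xi)=\sum_{a_1+a_2+a_3=a}c_{a_1,a_2,a_3}\bigl(D^{a_1}_\xi\langle k,\xi\rangle^{-p}\bigr)\bigl(D^{a_2}_\xi A(t,k,\xi)\bigr)\bigl(D^{a_3}_\xi\widehat{h_1}(t,k,\xi)\bigr).
\]
The first factor satisfies $|D^{a_1}_\xi\langle k,\xi\rangle^{-p}|\lesssim_{d,p}\langle k,\xi\rangle^{-p}$ for $p\in[0,2]$ and $|a_1|\le d'$. For the weight factor, since $A(t,k,\xi)=e^{\lambda(t,|k,\xi|)}$ and $\lambda(t,r)\lesssim\langle r\rangle^{1/3}$ with $|\partial_r\lambda(t,r)|\lesssim r\langle r\rangle^{-5/3}$ (cf.\ \eqref{gu2}), a direct induction using the chain rule in $\xi$ gives bounds of the form $|D^{a_2}_\xi\lambda(t,|k,\xi|)|\lesssim_{d',\lambda_0}\langle k,\xi\rangle^{-\min(|a_2|,1)\cdot 2/3}$ for $1\le |a_2|\le d'$, hence
\[
|D^{a_2}_\xi A(t,k,\xi)|\lesssim_{d',\lambda_0}A(t,k,\xi)\qquad\text{for }|a_2|\le d'.
\]
Combining these two pointwise estimates yields
\[
|D^a_\xi F_k(\xi)|\lesssim_{d,\lambda_0}\langle k,\xi\rangle^{-p}A(t,k,\xi)\sum_{|b|\le d'}|D^b_\xi\widehat{h_1}(t,k,\xi)|,
\]
and squaring and integrating in $\xi$ concludes the first estimate in \eqref{gu11}.

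The second estimate in \eqref{gu11} is proved identically: the weight $\lambda^\sharp$ differs from $\lambda$ only in the signs of the two $\delta$-terms, so $|\partial_r\lambda^\sharp(t,r)|\lesssim r\langle r\rangle^{-5/3}$ and $|D^{a_2}_\xi A^\sharp(t,k,\xi)|\lesssim A^\sharp(t,k,\xi)$ for $|a_2|\le d'$ by the same argument, after which Sobolev embedding and the Leibniz expansion yield the bound. The only nontrivial verification is the pointwise chain-rule bound for derivatives of $A$ and $A^\sharp$; this is routine given the explicit formulas \eqref{gu3.2} but is the one place care is needed, since a naive estimate could lose $\langle k,\xi\rangle^{1/3}$ factors from differentiating the exponent. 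The key observation is that each $\xi$-derivative of $\lambda(t,|k,\xi|)$ produces a factor that is uniformly bounded (in fact decaying in $\langle k,\xi\rangle$), so no extra polynomial growth is generated and the weight $A$ can be factored cleanly in front.
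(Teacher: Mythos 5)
Your proof is correct and follows essentially the same route as the paper: Sobolev embedding $H^{d'}(\R^d)\hookrightarrow L^\infty(\R^d)$ in $\xi$ at fixed $k$, combined with the pointwise bounds $|D^{a}_\xi A(t,k,\xi)|\lesssim A(t,k,\xi)$ and $|D^{a}_\xi\langle k,\xi\rangle^{-p}|\lesssim\langle k,\xi\rangle^{-p}$ for $|a|\leq d'$. Your additional verification that each $\xi$-derivative of the exponent $\lambda(t,|k,\xi|)$ is uniformly bounded (indeed decaying) is exactly the point the paper leaves as "easy to see".
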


\begin{proof} 
By Sobolev embedding, 
\begin{equation*}
|H_p(h_1)(t,k)|^2\lesssim_d \int_{\R^d}\sum_{|a|\leq d'}\big|D^a_\xi[\langle k,\xi\rangle^{-p}A(t,k,\xi)\widehat{h_1}(t,k,\xi)]\big|^2\,d\xi.
\end{equation*}
Using the definition \eqref{gu1} it is easy to see that
\begin{equation*}
|D^a_\xi A(t,k,\xi)|\lesssim A(t,k,\xi)\qquad\text{ for any }t\in[T_1,T_2],\,k,\xi\in\R^d,\,|a|\leq d'.
\end{equation*}
The bounds in the first line of \eqref{gu11} follow. The bounds in the second line follow similarly.
\end{proof}

\section{Proof of Proposition \ref{MainBootstrap}}\label{pam1}

In this section we prove our first main bootstrap proposition.

\subsection{Improved control on the $Z$-norm} We prove first the improved estimates \eqref{boot3} on the quantities $\mathcal{Z}_{g_i}^{0}$ and $\mathcal{Z}_{g_1-g_2}^{2}$, using the identities \eqref{pam27.1}. We start with estimates on the nonlinearities $\mathcal{N}_1$ and $\mathcal{N}_2$.

\begin{lemma}\label{LemN}
Assume that $g_1,g_2\in X_{[T_1,T_2]}$ are solutions satisfying the hypothesis in Proposition \ref{MainBootstrap} (recall $\beta=1/2$), let $\widehat{\rho_i}(t,k):=\widehat{g_i}(t,k,tk)$, and define $\mathcal{N}_i$ as in \eqref{NVP6}. Then
\begin{equation}\label{pam2}
\begin{split}
\sup_{t\in[T_1,T_2],\,k\in\Z^d\setminus\{0\}}A(t,k,tk)|\widehat{\mathcal{N}_i}(t,k)||k|^{-\beta}\langle t\rangle^{6d}&\leq C_1\overline{\eps},\\
\sup_{t\in[T_1,T_2],\,k\in\Z^d\setminus\{0\}}\langle k,tk\rangle^{-2}A(t,k,tk)|(\widehat{\mathcal{N}_1}-\widehat{\mathcal{N}_2})(t,k)||k|^{-\beta}\langle t\rangle^{6d}&\leq C_1\eps_0,
\end{split}
\end{equation}
for $i\in\{1,2\}$ and a constant $C_1=C_1(d,\lambda_0)$ sufficiently large.
\end{lemma}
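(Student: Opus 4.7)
The plan is to split $\widehat{\mathcal{N}_i}(t,k) = \widehat{\mathcal{N}_i^{\mathrm{in}}}(t,k) + \widehat{\mathcal{N}_i^{\mathrm{bil}}}(t,k)$, where $\widehat{\mathcal{N}_i^{\mathrm{in}}}(t,k) := \widehat{g_i}(T_1,k,tk)$ is the initial data contribution and $\widehat{\mathcal{N}_i^{\mathrm{bil}}}$ is the time integral. For the initial data piece, Sobolev embedding in $\xi$ (the mechanism behind Lemma \ref{g*}, but applied at time $T_1$ with weight $e^{(\lambda_1+4\delta)\langle k,\xi\rangle^{1/3}}$) combined with the hypothesis $\|g_i(T_1)\|_{\mathcal{G}_{d'}^{\lambda_1+4\delta,1/3}}\le \overline\eps$ yields a pointwise bound $|\widehat{g_i}(T_1,k,tk)| \lesssim \overline\eps\, e^{-(\lambda_1+4\delta)\langle k,tk\rangle^{1/3}}$. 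Since \eqref{gu3.1} gives $A(t,k,tk) \le e^{(\lambda_1+2\delta)\langle k,tk\rangle^{1/3}}$, we obtain a spare factor $e^{-2\delta\langle k,tk\rangle^{1/3}}$ which easily absorbs $\langle t\rangle^{6d}|k|^{-1/2}$ for $k\ne 0$, $t\ge 0$. The difference version follows identically from $\|(g_1-g_2)(T_1)\|_{\mathcal{G}_{d'}^{\lambda_1+4\delta,1/3}}\le \eps_0$.

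For the bilinear integral term, I will apply the splitting identity $(k,tk) = (l,sl) + (k-l, tk-sl)$ and the key bilinear estimate \eqref{gu6}, which gives
\begin{equation*}
A(t,k,tk) \le A(t,l,sl)\, A(t,k-l,tk-sl)\, e^{-(\lambda_1/4)\min(\langle l,sl\rangle,\langle k-l,tk-sl\rangle)^{1/3}}.
\end{equation*}
Using that $A$ is decreasing in $t$ (Lemma \ref{lambdaLem}(i)), I upgrade this to the same bound with $A(s,\cdot,\cdot)$ in place of $A(t,\cdot,\cdot)$. Then the $\mathcal{Z}^0$ bootstrap bound gives
$|\widehat{\rho_i}(s,l)| \le 2\overline{C}\overline\eps\, \langle s\rangle^{-6d}|l|^{1/2} A(s,l,sl)^{-1}$, while Lemma \ref{g*} together with the $\mathcal{E}^0$ bootstrap bound yields $\sup_\xi A(s,k-l,\xi)|\widehat{g_i}(s,k-l,\xi)| =: H_0(g_i)(s,k-l)$ with $\|H_0(g_i)(s,\cdot)\|_{\ell^2_m} \lesssim \overline\eps$.

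Substituting these, the problem reduces to showing that
\begin{equation*}
|k|^{-1/2}\langle t\rangle^{6d}\sum_{l\ne 0}\int_{T_1}^t \langle s\rangle^{-6d} |l|^{1/2}\,\frac{(t-s)|l\cdot k|}{|l|^2}\, e^{-(\lambda_1/4)\min(\langle l,sl\rangle,\langle k-l,tk-sl\rangle)^{1/3}}\, H_0(g_i)(s,k-l)\,ds \lesssim \overline\eps.
\end{equation*}
Applying Cauchy--Schwarz in $l$ and using the $\ell^2_l$ bound on $H_0(g_i)(s,\cdot)$, it suffices to bound the $\ell^2_l$-norm in $l$ of the kernel $\langle s\rangle^{-6d}|l|^{1/2}\frac{(t-s)|l\cdot k|}{|l|^2}e^{-(\lambda_1/4)\min(\cdots)^{1/3}}$, then integrate in $s$. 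I handle this by splitting into the non-resonant region, where $|l|$ is not aligned with the instability direction and the exponent $\min(\langle l,sl\rangle,\langle k-l,tk-sl\rangle)^{1/3}\gtrsim (\langle l\rangle\langle s\rangle)^{1/3}$, so the sum and integral converge rapidly; and the resonant region $l\approx k$, $s\approx t$, where the smallness of $(t-s)$ and of $|tk-sl|$ is compensated by the exponential gain from the second argument of the min. In both regimes the $\langle t\rangle^{6d}$ factor is absorbed by the Gevrey-$\tfrac13$ decay. This is the main technical obstacle, and it is precisely the reason for the extra term $\delta(1+t\langle r\rangle^{-2/3})^{-\delta}\langle r\rangle^{1/3}$ in the definition \eqref{gu1} of $\lambda$, which via estimates of the type \eqref{ide7} ensures that the borderline Gevrey-3 case closes.

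For the difference bound, I decompose $\widehat{\mathcal{N}_1} - \widehat{\mathcal{N}_2}$ into three pieces: the initial data difference (handled as above with $\overline\eps$ replaced by $\eps_0$ and an extra harmless factor $\langle k,tk\rangle^{-2}$), plus a term with $\widehat{\rho_1}-\widehat{\rho_2}$ and $\widehat{g_1}$, plus a term with $\widehat{\rho_2}$ and $\widehat{g_1}-\widehat{g_2}$. The new ingredient is the redistribution of the $\langle k,tk\rangle^{-2}$ weight using
\begin{equation*}
\langle k,tk\rangle^{-2} \lesssim \langle l,sl\rangle^{-2} + \langle k-l,tk-sl\rangle^{-2}.
\end{equation*}
In the first term I use the first piece of this inequality to absorb the $\langle l,sl\rangle^2$ factor arising from the $\mathcal{Z}^2$ bound on $\widehat{\rho_1}-\widehat{\rho_2}$, combined with the $H_0$ bound on $\widehat{g_1}$. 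In the second term I use the second piece to pair with the $H_2$ bound on $\widehat{g_1}-\widehat{g_2}$ coming from $\mathcal{E}^2_{g_1-g_2}$ via Lemma \ref{g*}, combined with the usual $\mathcal{Z}^0$ bound on $\widehat{\rho_2}$. In each case the resulting $s,l$ integral is bounded by the same kernel estimate as in the first part, completing the proof with constant $C_1 = C_1(d,\lambda_0)$ independent of $\overline{C}$.
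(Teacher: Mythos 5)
Your decomposition into the initial-data piece and the bilinear piece, and your treatment of the initial-data piece, match the paper exactly and are fine. The gap is in the bilinear piece, at the step where you "upgrade" \eqref{gu6} by using monotonicity of $A$ in $t$ to replace $A(t,l,sl)A(t,k-l,tk-sl)$ by $A(s,l,sl)A(s,k-l,tk-sl)$. This inequality is true but lossy in precisely the way that breaks the argument: it discards the ratio $A(t,k,tk)/A(s,k,tk)$, and the decay of this ratio in $t-s$ is the whole point of the construction. The paper instead writes $\frac{A(t,k,tk)}{A(s,l,sl)A(s,k-l,tk-sl)}=\frac{A(s,k,tk)}{A(s,l,sl)A(s,k-l,tk-sl)}\cdot\frac{A(t,k,tk)}{A(s,k,tk)}$, applies \eqref{gu6} at the fixed time $s$ to the first factor (see \eqref{pam6.2}), and carries the second factor into the kernel estimate \eqref{pam7}, where it is bounded via \eqref{gu4} by \eqref{pam8.5} away from $s=t$ and by \eqref{pam17} near $s=t$. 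Your reduced inequality, in which no weight ratio appears, is actually false: take $l=k-a$ with $0<|a|\leq|k|/8$ and $s$ near the echo time where $sa+(t-s)k\approx 0$, i.e.\ $(t-s)|k|\approx t|a|$. There $\min(\langle l,sl\rangle,\langle k-l,tk-sl\rangle)^{1/3}\approx\langle a\rangle^{1/3}$ gives no decay in $s$ or $t$, the kernel is of size $|t-s|\approx t|a|/|k|$ over a window of width $\approx 1/|k|$ in $s$, and summing over $a$ yields a contribution of order $t/|k|^2$, unbounded in $t$. This is the plasma-echo obstruction; the exponential gain "from the second argument of the min" does not compensate it, contrary to what you assert.

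Relatedly, your closing appeal to the extra term in \eqref{gu1} and to estimates of the type \eqref{ide7} is inconsistent with your own reduction: \eqref{ide7} (which is \eqref{pam16} in the proof) is precisely a bound on $\int|t-s|\,\frac{A(t,k,tk)}{A(s,k,tk)}\,e^{-\delta(\cdots)}\,ds$, so it can only be invoked if the ratio has been retained, whereas in your displayed "it suffices to show" inequality the weights have already been integrated out. A second, smaller omission: your dichotomy (min large versus $l\approx k,\ s\approx t$) misses the distant echoes $tk\approx sl$ with $l\neq k$ and $s$ well separated from $t$ (e.g.\ $l=2k$, $s\approx t/2$), where the min is $O_k(1)$; the paper handles these in Step 2 of the proof of \eqref{pam7}, again using the ratio via \eqref{pam8.5}. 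To repair your proof, keep the factor $A(t,k,tk)/A(s,k,tk)$ throughout and reproduce the three-step analysis of \eqref{pam7}; the rest of your outline (Cauchy--Schwarz in $l$, the placement of the $\mathcal{Z}$ and $H_0$ bounds, and the redistribution of $\langle k,tk\rangle^{-2}$ for the difference estimate) is sound.
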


\begin{proof} The implied constants in this proof are allowed to depend only on the parameters $d$ and $\lambda_0$. For functions $h_1\in C([T_1,T_2]:\mathcal{G}^{\alpha_0/2,1/3}_{d'}(\T^d\times\R^d))$ and $\in C([T_1,T_2]:\mathcal{G}^{\alpha_0/2,1/3}_{d'}(\T^d))$ we define the operators $\mathcal{A}_ah_1$, $|a|\leq d'$, and $\mathcal{A}h_2$ by multiplication in the Fourier space,
\begin{equation}\label{pam3}
\widehat{\mathcal{A}_ah_1}(t,k,\xi):=A(t,k,\xi)D^a_\xi\widehat{h_1}(t,k,\xi),\qquad \widehat{\mathcal{A}h_2}(t,k):=A(t,k,tk)\widehat{h_2}(t,k,tk).
\end{equation}
We define $\widehat{\rho_i}(t,k):=\widehat{g_i}(t,k,tk)$, $\delta\rho:=\rho_1-\rho_2$, $\delta g:=g_1-g_2$, and then we define $\mathcal{A}\rho_i$, $\mathcal{A}\delta\rho$, $\mathcal{A}_ag_i$, and $\mathcal{A}_a\delta g$, $i\in\{1,2\}$, by the formulas above. We also decompose $\mathcal{N}_i=\mathcal{N}_{i0}-\mathcal{N}_{i1}$,
\begin{equation}\label{pam4}
\begin{split}
&\widehat{\mathcal{N}_{i0}}(t,k):=\widehat{g_i}(T_1,k,tk),\\
&\widehat{\mathcal{N}_{i1}}(t,k):=\frac{1}{(2\pi)^d}\sum_{l\in\Z^d\setminus\{0\}}\int_{T_1}^t\widehat{\rho_i}(s,l)\widehat{g_i}(s,k-l,tk-sl)\frac{(t-s)l\cdot k}{|l|^2}\,ds.
\end{split}
\end{equation}
Using Sobolev embedding (as in Lemma \ref{g*}), for any $k\in\Z^d$, and $t\in[T_1,T_2]$ we have
\begin{equation*}
\begin{split}
e^{2(\lambda_1+4\delta)\langle k,tk\rangle^{1/3}}|\widehat{g_i}(T_1,k,tk)|^2&\lesssim \int_{\R^d}e^{2(\lambda_1+4\delta)\langle k,\xi\rangle^{1/3}}\Big\{\sum_{|a|\leq d'}\big|D^a_\xi\widehat{g_i}(T_1,k,\xi)\big|^2\Big\}\,d\xi,\\
e^{2(\lambda_1+4\delta)\langle k,tk\rangle^{1/3}}|(\widehat{g_1}-\widehat{g_2})(T_1,k,tk)|^2&\lesssim \int_{\R^d}e^{2(\lambda_1+4\delta)\langle k,\xi\rangle^{1/3}}\Big\{\sum_{|a|\leq d'}\big|D^a_\xi(\widehat{g_1}-\widehat{g_2})(T_1,k,\xi)\big|^2\Big\}\,d\xi,
\end{split}
\end{equation*}
where $i\in\{1,2\}$. Moreover, for $k\neq 0$ we have $A(t,k,tk)^2\leq e^{2(\lambda_1+4\delta)\langle k,tk\rangle^{1/3}}e^{-4\delta\langle k,tk\rangle^{1/3}}$. Since $\widehat{g_i}(t,0,0)=0$ and using also the assumption \eqref{boot1}, we have
\begin{equation*}
\sum_{k\in\Z^d\setminus\{0\}}A(t,k,tk)^2|\widehat{\mathcal{N}_{i0}}(t,k)|^2\leq\sum_{k\in\Z^d}e^{2(\lambda_1+4\delta)\langle k,tk\rangle^{1/3}}e^{-4\delta\langle t\rangle^{1/3}}|\widehat{g_i}(T_1,k,tk)|^2\lesssim \overline{\eps}^2e^{-4\delta\langle t\rangle^{1/3}}
\end{equation*}
and similarly
\begin{equation*}
\sum_{k\in\Z^d\setminus\{0\}}A(t,k,tk)^2|(\widehat{\mathcal{N}_{10}}-\widehat{\mathcal{N}_{20}})(t,k)|^2\lesssim\eps_0^2e^{-4\delta\langle t\rangle^{1/3}}.
\end{equation*}
Therefore, for any $i\in\{1,2\}$, $t\in[T_1,T_2]$, and $k\in\Z^d\setminus\{0\}$,
\begin{equation*}
\begin{split}
A(t,k,tk)|\widehat{\mathcal{N}_{i0}}(t,k)||k|^{-\beta}\langle t\rangle^{6d}&\leq C_1\overline{\eps}/2,\\
A(t,k,tk)|(\widehat{\mathcal{N}_{10}}-\widehat{\mathcal{N}_{20}})(t,k)||k|^{-\beta}\langle t\rangle^{6d}&\leq C_1\eps_0/2.
\end{split}
\end{equation*}

Since $\mathcal{N}_i=\mathcal{N}_{i0}-\mathcal{N}_{i1}$, for \eqref{pam2} it remains to prove that for $i\in\{1,2\}$
\begin{equation}\label{pam6}
\begin{split}
\sup_{t\in[T_1,T_2],\,k\in\Z^d\setminus\{0\}}\langle t\rangle^{6d}A(t,k,tk)|\widehat{\mathcal{N}_{i1}}(t,k)||k|^{-\beta}&\leq C_1\overline{\eps}/2,\\
\sup_{t\in[T_1,T_2],\,k\in\Z^d\setminus\{0\}}\langle k,tk\rangle^{-2}\langle t\rangle^{6d}A(t,k,tk)|(\widehat{\mathcal{N}_{11}}-\widehat{\mathcal{N}_{21}})(t,k)||k|^{-\beta}&\leq C_1\eps_0/2.
\end{split}
\end{equation}
Using the definitions \eqref{pam3}--\eqref{pam4} we estimate, for $i\in\{1,2\}$,
\begin{equation*}
\begin{split}
&\langle t\rangle^{6d}A(t,k,tk)|k|^{-\beta}|\widehat{\mathcal{N}_{i1}}(t,k)|\\
&\lesssim \sum_{l\in\Z^d\setminus\{0\}}\int_{T_1}^t\frac{\langle t\rangle^{6d}A(t,k,tk)}{\langle s\rangle^{6d}}\frac{\langle s\rangle^{6d}|\widehat{\mathcal{A}\rho_i}(s,l)|}{A(s,l,sl)|l|^\beta}\frac{|\widehat{\mathcal{A}_0g_i}(s,k-l,tk-sl)|}{A(s,k-l,tk-sl)}\frac{|t-s||k|^{1-\beta}}{|l|^{1-\beta}}\,ds\\
&\lesssim \sum_{l\in\Z^d\setminus\{0\}}\int_{T_1}^t\frac{\langle s\rangle^{6d}|\widehat{\mathcal{A}\rho_i}(s,l)|}{|l|^\beta}G_{\ast 0i}(s,k-l)\cdot\frac{|t-s||k|^{1-\beta}\langle t\rangle^{6d}}{|l|^{1-\beta}\langle s\rangle^{6d}}\frac{A(t,k,tk)}{A(s,l,sl)A(s,k-l,tk-sl)}\,ds,
\end{split}
\end{equation*}
where $G_{\ast 0i}(s,n):=\sup_{\xi\in\R}|\widehat{\mathcal{A}_0g_i}(s,n,\xi)|$. The assumptions \eqref{boot2} and the bounds \eqref{gu11} show that $\big\||l|^{-\beta}\langle s\rangle^{6d}\widehat{\mathcal{A}\rho_i}(s,l)\big\|_{L^\infty_{s,l}}\leq 2\overline{C}\overline{\eps}$ and $\big\|G_{\ast 0i}(s,l)\big\|_{L^\infty_sL^2_l}\leq 2C_0\overline{C}\overline{\eps}$. Moreover, in view \eqref{gu6}
\begin{equation}\label{pam6.2}
\begin{split}
&\frac{\langle k,tk\rangle^{-p}A(t,k,tk)}{\langle l,sl\rangle^{-p}A(s,l,sl)\langle k-l,tk-sl\rangle^{-p}A(s,k-l,tk-sl)}\\
&\qquad\leq \frac{A(s,k,tk)}{A(s,l,sl)A(s,k-l,tk-sl)}\frac{\langle l,sl\rangle^p\langle k-l,tk-sl\rangle^p}{\langle k,tk\rangle^p}\frac{A(t,k,tk)}{A(s,k,tk)}\\
&\qquad\lesssim e^{-8\delta\min(\langle l,sl\rangle,\langle k-l,tk-sl\rangle)^{1/3}}\frac{A(t,k,tk)}{A(s,k,tk)},
\end{split}
\end{equation}
for $p\in[0,2]$. Since $\overline{C}^8\leq\overline{\eps}^{-1}$, to prove the bounds in the first line of \eqref{pam6} it suffices to prove that for any $t\in[T_1,T_2]$ and $k\in\Z^d\setminus\{0\}$ we have
\begin{equation}\label{pam7}
\sum_{l\in\Z^d\setminus\{0\}}\int_{T_1}^t\frac{|t-s||k|^{1-\beta}\langle t\rangle^{6d}}{|l|^{1-\beta}\langle s\rangle^{6d}}\frac{A(t,k,tk)}{A(s,k,tk)}\big[e^{-8\delta\langle l,sl\rangle^{1/3}}+e^{-8\delta\langle k-l,tk-sl\rangle^{1/3}}\big]\,ds\lesssim 1.
\end{equation}
The bounds in the second line of \eqref{pam6} also follow from \eqref{pam7}, using the bounds \eqref{pam6.2} with $p=2$.

We prove the bounds \eqref{pam7} in three steps.

{\bf{Step 1.}} We show first that for any $t\in[0,\infty)$ and $k\in\Z^d\setminus\{0\}$
\begin{equation}\label{pam8}
\begin{split}
\sum_{l\in\Z^d\setminus\{0\}}\int_0^t\frac{|t-s||k|^{1-\beta}\langle t\rangle^{6d}}{|l|^{1-\beta}\langle s\rangle^{6d}}\frac{A(t,k,tk)}{A(s,k,tk)}e^{-\delta(s^{1/3}+|l|^{1/3})}\,ds\lesssim 1.
\end{split}
\end{equation}
For this we use \eqref{gu4}, so if $0\leq s\leq t$
\begin{equation}\label{pam8.5}
\frac{A(t,k,tk)}{A(s,k,tk)}=e^{\lambda(t,|k,tk|)-\lambda(s,|k,tk|)}\leq e^{-\delta^2|t-s|(1+t)^{-\delta-1}|k|^{1/3}(1+t^2)^{1/6}}\leq e^{-\delta^2|t-s||k|^{1/3}(1+t)^{-2/3-\delta}/2}.
\end{equation}
Therefore, if $\beta\in[1/2,1]$, $k,l\in\Z^d\setminus\{0\}$, and $s\in [t/2,t]$  then
\begin{equation*}
\begin{split}
\frac{|t-s||k|^{1-\beta}\langle t\rangle^{6d}}{|l|^{1-\beta}\langle s\rangle^{6d}}\frac{A(t,k,tk)}{A(s,k,tk)}e^{-\delta(s^{1/3}+|l|^{1/3})}&\lesssim e^{-\delta|l|^{1/3}}e^{-\delta t^{1/3}/2}|k|^{1-\beta}|t-s|e^{-\delta^2|t-s|(1+t)^{-1}|k|^{1/3}/2}\\
&\lesssim e^{-\delta|l|^{1/3}}e^{-\delta t^{1/3}/4}|k|^{1/2}|t-s|(1+|t-s||k|^{1/3})^{-2}\\
&\lesssim e^{-\delta|l|^{1/3}}e^{-\delta t^{1/3}/4}|t-s|^{-1/2}.
\end{split}
\end{equation*}
Moreover, if $\beta\in[1/2,1]$, $k,l\in\Z^d\setminus\{0\}$, $s\leq t/2$, and $t\leq 1$ then
\begin{equation*}
\begin{split}
\frac{|t-s||k|^{1-\beta}\langle t\rangle^{6d}}{|l|^{1-\beta}\langle s\rangle^{6d}}\frac{A(t,k,tk)}{A(s,k,tk)}e^{-\delta(s^{1/3}+|l|^{1/3})}&\lesssim |t-s||k|^{1-\beta}e^{-\delta^2|t-s||k|^{1/3}/4}e^{-\delta|l|^{1/3}}\\
&\lesssim e^{-\delta|l|^{1/3}}|t-s|^{-1/2}.
\end{split}
\end{equation*}
Finally, if $\beta\in[1/2,1]$, $k,l\in\Z^d\setminus\{0\}$, $s\leq t/2$, and $t\geq 1$ then
\begin{equation*}
\begin{split}
\frac{|t-s||k|^{1-\beta}\langle t\rangle^{6d}}{|l|^{1-\beta}\langle s\rangle^{6d}}\frac{A(t,k,tk)}{A(s,k,tk)}e^{-\delta(s^{1/3}+|l|^{1/3})}&\lesssim \langle t\rangle^{7d}|k|^{1-\beta}e^{-\delta^2t^{1/4}|k|^{1/3}/8}e^{-\delta|l|^{1/3}}\lesssim e^{-\delta|l|^{1/3}}\langle t\rangle^{-4}.
\end{split}
\end{equation*}
Therefore, in all cases, 
\begin{equation}\label{pam9}
\frac{|t-s||k|^{1-\beta}\langle t\rangle^{6d}}{|l|^{1-\beta}\langle s\rangle^{6d}}\frac{A(t,k,tk)}{A(s,k,tk)}e^{-\delta(s^{1/3}+|l|^{1/3})}\lesssim e^{-\delta|l|^{1/3}}\langle t\rangle^{-3}|t-s|^{-1/2},
\end{equation}
and the desired bounds \eqref{pam8} follow.

{\bf{Step 2.}} We show now that for any $t\in[0,\infty)$ and $k\in\Z^d\setminus\{0\}$ we have
\begin{equation}\label{pam10}
\sum_{l\in\Z^d\setminus\{0\}}\int_0^t\mathbf{1}_{\mathcal{R}}(l,s)\frac{|t-s||k|^{1-\beta}\langle t\rangle^{6d}}{|l|^{1-\beta}\langle s\rangle^{6d}}\frac{A(t,k,tk)}{A(s,k,tk)}e^{-\delta(|k-l|^{1/3}+|tk-sl|^{1/3})}\,ds\lesssim 1,
\end{equation}
where
\begin{equation}\label{pam11}
\mathcal{R}:=\big\{(l,s)\in(\Z^d\setminus\{0\})\times[0,t]:\,s\leq 0.99(t+1)\,\,\text{ or }\,\, |k-l|\geq |l|/10\big\}.
\end{equation}

For this we notice that if $k,l\in\Z^d\setminus\{0\}$ then
\begin{equation}\label{pam12}
\frac{|k|^{1-\beta}}{|l|^{1-\beta}}e^{-\delta|k-l|^{1/3}}\lesssim e^{-\delta|k-l|^{1/3}/2}.
\end{equation}
Recall also the bounds \eqref{pam8.5}. Therefore, if $k,l\in\Z^d\setminus\{0\}$ and $s\leq \min\{t,0.99 (t+1)\}$ then
\begin{equation}\label{pam13}
\frac{|t-s|\langle t\rangle^{6d}}{\langle s\rangle^{6d}}\frac{A(t,k,tk)}{A(s,k,tk)}e^{-\delta |tk-sl|^{1/3}}\lesssim e^{-\delta^4(1+t)^{1/4}}.
\end{equation}
Moreover, if $k,l\in\Z^d\setminus\{0\}$, $s\in[0.99 (t+1),t]$, and $|k-l|\geq |l|/10$ then $|tk-sl|=|t(k-l)+(t-s)l|\geq t|k-l|/2$, and the bounds \eqref{pam13} follow in this case as well. The desired bounds \eqref{pam10} follow from \eqref{pam12}--\eqref{pam13}.

{\bf{Step 3.}} Finally, we show that
\begin{equation}\label{pam15}
\sum_{l\in\Z^d\setminus\{0\}}\int_0^t(1-\mathbf{1}_{\mathcal{R}}(l,s))\frac{|t-s||k|^{1-\beta}\langle t\rangle^{6d}}{|l|^{1-\beta}\langle s\rangle^{6d}}\frac{A(t,k,tk)}{A(s,k,tk)}e^{-\delta(|k-l|^{1/3}+|tk-sl|^{1/3})}\,ds\lesssim 1,
\end{equation}
where $\mathcal{R}$ is as in \eqref{pam11}. We make the change of variables $l=k-a$, so it suffices to prove that if $k\in\Z^d\setminus\{0\}$ and $t\in[10,\infty)$ then
\begin{equation}\label{pam16}
\begin{split}
\sum_{a\in\Z^d,\,|a|\leq |k|/8}\int_{0.99t}^{t}\frac{|t-s|A(t,k,tk)}{A(s,k,tk)}e^{-\delta(|a|^{1/3}+|sa+(t-s)k|^{1/3})}\,ds\lesssim 1.
\end{split}
\end{equation}

The bounds \eqref{pam8.5} are not sufficient in this case. We examine the identity \eqref{gu4} and use the second term in the right-hand side to see that
\begin{equation*}
\begin{split}
\lambda(s,|k,tk|)-\lambda(t,|k,tk|)&\geq \delta^2|t-s|(1+t\langle k,tk\rangle^{-2/3})^{-1-\delta}\langle k,tk\rangle^{-1/3}\\
&\geq\delta^3|t-s| t^{-1/3}|k|^{-1/3}(1+t^{1/3}|k|^{-2/3})^{-1-\delta},
\end{split}
\end{equation*}
if $t\geq 10$, $s\in[0.99t,t]$, and $k\in\Z^d\setminus\{0\}$. Therefore
\begin{equation}\label{pam17}
\frac{A(t,k,tk)}{A(s,k,tk)}\leq e^{-\delta^4|t-s|t^{-1/3}|k|^{-1/3}(1+t|k|^{-2})^{-1/3-\delta}}.
\end{equation}
Since $|sa+(t-s)k|\geq \big|s|a|-(t-s)|k|\big|$, for \eqref{pam16} it suffices to show that if $a\in\Z^d$, $k\in\Z^d\setminus\{0\}$, $t\in[10,T]$, and $|a|\leq|k|/8$ then
\begin{equation}\label{pam18}
\begin{split}
&\int_{0.99t}^{t}K_a(t,k,s)\,ds\lesssim 1,\\
&K_a(t,k,s):=|t-s|e^{-\delta^4|t-s|t^{-1/3}|k|^{-1/3}(1+t|k|^{-2})^{-1/3-\delta}}e^{-\delta|s|a|-(t-s)|k||^{1/3}}e^{-\delta |a|^{1/3}/2}.
\end{split}
\end{equation}

The bounds \eqref{pam18} are easy in the case $a=0$, since $|K_0(t,k,s)|\lesssim |t-s|e^{-\delta|t-s|^{1/3}}\lesssim \langle t-s\rangle^{-4}$. On the other hand, if $a\in\Z^d\setminus\{0\}$ then
\begin{equation*}
|K_a(t,k,s)|\lesssim |t-s|e^{-\delta|t-s|^{1/3}|k|^{1/3}/2}\lesssim \langle t-s\rangle^{-4}
\end{equation*}
if $(t-s)|k|\notin[t|a|/2,2t|a|]$, and
\begin{equation*}
\begin{split}
|K_a(t,k,s)|&\lesssim t|k|^{-1}e^{-\delta^4(t|k|^{-2})^{2/3}(1+t|k|^{-2})^{-1/3-\delta}/4}e^{-\delta|s|a|-(t-s)|k||^{1/3}}e^{-\delta |a|^{1/3}/4}\\
&\lesssim |k|e^{-\delta|s|a|-(t-s)|k||^{1/3}}e^{-\delta |a|^{1/3}/4}
\end{split}
\end{equation*}
if $(t-s)|k|\in[t|a|/2,2t|a|]$. The desired bounds \eqref{pam18} follow in this case as well.
\end{proof}

We can now prove our main estimates on the function $\rho_1$, $\rho_2$, and $\rho_1-\rho_2$.

\begin{proposition}\label{MainBootstrapIm1} With the assumptions and notations of Proposition \ref{MainBootstrap}, there is a constant $C_2=C_2(d,\lambda_0,\vartheta)\geq 1$ such that for any $t\in[T_1,T_2]$ and $i\in\{1,2\}$
\begin{equation}\label{pam20}
\mathcal{Z}_{g_i}^{0}(t)\langle t\rangle^{6d}\leq C_2\overline{\eps}\qquad\text{ and }\qquad \mathcal{Z}_{g_1-g_2}^{2}(t)\langle t\rangle^{6d}\leq C_2\eps_0.
\end{equation}
As a consequence, if the constant $\overline{C}$ is chosen sufficiently large, $\overline{C}\geq C_2$, then there is a constant $C'_0=C'_0(d)$ such that
\begin{equation}\label{pam21}
\begin{split}
\big\|A(t,k,tk)\widehat{\rho_i}(t,k)|k|^{-\beta}\|_{L^2_k}&\leq C'_0(\overline{C}^dC_2)^{1/(d+1)}\overline{\epsilon}\langle t\rangle^{-3},\\
\big\|\langle k,tk\rangle^{-2} A(t,k,tk)(\widehat{\rho_1-\rho_2})(t,k)|k|^{-\beta}\|_{L^2_k}&\leq C'_0(\overline{C}^dC_2)^{1/(d+1)}\epsilon_0\langle t\rangle^{-3}
\end{split}
\end{equation}
for any $t\in[T_1,T_2]$ and $i\in\{1,2\}$.
\end{proposition}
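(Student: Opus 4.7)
The plan is to first establish the pointwise $Z$-norm bounds \eqref{pam20} using the representation \eqref{pam27.1}, and then deduce the $L^2$ bounds \eqref{pam21} by interpolating with the energy bootstrap via Lemma \ref{g*}. Starting from \eqref{pam27.1}, the contribution of $\widehat{\mathcal{N}_i}(t,k)$ to the quantity $\langle t\rangle^{6d}A(t,k,tk)|\widehat{\rho_i}(t,k)||k|^{-\beta}$ is controlled directly by $C_1\overline{\eps}$ thanks to \eqref{pam2}. For the convolution term we insert matching weights in order to apply Lemma \ref{LemN} at time $s$: writing
\begin{equation*}
\langle t\rangle^{6d}A(t,k,tk)|k|^{-\beta}|\widehat{\mathcal{N}_i}(s,k)|\cdot|\widehat{G}(t-s,k)| = \big[A(s,k,sk)|k|^{-\beta}\langle s\rangle^{6d}|\widehat{\mathcal{N}_i}(s,k)|\big]\cdot\frac{A(t,k,tk)}{A(s,k,sk)}\cdot\frac{\langle t\rangle^{6d}}{\langle s\rangle^{6d}}\,|\widehat{G}(t-s,k)|
\end{equation*}
and applying \eqref{pam2} to the bracket reduces matters to proving
\begin{equation*}
I(t,k):=\int_{T_1}^t \frac{A(t,k,tk)}{A(s,k,sk)}\cdot\frac{\langle t\rangle^{6d}}{\langle s\rangle^{6d}}\cdot|\widehat{G}(t-s,k)|\,ds\lesssim 1
\end{equation*}
uniformly in $t\in[T_1,T_2]$ and $k\in\Z^d\setminus\{0\}$. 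The bound for $\rho_1-\rho_2$ is identical once one observes $\langle k,tk\rangle^{-2}\leq\langle k,sk\rangle^{-2}$ for $s\leq t$, which lets the extra weight be absorbed into the difference bound in the second line of \eqref{pam2}.

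The core step is the weight-ratio estimate controlling $A(t,k,tk)/A(s,k,sk)$. I would split
\begin{equation*}
\lambda(t,|k,tk|)-\lambda(s,|k,sk|)=\big[\lambda(t,|k,tk|)-\lambda(t,|k,sk|)\big]+\big[\lambda(t,|k,sk|)-\lambda(s,|k,sk|)\big],
\end{equation*}
observe that the second bracket is nonpositive by \eqref{gu3.3}, and integrate \eqref{gu2} (using $\int r\langle r\rangle^{-5/3}\,dr=3\langle r\rangle^{1/3}$) to obtain
\begin{equation*}
\lambda(t,|k,tk|)-\lambda(t,|k,sk|)\leq(\lambda_1+3\delta)\big(\langle k,tk\rangle^{1/3}-\langle k,sk\rangle^{1/3}\big).
\end{equation*}
Since $\partial_u\langle k,uk\rangle=|k|^2 u/\langle k,uk\rangle\leq|k|$, one gets $\langle k,tk\rangle-\langle k,sk\rangle\leq|k|(t-s)$, and the elementary inequality $a^{1/3}-b^{1/3}\leq(a-b)^{1/3}$ yields the sharp bound $\langle k,tk\rangle^{1/3}-\langle k,sk\rangle^{1/3}\leq|(t-s)k|^{1/3}$. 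Combining this with \eqref{pam27.2} and using $\lambda_1\leq 0.9\lambda_0$, $\delta=\lambda_0/200$, the net exponent is strictly negative: there exists $\mu_0>0$ (depending only on $\lambda_0$) with
\begin{equation*}
\frac{A(t,k,tk)}{A(s,k,sk)}\,|\widehat{G}(t-s,k)|\lesssim e^{-\mu_0|(t-s)k|^{1/3}}.
\end{equation*}
The integral $I(t,k)\lesssim 1$ then follows by splitting at $s=t/2$: on $[t/2,t]$ the factor $\langle t\rangle^{6d}/\langle s\rangle^{6d}$ is bounded by $2^{6d}$ and $\int_{t/2}^t e^{-\mu_0|(t-s)k|^{1/3}}\,ds\lesssim|k|^{-1}$, while on $[T_1,t/2]$ the quantity $\langle t\rangle^{6d+1}e^{-\mu_0(t/2)^{1/3}|k|^{1/3}}$ is uniformly bounded since the stretched exponential beats every polynomial.

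For \eqref{pam21} I would interpolate the pointwise bound \eqref{pam20}, namely $\big||k|^{-\beta}A(t,k,tk)\widehat{\rho_i}(t,k)\big|\leq C_2\overline{\eps}\langle t\rangle^{-6d}$, with the $L^2$ estimate $\|A(t,k,tk)\widehat{\rho_i}(t,k)\|_{L^2_k}\leq 2C_0\overline{C}\overline{\eps}$ that follows from the energy bootstrap \eqref{boot2} applied pointwise in $k$ via Lemma \ref{g*}. Splitting $\Z^d\setminus\{0\}=\{0<|k|\leq N\}\cup\{|k|>N\}$, the low-frequency part contributes $\lesssim C_2^2\overline{\eps}^2\langle t\rangle^{-12d}N^d$ (counting lattice points), and the high-frequency part contributes $\lesssim\overline{C}^2\overline{\eps}^2 N^{-2\beta}$. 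With $\beta=1/2$ so $d+2\beta=d+1$, optimizing gives $N\sim(\overline{C}^2C_2^{-2}\langle t\rangle^{12d})^{1/(d+1)}$ and
\begin{equation*}
\big\||k|^{-\beta}A(t,k,tk)\widehat{\rho_i}(t,k)\big\|_{L^2_k}\lesssim(\overline{C}^dC_2)^{1/(d+1)}\overline{\eps}\langle t\rangle^{-6d/(d+1)}\leq C_0'(\overline{C}^dC_2)^{1/(d+1)}\overline{\eps}\langle t\rangle^{-3},
\end{equation*}
using $6d/(d+1)\geq 3$ for $d\geq 1$. The corresponding bound on $\rho_1-\rho_2$ follows from the same interpolation.

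The main obstacle is the weight-ratio estimate: preserving a strictly positive net exponent in $|(t-s)k|^{1/3}$ after combining the Gevrey weight with the Green's function decay requires both the sharp inequality $\langle k,tk\rangle^{1/3}-\langle k,sk\rangle^{1/3}\leq|(t-s)k|^{1/3}$ (without any multiplicative constant larger than $1$) and the structural gap between $\lambda_1+3\delta\leq 0.915\lambda_0$ and the $0.95\lambda_0$ rate in \eqref{pam27.2}. This is precisely the gap that the parameter choices $\lambda_1\leq 0.9\lambda_0$ and $\delta=\lambda_0/200$ are designed to guarantee, and it is what makes the sharp Gevrey-$1/3$ threshold accessible in this scheme.
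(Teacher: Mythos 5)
Your proposal is correct and follows essentially the same route as the paper: the representation formula \eqref{pam27.1} combined with Lemma \ref{LemN} and the Green's function decay \eqref{pam27.2} for the pointwise bounds \eqref{pam20}, then an $L^\infty_k$--$L^2_k$ frequency-splitting interpolation against the energy bootstrap for \eqref{pam21}. The only (immaterial) difference is that you bound the weight ratio $A(t,k,tk)/A(s,k,sk)$ by integrating $\partial_r\lambda$ directly and using the subadditivity of $x\mapsto x^{1/3}$, whereas the paper first uses monotonicity in $t$ and then the submultiplicativity \eqref{gu6}; both exploit exactly the same gap between $\lambda_1+O(\delta)$ and the $0.95\lambda_0$ rate in \eqref{pam27.2}.
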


\begin{proof} We remark that the $L^2$ bounds \eqref{pam21} are needed later, in the proof of Lemma \ref{Lem1ner} below. 

To prove the bounds \eqref{pam20} we use the representation formula \eqref{pam27.1}. For any $t\in [T_1,T_2]$, $k\in\Z^d\setminus\{0\}$, and $i\in\{1,2\}$ we estimate
\begin{equation*}
\begin{split}
\langle t\rangle^{6d}A(t,k,tk)&|k|^{-\beta}\big|\widehat{\rho_i}(t,k)\big|\leq \langle t\rangle^{6d}A(t,k,tk)|k|^{-\beta}\big|\widehat{\mathcal{N}_i}(t,k)\big|\\
&+\int_0^t\big[\langle s\rangle^{6d}A(s,k,sk)|k|^{-\beta}\big|\widehat{\mathcal{N}_i}(s,k)\big|\big]\cdot \big|\widehat{G}(t-s,k)\big|\frac{\langle t\rangle^{6d}A(t,k,tk)}{\langle s\rangle^{6d}A(s,k,sk)}\,ds.
\end{split}
\end{equation*}
Using Lemma \ref{LemN}, the bounds \eqref{gu6}, and the bounds on $\widehat{G}$ in \eqref{pam27.2} we further estimate
\begin{equation*}
\begin{split}
\langle t\rangle^{6d}A(t,k,tk)|k|^{-\beta}&\big|\widehat{\rho_i}(t,k)\big|\leq C_1\overline{\epsilon}+C_1\overline{\epsilon}\int_0^t\big|\widehat{G}(t-s,k)\big|\frac{\langle t\rangle^{6d}A(s,k,tk)}{\langle s\rangle^{6d}A(s,k,sk)}\,ds\\
&\lesssim_{d,\lambda_0,\vartheta}\overline{\epsilon}\Big[1+\int_0^te^{-0.95\lambda_0\langle (t-s)k\rangle^{1/3}}\langle t-s\rangle^{6d}A(s,0,(t-s)k)\,ds\Big].
\end{split}
\end{equation*}
The desired bounds on $\mathcal{Z}_{g_i}^{0}(t)\langle t\rangle^{6d}$ in \eqref{pam20} follow since $A(s,0,(t-s)k)\leq e^{(\lambda_1+2\delta)\langle (t-s)k\rangle^{1/3}}$. The bounds on $\mathcal{Z}_{g_1-g_2}^{2}(t)\langle t\rangle^{6d}$ follow similarly.

To prove the bounds \eqref{pam21} we recall that $\beta=1/2$. Using Lemma \ref{g*} and the bootstrap assumption \eqref{boot2}, for any $t\in[T_1,T_2]$ and $i\in\{1,2\}$ we have
\begin{equation*}
\sum_{k\in\Z^d,\,|k|\geq D}|A(t,k,tk)\widehat{\rho_i}(t,k)|^2|k|^{-1}\lesssim_d \overline{C}^2\overline{\epsilon}^2D^{-1},
\end{equation*}
where $D:=\langle t\rangle^{6}(\overline{C}/C_2)^{2/(d+1)}$ is a suitable constant. On the other hand, using \eqref{pam20}, 
\begin{equation*}
\sum_{k\in\Z^d\setminus\{0\},\,|k|\leq D}|A(t,k,tk)\widehat{\rho_i}(t,k)|^2|k|^{-1}\leq \sum_{k\in\Z^d,\,|k|\leq D}C_2^2\overline{\epsilon}^2\langle t\rangle^{-12d}\lesssim_dC_2^2\overline{\epsilon}^2\langle t\rangle^{-12d}D^d.
\end{equation*}
The desired bounds in the first line of \eqref{pam21} follow from these two inequalities. The bounds in the second line follow in a similar way.
\end{proof}

\subsection{Improved control on energy functionals}\label{ham1}

In this subsection we prove the following:

\begin{proposition}\label{MainBootstrapIm2}
With the assumptions and notations of Proposition \ref{MainBootstrap}, for any $t\in[T_1,T_2]$ and $i\in\{1,2\}$ we have
\begin{equation}\label{ham2}
\mathcal{E}^0_{g_i}(t)\leq\overline{C}^2\overline{\eps}^2/4\qquad\text{ and }\qquad\mathcal{E}^2_{g_1-g_2}(t)\leq\overline{C}^2\eps_0^2/4.
\end{equation}
\end{proposition}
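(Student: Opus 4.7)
The plan is to differentiate the energy functionals in time, substitute $\partial_t \widehat{g_i}$ using \eqref{NVP4}, and exploit the dissipation provided by the decreasing weight $A$ to absorb the nonlinear errors. Using $\partial_t\lambda \leq 0$ from \eqref{gu3.3}, we obtain
\begin{equation*}
\partial_t \mathcal{E}^0_{g_i}(t) = -2\mathcal{D}^0_{g_i}(t) + 2\mathrm{Re} \sum_k\int A(t,k,\xi)^2 \sum_{|a|\leq d'} D^a_\xi\widehat{g_i}\, \overline{D^a_\xi \partial_t \widehat{g_i}}\, d\xi,
\end{equation*}
where $\mathcal{D}^0_{g_i}(t) := \sum_k\int A^2\, |\partial_t\lambda(t,|k,\xi|)|\sum_{|a|\leq d'}|D^a_\xi\widehat{g_i}|^2\, d\xi \geq 0$ is the good dissipative term. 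Substituting \eqref{NVP4} splits the remaining piece into a \emph{reaction} contribution linear in $g_i$ with coefficient $\widehat{\rho_i}(t,k) \widehat{M_0}(\xi-tk)$ and a \emph{transport} contribution bilinear in $(\widehat{\rho_i}, \widehat{g_i})$.

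The reaction term is handled by combining the Gevrey decay of $\widehat{M_0}$ from \eqref{M01} with the weight bound \eqref{gu6}, which produces an exponential gain in $\langle \xi-tk\rangle^{1/3}$ that dominates any symbol growth. Pairing with the $L^2$ bound \eqref{pam21} on $A(t,k,tk)\widehat{\rho_i}(t,k)|k|^{-1/2}$, which decays like $\overline{\epsilon}\langle t\rangle^{-3}$, yields a contribution of order $\overline{\epsilon}\langle t\rangle^{-3}\mathcal{E}^0_{g_i}(t)$, trivially integrable in time.

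The transport term requires a paraproduct decomposition in $l$ based on the relative size of $\langle l, tl\rangle$ and $\langle k-l, \xi-tl\rangle$. After applying $D^a_\xi$ via Leibniz (the symbol $l\cdot(\xi-tk)/|l|^2$ contributes only at orders $0$ and $1$), the three regions are handled as follows. In the low-high region $\langle l,tl\rangle \lesssim \langle k-l, \xi-tl\rangle$, the key tool is the commutator bound \eqref{gu6.5}: the gain $\langle k,\xi\rangle^{-2/3}$ exactly offsets the linear growth of the symbol in $\xi-tk$, and $\widehat{\rho_i}$ is placed in $L^2_l$ via \eqref{pam21}. In the high-low region \eqref{gu6} is applied directly, placing $A(t,k-l,\xi-tl) D^a_\xi\widehat{g_i}$ in $L^\infty_\xi$ via Lemma \ref{g*}. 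In the balanced region both factors receive the full Gevrey gain from \eqref{gu6}. In each subcase, the net error is dominated by $\kappa\,\mathcal{D}^0_{g_i}(t) + C\overline\epsilon\, \langle t\rangle^{-2}\mathcal{E}^0_{g_i}(t)$ with $\kappa \ll 1$: the first piece is absorbed by the good dissipation, and Gronwall then closes the bootstrap thanks to the initial bound \eqref{boot1}.

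The argument for $\mathcal{E}^2_{\delta g}$ with $\delta g := g_1 - g_2$ is parallel, based on the schematic equation $\partial_t\delta g = \mathcal{L}(\delta\rho, M_0) + \mathcal{L}(\delta\rho, g_1) + \mathcal{L}(\rho_2, \delta g)$: the first two terms use the $Z$- and $L^2$-bounds on $\delta\rho$ from \eqref{pam20}--\eqref{pam21} together with the energy control of $g_1$, while the third uses the smallness of $\rho_2$ and the energy bound on $\delta g$. The extra weight $\langle k,\xi\rangle^{-2}$ only introduces the mild distortion $\langle l,tl\rangle^2\langle k-l,\xi-tl\rangle^2 / \langle k,tk\rangle^2$, which is dominated by the exponential Gevrey gain exactly as in the estimate \eqref{pam6.2}. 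The main obstacle throughout is the balanced transport region where $|l|\sim |k|$ and $|\xi-tl|\sim|\xi-tk|$: this is precisely the critical regime where the product weight \eqref{ide4} of previous works fails at the endpoint Gevrey-3 exponent, and where the second term $\delta(1+t\langle r\rangle^{-2/3})^{-\delta}\langle r\rangle^{1/3}$ in the weight $\lambda$ of \eqref{gu1} supplies the missing decay, analogously to the role it plays in the kernel estimate \eqref{pam16} of Lemma \ref{LemN}.
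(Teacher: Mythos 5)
Your overall architecture matches the paper's proof (Lemmas \ref{Lem1ner}--\ref{Lem3ner}): differentiate the energy, keep the sign-definite dissipation coming from $\partial_t A\leq 0$, estimate the reaction term via \eqref{ner9.5} and the $L^2$ density bound \eqref{pam21}, split the transport term by relative frequency size, and absorb the residual loss into the dissipation. However, there is one genuine gap in the treatment of the low--high transport region, which is the heart of the matter. The commutator bound \eqref{gu6.5} estimates a \emph{difference} of weights, $|A(t,k,\xi)-A(t,k-l,\xi-tl)|$, but the transport term as written carries the product $A(t,k,\xi)^2$, not a difference. The difference only appears after the symmetrization step the paper performs at the start of Lemma \ref{Lem3ner}: the trilinear form with the symmetric weight $A(t,k,\xi)A(t,k-l,\xi-tl)$ vanishes identically, because the symbol $l\cdot(\xi-tk)/|l|^2$ is antisymmetric under $(k,\xi,l)\mapsto(k-l,\xi-tl,-l)$ and $\overline{\widehat{\rho}(t,l)}=\widehat{\rho}(t,-l)$ since $\rho$ is real. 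Without this cancellation, the low--high piece retains the full factor $|\xi-tk|\sim\langle t\rangle\langle k,\xi\rangle$, against which the exponential gain $e^{-c\langle l,tl\rangle^{1/3}}$ from \eqref{gu6} is useless and the dissipation (which only gains $\langle k,\xi\rangle^{1/3}(1+t)^{-1-\delta}$) is insufficient. You name the right tool but not the mechanism that makes it applicable; as written, the step would fail. Relatedly, the gain $\langle k,\xi\rangle^{-2/3}$ does \emph{not} ``exactly offset'' the symbol: a factor $\langle k,\xi\rangle^{1/3}$ survives (this is the term $J_{12}$ in the paper), and it is precisely this residue that must be absorbed into the dissipation with the smallness factor $\overline{C}\overline{\eps}$ --- your final error bound $\kappa\,\mathcal{D}^0+C\overline{\eps}\langle t\rangle^{-2}\mathcal{E}^0$ is consistent with this, but the intermediate claim is off.

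A second, smaller imprecision concerns the difference estimate. The decomposition $\rho_1 D^a_\xi\widehat{g_1}-\rho_2 D^a_\xi\widehat{g_2}=\rho_1 D^a_\xi\widehat{\delta g}+\delta\rho\, D^a_\xi\widehat{g_2}$ leaves a cross term that cannot be symmetrized (the two $g$-factors differ), so the full derivative loss $|\xi-tk|$ reappears there. In the paper this is rescued by the mismatch $\langle l,tl\rangle^2\langle k-l,\xi-tl\rangle^2/\langle k,\xi\rangle^2$ created by the $p=2$ weights, combined with \eqref{gu6} --- i.e.\ the weaker norm for differences is not a ``mild distortion'' but the essential device that compensates the loss of symmetry. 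You should make explicit that this is why $\mathcal{E}^2$, rather than $\mathcal{E}^0$, is used for $g_1-g_2$.
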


To justify formally the calculations, for any $h\in X_{[T_1,T_2]}$ we define the mollified energies
\begin{equation}\label{ham2.1}
\begin{split}
&\mathcal{E}^{p,L}_h(t):=\sum_{k\in \mathbb{Z}^d}\int_{\R^d}\langle k,\xi\rangle^{-2p}A_L(t,k,\xi)^2\Big\{\sum_{|a|\leq d'}\big|D^a_\xi\widehat{h}(t,k,\xi)\big|^2\Big\}\,d\xi,\\
&A_L(t,k,\xi):=(1+2^{-L}\langle k,\xi\rangle)^{-4}A(t,k,\xi),
\end{split}
\end{equation}
where $p\in[0,2]$ and $L\geq 4$. We calculate
\begin{equation*}
\begin{split}
\frac{d}{dt}\mathcal{E}^{0,L}_{g_i}(t)&=2\sum_{k\in \mathbb{Z}^d}\int_{\R^d}A_L(t,k,\xi)\dot{A_L}(t,k,\xi)\Big\{\sum_{|a|\leq d'}\big|D^a_\xi\widehat{g_i}(t,k,\xi)\big|^2\Big\}\,d\xi\\
&+2\Re\Big\{\sum_{k\in \mathbb{Z}^d}\int_{\R^d}A_L(t,k,\xi)^2\Big\{\sum_{|a|\leq d'}D^a_\xi(\partial_t\widehat{g_i})(t,k,\xi)\overline{D^a_\xi\widehat{g_i}(t,k,\xi)}\Big\}\,d\xi\Big\},
\end{split}
\end{equation*}
where $\dot{A_L}=\partial_tA_L$. Using the formula \eqref{NVP4} for $\partial_t\widehat{g_i}$ we obtain
\begin{equation}\label{ner1}
\frac{d}{dt}\mathcal{E}^{0,L}_{g_i}(t)=I_i(t)+II_i(t)+III_i(t)+IV_i(t),
\end{equation}
where
\begin{equation}\label{ner2}
I_i(t):=2\sum_{|a|\leq d'}\sum_{k\in \mathbb{Z}^d}\int_{\R^d}A_L(t,k,\xi)\dot{A_L}(t,k,\xi)\big|D^a_\xi\widehat{g_i}(t,k,\xi)\big|^2\,d\xi,
\end{equation}
\begin{equation}\label{ner3}
\begin{split}
II_i(t)&:=-2\Re\Big\{\sum_{|a|\leq d'}\sum_{k\in \mathbb{Z}^d\setminus\{0\}}\int_{\R^d}A_L(t,k,\xi)^2\overline{D^a_\xi\widehat{g_i}(t,k,\xi)}\\
&\qquad\qquad\times D^a_\xi\Big[\widehat{\rho_i}(t,k)\widehat{M_0}(\xi-tk)\frac{k\cdot (\xi-tk)}{|k|^2}\Big]\,d\xi\Big\},
\end{split}
\end{equation}
\begin{equation}\label{ner4}
\begin{split}
III_i(t)&:=-\frac{2}{(2\pi)^d}\Re\Big\{\sum_{|a|\leq d'}\sum_{k\in \mathbb{Z}^d}\int_{\R^d}A_L(t,k,\xi)^2\overline{D^a_\xi\widehat{g_i}(t,k,\xi)}\\
&\qquad\qquad\qquad\times \Big[\sum_{l\in\Z^d\setminus\{0\}}\widehat{\rho_i}(t,l)D^a_\xi\widehat{g_i}(t,k-l,\xi-tl)\frac{l\cdot(\xi-tk)}{|l|^2}\Big]\,d\xi\Big\},
\end{split}
\end{equation}
\begin{equation}\label{ner5}
\begin{split}
IV_i(t)&:=-\frac{2}{(2\pi)^d}\Re\Big\{\sum_{|a|\leq d'}\sum_{|a'|=1,\,a'\leq a}\sum_{k\in \mathbb{Z}^d}\int_{\R^d}A_L(t,k,\xi)^2\overline{D^a_\xi\widehat{g_i}(t,k,\xi)}\\
&\qquad\qquad\qquad\times \Big[\sum_{l\in\Z^d\setminus\{0\}}\widehat{\rho_i}(t,l)(D^{a-a'}_\xi\widehat{g_i})(t,k-l,\xi-tl)\frac{D^{a'}_\xi[l\cdot(\xi-tk)]}{|l|^2}\Big]\,d\xi\Big\}.
\end{split}
\end{equation}

Notice that $I_i(t)$ are good terms, $I_i(t)\leq 0$, since $\dot{A_L}(t,k,\xi)\leq 0$. Using \eqref{ner1} we write
\begin{equation}\label{ner6}
\begin{split}
\mathcal{E}^{0,L}_{g_i}(t)&=\mathcal{E}^{0,L}_{g_i}(T_1)+\int_{T_1}^t\big[I_i(s)+II_i(s)+III_i(s)+IV_i(s)\big]\,ds\\
&\leq\mathcal{E}^0_{g_i}(T_1)+\int_{T_1}^t|II_i(s)|\,ds+\int_{T_1}^t|IV_i(s)|\,ds+\int_{T_1}^t\big[|III_i(s)|-|I_i(s)|\big]\,ds.
\end{split}
\end{equation}
The bounds on $\mathcal{E}^0_{g_i}(t)$ in \eqref{ham2} follow from Lemmas \ref{Lem1ner}, \ref{Lem2ner}, \ref{Lem3ner} below, by letting $L\to\infty$.

Similarly, we calculate
\begin{equation}\label{ner1.3}
\frac{d}{dt}\mathcal{E}^{2,L}_{g_1-g_2}(t)=\delta I(t)+\delta II(t)+\delta III(t)+\delta IV(t),
\end{equation}
where
\begin{equation}\label{ner2.3}
\delta I(t):=2\sum_{|a|\leq d'}\sum_{k\in \mathbb{Z}^d}\int_{\R^d}\langle k,\xi\rangle^{-4}A_L(t,k,\xi)\dot{A_L}(t,k,\xi)\big|D^a_\xi(\widehat{g_1}-\widehat{g_2})(t,k,\xi)\big|^2\,d\xi,
\end{equation}
\begin{equation}\label{ner3.3}
\begin{split}
\delta II(t)&:=-2\Re\Big\{\sum_{|a|\leq d'}\sum_{k\in \mathbb{Z}^d\setminus\{0\}}\int_{\R^d}\langle k,\xi\rangle^{-4}A_L(t,k,\xi)^2\overline{D^a_\xi(\widehat{g_1}-\widehat{g_2})(t,k,\xi)}\\
&\qquad\qquad\times D^a_\xi\Big[(\widehat{\rho_1}-\widehat{\rho_2})(t,k)\widehat{M_0}(\xi-tk)\frac{k\cdot (\xi-tk)}{|k|^2}\Big]\,d\xi\Big\},
\end{split}
\end{equation}
\begin{equation}\label{ner4.3}
\begin{split}
&\delta III(t):=-\frac{2}{(2\pi)^d}\Re\Big\{\sum_{|a|\leq d'}\sum_{k\in \mathbb{Z}^d}\int_{\R^d}\langle k,\xi\rangle^{-4}A_L(t,k,\xi)^2\overline{D^a_\xi(\widehat{g_1}-\widehat{g_2})(t,k,\xi)}\\
&\quad\times \Big[\sum_{l\in\Z^d\setminus\{0\}}[\widehat{\rho_1}(t,l)D^a_\xi\widehat{g_1}(t,k-l,\xi-tl)-\widehat{\rho_2}(t,l)D^a_\xi\widehat{g_2}(t,k-l,\xi-tl)]\frac{l\cdot(\xi-tk)}{|l|^2}\Big]\,d\xi\Big\},
\end{split}
\end{equation}
\begin{equation}\label{ner5.3}
\begin{split}
&\delta IV(t):=-\frac{2}{(2\pi)^d}\Re\Big\{\sum_{|a|\leq d'}\sum_{|a'|=1,\,a'\leq a}\sum_{k\in \mathbb{Z}^d}\int_{\R^d}\langle k,\xi\rangle^{-4}A_L(t,k,\xi)^2\overline{D^a_\xi(\widehat{g_1}-\widehat{g_2})(t,k,\xi)}\\
&\quad\times \Big[\sum_{l\in\Z^d\setminus\{0\}}[\widehat{\rho_1}(t,l)(D^{a-a'}_\xi\widehat{g_1})(t,k-l,\xi-tl)-\widehat{\rho_2}(t,l)(D^{a-a'}_\xi\widehat{g_2})(t,k-l,\xi-tl)]\\
&\qquad\qquad\qquad\times\frac{D^{a'}_\xi[l\cdot(\xi-tk)]}{|l|^2}\Big]\,d\xi\Big\}.
\end{split}
\end{equation}
As before, we notice that $\delta I(t)\leq 0$, so
\begin{equation}\label{ner6.3}
\mathcal{E}^{2,L}_{g_1-g_2}(t)\leq\mathcal{E}^2_{g_1-g_2}(T_1)+\int_{T_1}^t|\delta II(s)|\,ds+\int_{T_1}^t|\delta IV(s)|\,ds+\int_{T_1}^t\big[|\delta III(s)|-|\delta I(s)|\big]\,ds.
\end{equation}
The bounds on $\mathcal{E}^2_{g_1-g_2}(t)$ in \eqref{ham2} follow from Lemmas \ref{Lem1ner}, \ref{Lem2ner}, \ref{Lem3ner} below by letting $L\to\infty$.

\begin{lemma}\label{Lem1ner}
For any $t\in[T_1,T_2]$, $L\geq 4$, and $i\in\{1,2\}$ we have
\begin{equation}\label{ner7}
\int_{T_1}^t\big|II_i(s)\big|\,ds\leq\overline{C}^2\overline{\eps}^2/8\qquad\text{ and }\qquad \int_{T_1}^t\big|\delta II(s)\big|\,ds\leq\overline{C}^2\eps_0^2/8.
\end{equation}
\end{lemma}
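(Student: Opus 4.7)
\medskip

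\noindent\textbf{Proof plan.}
The plan is to bound both $|II_i(t)|$ and $|\delta II(t)|$ pointwise in time by the product of an energy factor (coming from the bootstrap) and a density factor (coming from Proposition \ref{MainBootstrapIm1}), which produces a time-integrable decay $\langle t\rangle^{-3}$. Since $\widehat{\rho_i}(t,k)$ is independent of $\xi$, in \eqref{ner3} the derivative $D^a_\xi$ only falls on the smooth factor $\widehat{M_0}(\xi-tk)\,k\cdot(\xi-tk)/|k|^2$. Applying Cauchy--Schwarz in $(k,\xi)$ we obtain
\begin{equation*}
|II_i(t)|\leq 2\sum_{|a|\leq d'}\sqrt{\mathcal{E}^{0,L}_{g_i}(t)}\cdot\Big(\sum_{k\neq 0}|\widehat{\rho_i}(t,k)|^2\,Q_a(t,k)\Big)^{1/2},
\end{equation*}
where $Q_a(t,k):=\int_{\R^d}A_L(t,k,\xi)^2\big|D^a_\xi\big[\widehat{M_0}(\xi-tk)\tfrac{k\cdot(\xi-tk)}{|k|^2}\big]\big|^2\,d\xi$.

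\smallskip

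The second step is to show $Q_a(t,k)\lesssim A(t,k,tk)^2|k|^{-2}$ by a structural constant depending on $d,\lambda_0,\vartheta$. For this I use the submultiplicativity bound \eqref{gu6} with $(k+k',\xi+\xi')=(k,\xi)$ and $(k',\xi')=(0,\xi-tk)$, so $A(t,k,\xi)\leq A(t,k,tk)\,A(t,0,\xi-tk)$. Changing variables $\eta=\xi-tk$ and using $A(t,0,\eta)\leq e^{(\lambda_1+2\delta)\langle\eta\rangle^{1/3}}$, Leibniz expansion distinguishes the term where $D^a_\eta$ hits only $\widehat{M_0}$ (bounded via $|\eta|/|k|$) from the at most one term where a single $\xi$-derivative hits $k\cdot\eta/|k|^2$ (bounded via $|a|/|k|$). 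Both are controlled using the Gevrey bounds on $\widehat{M_0}$ and on $\nabla_v M_0$ from \eqref{M01}, with the slack $\lambda_1+2\delta<\lambda_0$ providing absolute convergence of the $\eta$-integral. This yields the claimed bound on $Q_a$.

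\smallskip

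The third step combines these estimates with the $L^2$ density bound \eqref{pam21} and the bootstrap assumption \eqref{boot2}. Since $|k|^{-1}\leq|k|^{-\beta}=|k|^{-1/2}$ for $k\in\Z^d\setminus\{0\}$, we get
\begin{equation*}
\Big(\sum_{k\neq 0}|\widehat{\rho_i}(t,k)|^2 Q_a(t,k)\Big)^{1/2}\lesssim \big\|A(t,k,tk)|k|^{-1/2}\widehat{\rho_i}(t,k)\big\|_{\ell^2_k}\lesssim (\overline{C}^dC_2)^{1/(d+1)}\overline{\eps}\,\langle t\rangle^{-3}.
\end{equation*}
Together with $\sqrt{\mathcal{E}^{0,L}_{g_i}(t)}\leq 2\overline{C}\,\overline{\eps}$ and $\int_0^\infty\langle s\rangle^{-3}\,ds<\infty$, this produces
\begin{equation*}
\int_{T_1}^t|II_i(s)|\,ds\leq C'\overline{C}\,(\overline{C}^dC_2)^{1/(d+1)}\overline{\eps}^2 = C'\,C_2^{1/(d+1)}\overline{C}^{(2d+1)/(d+1)}\overline{\eps}^2,
\end{equation*}
and the desired bound $\leq\overline{C}^2\overline{\eps}^2/8$ follows as soon as $\overline{C}\geq (8C')^{d+1}C_2$, which is allowed since $\overline{C}$ is free to be chosen large relative to the structural constants.

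\smallskip

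The bound for $\delta II$ follows by the same scheme with two modifications: one copy of $\langle k,\xi\rangle^{-2}$ is paired with $D^a_\xi(\widehat{g_1}-\widehat{g_2})$ to reconstruct $\sqrt{\mathcal{E}^{2,L}_{g_1-g_2}}$, and the second copy is absorbed into the density factor by $\langle k,tk\rangle\lesssim\langle k,\xi\rangle\,\langle\xi-tk\rangle$, which costs a harmless $\langle\xi-tk\rangle^2$ that is dominated by the Gevrey decay of $\widehat{M_0}$ after the change of variables $\eta=\xi-tk$. Applying the second line of \eqref{pam21} then yields $\int_{T_1}^t|\delta II(s)|\,ds\leq\overline{C}^2\eps_0^2/8$. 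The main technical obstacle is the uniform-in-$|a|\leq d'$ control of $Q_a$: one must verify that the Gevrey norm \eqref{M01} suffices to absorb both the derivatives falling on $\widehat{M_0}$ and the weight $A(t,0,\eta)^2$, which is the reason the assumption \eqref{M01} is stated at the level $\mathcal{G}^{\lambda_0,1/3}_{d'}$.
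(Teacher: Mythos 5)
Your argument is correct and follows essentially the same route as the paper's proof: Cauchy--Schwarz, the weight factorization $A(t,k,\xi)\leq A(t,k,tk)\,A(t,0,\xi-tk)$ from \eqref{gu6}, the Gevrey control of $M_0$ from \eqref{M01}, the decayed density bound \eqref{pam21} supplying the integrable $\langle s\rangle^{-3}$, and a final choice of $\overline{C}$ large relative to $C_2$. One small point: rather than splitting $D^a_\xi\big[\widehat{M_0}(\xi-tk)\,k\cdot(\xi-tk)/|k|^2\big]$ by Leibniz, keep the combination $D^a_\eta[\widehat{M_0}(\eta)\eta_j]$ intact (as the paper does via $M_0^\ast$ in \eqref{ner8} and \eqref{ner9.5}), since \eqref{M01} controls exactly this product in weighted $L^2$ through $\|\nabla_vM_0\|_{\mathcal{G}^{\lambda_0,1/3}_{d'}}$ but does not separately control $D^b\widehat{M_0}$ for $2\leq|b|\leq d'$.
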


\begin{proof} Let
\begin{equation}\label{ner8}
M_0^\ast(\eta):=\sum_{|a|\leq d',\, j\in\{1,\ldots,d\}}\big|D^a_\eta\big[\widehat{M_0}(\eta)\eta_j\big]\big|.
\end{equation}
It follows from \eqref{ner3} that
\begin{equation}\label{ner9}
\begin{split}
|II_i(s)|\leq 2\sum_{|a|\leq d'}\sum_{k\in \mathbb{Z}^d\setminus\{0\}}\int_{\R^d}&A(s,k,\xi)|D^a_\xi\widehat{g_i}(s,k,\xi)|\\
&\times\frac{A(s,k,sk)|\widehat{\rho_i}(s,k)|}{|k|}\frac{A(s,k,\xi)}{A(s,k,sk)}M_0^\ast(\xi-sk)\,d\xi.
\end{split}
\end{equation}
Moreover, using \eqref{gu6},
\begin{equation*}
\frac{A(s,k,\xi)}{A(s,k,sk)}M_0^\ast(\xi-sk)\leq A(s,0,\xi-sk)M_0^\ast(\xi-sk)\leq e^{0.95\lambda_0\langle\xi-sk\rangle^{1/3}}M_0^\ast(\xi-sk).
\end{equation*}
It follows from the assumption \eqref{M01} and the definition \eqref{ner8} that
\begin{equation}\label{ner9.5}
\big\|e^{0.95\lambda_0\langle\eta\rangle^{1/3}}M_0^\ast(\eta)\big\|_{L^2_\eta}\lesssim_{d,\lambda_0,\vartheta}1,
\end{equation}
with an implied constant that depends on $d,\lambda_0,\vartheta$. Recall the definition \eqref{pam3} and let
\begin{equation}\label{ner10}
G_{\ast i}(t,k):=\sum_{|a|\leq d'}\big\|\widehat{\mathcal{A}_ag_i}(t,k,\xi)\big\|_{L^2_\xi},\qquad\delta G_{\ast}(t,k):=\sum_{|a|\leq d'}\big\|\langle k,\xi\rangle^{-2}\widehat{\mathcal{A}_a\delta g}(t,k,\xi)\big\|_{L^2_\xi},
\end{equation}
for $t\in[T_1,T_2]$ and $k\in\Z^d$. It follows from \eqref{ner9} and the Cauchy inequality in $\xi$ that
\begin{equation}\label{ner11}
|II_i(s)|\lesssim_{d,\lambda_0,\vartheta}\sum_{k\in \mathbb{Z}^d\setminus\{0\}}G_{\ast i}(s,k)\frac{A(s,k,sk)|\widehat{\rho_i}(s,k)|}{|k|},
\end{equation}
 It follows from the bootstrap assumption \eqref{boot2} that $\|G_{\ast i}\|_{L^\infty_sL^2_k}\lesssim_d \overline{C}\overline{\epsilon}$. Using also \eqref{pam21} and the Cauchy inequality we have
\begin{equation*}
|II_i(s)|\lesssim_{d,\lambda_0,\vartheta}\overline{C}\overline{\eps}\cdot (\overline{C}^dC_2)^{1/(d+1)}\overline{\epsilon}\langle s\rangle^{-3}.
\end{equation*}
The bounds for $|II_i(s)|$ in \eqref{ner7} follow if the constant $\overline{C}$ is taken sufficiently large relative to $C_2$. The bounds for $|\delta II(s)|$ in \eqref{ner7} follow in a similar way.
\end{proof}

\begin{lemma}\label{Lem2ner}
For any $t\in[T_1,T_2]$, $L\geq 4$, and $i\in\{1,2\}$ we have
\begin{equation}\label{ner20}
\int_{T_1}^t\big|IV_i(s)\big|\,ds\leq\overline{\eps}^2\qquad\text{ and }\qquad \int_{T_1}^t\big|\delta IV(s)\big|\,ds\leq\eps_0^2.
\end{equation}
\end{lemma}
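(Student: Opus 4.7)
The plan is to exploit two structural improvements of $IV_i$ over $III_i$, which should allow a direct estimate without absorption into the good term $I_i$. First, since $|a'|=1$, the derivative $D^{a'}_\xi$ collapses the multiplier $l\cdot(\xi-tk)/|l|^2$ into $l_{a'}/|l|^2$, bounded pointwise by $|l|^{-1}$ (in contrast to the potentially large $|\xi-tk|/|l|$ appearing in $III_i$). Second, the second $g_i$ factor carries only $|a-a'|\leq d'-1$ derivatives. I will therefore follow the template of Lemma \ref{Lem1ner}: bound the multiplier pointwise by $|l|^{-1}$, apply the bilinear weight bound \eqref{gu6} to split $A(s,k,\xi)^2$, use the pointwise $Z$-norm bound on $\widehat{\rho_i}$, and reduce to a bilinear $L^2$ estimate on $g_i$.

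Concretely, starting from \eqref{ner5} one obtains
\begin{equation*}
|IV_i(s)|\lesssim \sum_{a,a',k,l}\int_\xi A(s,k,\xi)^2\,|D^a_\xi\widehat{g_i}(s,k,\xi)|\,\frac{|\widehat{\rho_i}(s,l)|}{|l|}\,|D^{a-a'}_\xi\widehat{g_i}(s,k-l,\xi-sl)|\,d\xi.
\end{equation*}
I will then invoke \eqref{gu6} in the form $A(s,k,\xi)^2\leq A(s,k,\xi)A(s,l,sl)A(s,k-l,\xi-sl)e^{-(\lambda_1/4)m^{1/3}}$, with $m:=\min(\langle l,sl\rangle,\langle k-l,\xi-sl\rangle)$, together with the pointwise bound $A(s,l,sl)|\widehat{\rho_i}(s,l)||l|^{-1}\leq\mathcal{Z}^0_{g_i}(s)|l|^{-1/2}\lesssim C_2\overline{\eps}\langle s\rangle^{-6d}|l|^{-1/2}$ from \eqref{pam20}. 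The remaining task is to bound the bilinear quantity
\begin{equation*}
\mathcal{B}_l(s):=\sum_k\int_\xi A(s,k,\xi)|D^a_\xi\widehat{g_i}(s,k,\xi)|\cdot A(s,k-l,\xi-sl)|D^{a-a'}_\xi\widehat{g_i}(s,k-l,\xi-sl)|e^{-(\lambda_1/4)m^{1/3}}\,d\xi
\end{equation*}
by some $\Phi(l)\overline{C}^2\overline{\eps}^2$ with $\sum_l|l|^{-1/2}\Phi(l)\lesssim 1$.

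To control $\mathcal{B}_l(s)$ I will split $e^{-(\lambda_1/4)m^{1/3}}\leq e^{-(\lambda_1/4)\langle l,sl\rangle^{1/3}}+e^{-(\lambda_1/4)\langle k-l,\xi-sl\rangle^{1/3}}$. The first piece has decay in $l$, so Cauchy--Schwarz in $(k,\xi)$ together with the energy bound immediately gives $\lesssim \overline{C}^2\overline{\eps}^2 e^{-(\lambda_1/4)\langle l\rangle^{1/3}}$, clearly summable against $|l|^{-1/2}$. The hard part will be the second piece: the decay lives on the argument of the second $g_i$ factor and does not directly translate into $l$-decay, since after the change of variables $(m,\eta):=(k-l,\xi-sl)$ the bilinear bound is uniform in $l$. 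My plan to overcome this is to exploit the slack between the bootstrap weight $A(s,\cdot)\leq e^{(\lambda_1+2\delta)\langle\cdot\rangle^{1/3}}$ and the initial-data hypothesis $\|g_i(T_1)\|_{\mathcal{G}^{\lambda_1+4\delta,1/3}_{d'}}\leq\overline{\eps}$: a small exponential factor $e^{\delta\langle k\rangle^{1/3}}$ can be attached to the first $g_i$ factor without spoiling its $L^2$ bound, and after halving $e^{-(\lambda_1/4)\langle k-l,\xi-sl\rangle^{1/3}}$ the remaining $e^{-(\lambda_1/8)\langle k-l,\xi-sl\rangle^{1/3}}$ effectively localizes the integrand to $\langle k-l,\xi-sl\rangle\lesssim 1$, which forces $|k|\sim|l|$ and converts $e^{\delta\langle k\rangle^{1/3}}\gtrsim e^{\delta|l|^{1/3}/2}$ into the required $l$-decay. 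Assembling both cases yields $|IV_i(s)|\lesssim \overline{C}^3\overline{\eps}^3\langle s\rangle^{-6d}$, which integrates to $\lesssim \overline{C}^3\overline{\eps}^3\leq\overline{\eps}^2$ since $\overline{C}\leq\overline{\eps}^{-1/8}$.

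For $\delta IV$ I will use the standard decomposition $\widehat{\rho_1}\widehat{g_1}-\widehat{\rho_2}\widehat{g_2}=(\widehat{\rho_1}-\widehat{\rho_2})\widehat{g_1}+\widehat{\rho_2}(\widehat{g_1}-\widehat{g_2})$ and distribute the extra weight $\langle k,\xi\rangle^{-4}$ appearing in \eqref{ner5.3} via \eqref{pam6.2} with $p=2$: in the first piece the share $\langle l,sl\rangle^{-2}$ goes onto the density difference $\widehat{\rho_1}-\widehat{\rho_2}$ and pairs with the $\mathcal{Z}^2$-norm bound \eqref{pam20}, while in the second piece the share $\langle k-l,\xi-sl\rangle^{-2}$ goes onto $\widehat{g_1}-\widehat{g_2}$ and pairs with the energy functional $\mathcal{E}^2_{g_1-g_2}$. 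Both pieces then reduce to the three-step argument above (bilinear weight, $Z$-norm pointwise, case split), yielding the desired $\eps_0^2$ bound.
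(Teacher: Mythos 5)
Your setup (bounding the multiplier by $|l|^{-1}$, splitting the weight via \eqref{gu6}, and separating the two cases according to which factor carries the exponential gain) matches the paper, and your Case 1 is exactly the paper's treatment of the piece with $e^{-c\langle l,sl\rangle^{1/3}}$ decay. The gap is in your Case 2. The claim that ``a small exponential factor $e^{\delta\langle k\rangle^{1/3}}$ can be attached to the first $g_i$ factor without spoiling its $L^2$ bound'' is not justified by the hypotheses: the bootstrap assumption \eqref{boot2} controls $D^a_\xi\widehat{g_i}(s,\cdot)$ in $L^2$ only against the weight $A(s,\cdot)$, and the stronger norm $\mathcal{G}_{d'}^{\lambda_1+4\delta,1/3}$ is assumed only at the initial time $T_1$ in \eqref{boot1}. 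Since the weight $A(s,\cdot)$ decreases in $s$ (this loss of Gevrey radius is the very mechanism of the proof), nothing gives $\|e^{\delta\langle k\rangle^{1/3}}A(s,k,\xi)D^a_\xi\widehat{g_i}(s,k,\xi)\|_{L^2_{k,\xi}}\lesssim\overline{C}\,\overline{\eps}$ at positive times; proving such a bound would amount to running the whole bootstrap at a strictly larger Gevrey index, which is circular. Without that extra factor your localization $|k|\sim|l|$ produces no $l$-decay, and the remaining sum $\sum_{l\neq 0}|l|^{-1/2}$ diverges, so Case 2 does not close as written. The same gap then infects your treatment of $\delta IV$, which you reduce to the same three-step argument.

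The paper closes this case differently, and the fix is available from estimates you already cite. Instead of seeking pointwise decay in $l$, apply Cauchy--Schwarz in $\xi$ only (for fixed $k,l$), which reduces the Case 2 contribution to a trilinear discrete sum of the schematic form $\sum_{k,l}G_{\ast}(s,k)\,e^{-c\langle k-l\rangle^{1/3}}G_{\ast}(s,k-l)\,|\widehat{\mathcal{A}\rho_i}(s,l)|\,|l|^{-1}$, with $G_\ast(s,\cdot)$ the $L^2_\xi$ norms from \eqref{ner10}. Then use Young's inequality for the convolution together with the $L^2_l$ bound \eqref{pam21} on $|\widehat{\mathcal{A}\rho_i}(s,l)|\,|l|^{-1/2}$ (which interpolates the pointwise $Z$-bound at low frequencies with the energy bound at high frequencies and supplies the integrable factor $\langle s\rangle^{-3}$), rather than the pointwise bound \eqref{pam20} alone. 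With this replacement both halves of \eqref{ner20} follow as you outline.
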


\begin{proof} The implied constants in this lemma may depend only on $d$ and $\lambda_0$. We provide all the details only for the slightly harder bounds on the difference of solutions. Notice that
\begin{equation}\label{ner20.1}
\begin{split}
&|\delta IV(s)|\lesssim\sum_{|a|\leq d'}\sum_{|a'|=1,\,a'\leq a}\sum_{k\in \mathbb{Z}^d}\sum_{l\in\Z^d\setminus\{0\}}\int_{\R^d}\langle k,\xi\rangle^{-4}A(s,k,\xi)^2|D^a_\xi\widehat{\delta g}(s,k,\xi)| |l|^{-1}\\
&\qquad\times\big\{|\widehat{\delta\rho}(s,l)||D^{a-a'}_\xi\widehat{g_1}(s,k-l,\xi-sl)|+|\widehat{\rho_2}(s,l)||D^{a-a'}_\xi\widehat{\delta g}(s,k-l,\xi-sl)|\big\}\,d\xi,
\end{split}
\end{equation}
where $\delta g=g_1-g_2$ and $\delta\rho=\rho_1-\rho_2$. We define the functions $\mathcal{A}\delta\rho$ and $\mathcal{A}_a\delta g$ as in \eqref{pam3}, for $s\in[T_1,T_2]$, $k\in\Z^d$, $\xi\in\R^d$, and $|a|\leq d'$. It follows from \eqref{ner20.1} that
\begin{equation*}
\begin{split}
&|\delta IV(s)|\lesssim\sum_{|a|,|a'|\leq d'}\sum_{k\in \mathbb{Z}^d}\sum_{l\in\Z^d\setminus\{0\}}\int_{\R^d}\langle k,\xi\rangle^{-2}|\widehat{\mathcal{A}_a\delta g}(s,k,\xi)| \frac{\langle k,\xi\rangle^{-2}A(s,k,\xi)}{A(s,l,sl)A(s,k-l,\xi-sl)}\\
&\qquad\times\big\{|l|^{-1}|\widehat{\mathcal{A}\delta\rho}(s,l)||\widehat{\mathcal{A}_{a'}g_1}(s,k-l,\xi-sl)|+|l|^{-1}|\widehat{\mathcal{A}\rho_2}(s,l)||\widehat{\mathcal{A}_{a'}\delta g}(s,k-l,\xi-sl)|\big\}\,d\xi.
\end{split}
\end{equation*}
In view of \eqref{gu6},
\begin{equation*}
\frac{\langle k,\xi\rangle^{-2}A(s,k,\xi)}{\langle l,sl\rangle^{-2}A(s,l,sl)\langle k-l,\xi-sl\rangle^{-2}A(s,k-l,\xi-sl)}\lesssim e^{-8\delta\langle l,sl\rangle^{1/3}}+e^{-8\delta\langle k-l,\xi-sl\rangle^{1/3}}.
\end{equation*}
Therefore for any $s\in[T_1,T_2]$ we can estimate
\begin{equation}\label{ner21}
|\delta IV(s)|\lesssim \delta IV_1(s)+\delta IV_2(s),
\end{equation}
where
\begin{equation}\label{ner21.1}
\begin{split}
&\delta IV_1(s):=\sum_{|a|,|a'|\leq d'}\sum_{k\in \mathbb{Z}^d}\sum_{l\in\Z^d\setminus\{0\}}\int_{\R^d}\langle k,\xi\rangle^{-2}|\widehat{\mathcal{A}_a\delta g}(s,k,\xi)|e^{-\delta\langle l,sl\rangle^{1/3}}\\
&\quad\times\frac{|\widehat{\mathcal{A}\delta\rho}(s,l)||\widehat{\mathcal{A}_{a'}g_1}(s,k-l,\xi-sl)|+|\widehat{\mathcal{A}\rho_2}(s,l)||\widehat{\mathcal{A}_{a'}\delta g}(s,k-l,\xi-sl)|}{|l|\langle l,sl\rangle^2\langle k-l,\xi-sl\rangle^2}\,d\xi,
\end{split}
\end{equation}
\begin{equation}\label{ner21.2}
\begin{split}
&\delta IV_2(s):=\sum_{|a|,|a'|\leq d'}\sum_{k\in \mathbb{Z}^d}\sum_{l\in\Z^d\setminus\{0\}}\int_{\R^d}\langle k,\xi\rangle^{-2}|\widehat{\mathcal{A}_a\delta g}(s,k,\xi)|e^{-\delta\langle k-l,\xi-sl\rangle^{1/3}}\\
&\quad\times\frac{|\widehat{\mathcal{A}\delta\rho}(s,l)||\widehat{\mathcal{A}_{a'}g_1}(s,k-l,\xi-sl)|+|\widehat{\mathcal{A}\rho_2}(s,l)||\widehat{\mathcal{A}_{a'}\delta g}(s,k-l,\xi-sl)|}{|l|\langle l,sl\rangle^2\langle k-l,\xi-sl\rangle^2}\,d\xi.
\end{split}
\end{equation}

It follows from \eqref{boot2} that $\|\widehat{\mathcal{A}_ag_1}(t,k,\xi)\|_{L^2_{k,\xi}}\leq 2\overline{C}\overline{\epsilon}$ and $\|\langle k,\xi\rangle^{-2}\widehat{\mathcal{A}_a\delta g}(t,k,\xi)\|_{L^2_{k,\xi}}\leq 2\overline{C}\epsilon_0$ for any $t\in[T_1,T_2]$ and multi-index $a$ with $|a|\leq d'$. Therefore, using the Cauchy inequality in $k,\xi$ and recalling the definition \eqref{rec1},
\begin{equation*}
\delta IV_1(s)\lesssim(\overline{C}\epsilon_0)\sum_{l\in\Z^d\setminus\{0\}}\frac{\overline{C}\overline{\epsilon}|\widehat{\mathcal{A}\delta\rho}(s,l)|+\overline{C}\epsilon_0|\widehat{\mathcal{A}\rho_2}(s,l)|}{|l|\langle l,sl\rangle^2}e^{-\delta\langle l\rangle^{1/3}}\lesssim \overline{C}^3\epsilon_0^2\overline{\eps}\langle s\rangle^{-6d}.
\end{equation*}
Similarly, recalling the definition \eqref{ner10} and using the Cauchy inequality in $\xi$, 
\begin{equation*}
\begin{split}
\delta IV_2(s)&\lesssim\sum_{k\in \mathbb{Z}^d}\sum_{l\in\Z^d\setminus\{0\}}e^{-\delta\langle k-l\rangle^{1/3}}\delta G_{\ast}(s,k)\frac{|\widehat{\mathcal{A}\delta\rho}(s,l)|G_{\ast1}(s,k-l)+|\widehat{\mathcal{A}\rho_2}(s,l)|\delta G_{\ast}(s,k-l)}{|l|\langle l,sl\rangle^2}\\
&\lesssim \overline{C}^3\epsilon_0^2\overline{\eps}\langle s\rangle^{-3}.
\end{split}
\end{equation*}
The last inequality holds since $\|G_{\ast 1}\|_{L^\infty_sL^2_k}\lesssim\overline{C}\overline{\epsilon}$, $\|\delta G_{\ast}\|_{L^\infty_sL^2_k}\lesssim\overline{C}\epsilon_0$, and using also \eqref{pam21}. The desired bounds follow from \eqref{ner21}, since $\overline{C}\leq\overline{\epsilon}^{-1/8}$.
\end{proof}

\begin{lemma}\label{Lem3ner}
For any $t\in[T_1,T_2]$, $L\geq 4$, and $i\in\{1,2\}$ we have
\begin{equation}\label{ner30}
\int_{T_1}^t\big[|III_i(s)|-|I_i(s)|/2\big]\,ds\leq\overline{\eps}^2\qquad\text{ and }\qquad \int_{T_1}^t\big[|\delta III(s)|-|\delta I(s)|/2\big]\,ds\leq\eps_0^2.
\end{equation}
\end{lemma}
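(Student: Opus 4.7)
The strategy is Cauchy--Schwarz in the ``high'' factor $\overline{D^a_\xi\widehat{g_i}(s,k,\xi)}$ in the definition \eqref{ner4} of $III_i$, combined with a bilinear splitting of the sum over $l\in\Z^d\setminus\{0\}$ according to whether the density mode $(l,sl)$ or the profile mode $(k-l,\xi-sl)$ carries the larger frequency. In the \emph{transport} region $\{|l,sl|\geq|k-l,\xi-sl|\}$ the bilinear weight estimate \eqref{gu6} yields the decaying factor $e^{-(\lambda_1/4)\langle k-l,\xi-sl\rangle^{1/3}}$; together with the pointwise $Z$-norm bound \eqref{pam20} for $\widehat{\rho_i}$ and the $L^2$ bound \eqref{pam21}, the time-integrated transport contribution is bounded by $\lesssim\overline{C}^3\overline{\eps}^3\leq\overline{\eps}^2$, since $\overline{C}\leq\overline{\eps}^{-1/8}$. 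The reaction region $\{|l,sl|<|k-l,\xi-sl|\}$ is delicate because $A_L(s,k,\xi)\approx A_L(s,k-l,\xi-sl)$ and the excess weight must be absorbed by the good energy term $\tfrac12|I_i(s)|$.

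\textbf{The reaction region.} First, the commutator bound \eqref{gu6.5} gives $A_L(s,k,\xi)=A_L(s,k-l,\xi-sl)+R$ with $|R|\lesssim A(s,l,sl)A(s,k-l,\xi-sl)e^{-(\lambda_1/4)\langle l,sl\rangle^{1/3}}\langle k,\xi\rangle^{-2/3}$; the $R$-piece is treated as in the transport region. For the principal piece, set
$$w(s,k,\xi):=\sqrt{-\dot{A_L}(s,k,\xi)\,A_L(s,k,\xi)},$$
which is real by \eqref{gu3.3}. Factorizing $A_L(s,k,\xi)^2=w(s,k,\xi)\cdot A_L(s,k,\xi)^2/w(s,k,\xi)$ and applying Cauchy--Schwarz in the resulting pairing (with weights chosen summably in $l$), one arrives at $|\text{reaction}|\leq\tfrac12|I_i(s)|+\mathcal R_i(s)$, where the $\tfrac12|I_i(s)|$ term is absorbed exactly in \eqref{ner6}, and after the change of variables $(k,\xi)\mapsto(k-l,\xi-sl)$ and the $Z$-bound $|\widehat{\rho_i}(s,l)|\lesssim\overline{C}\overline{\eps}|l|^{1/2}A(s,l,sl)^{-1}\langle s\rangle^{-6d}$ from \eqref{pam20}, the residual is schematically
$$\mathcal R_i(s)\lesssim\overline{C}^2\overline{\eps}^2\sum_{l\neq 0}\sum_k\int\frac{(s|k|+|\xi|)^2\,A(s,l,sl)^{-2}}{|l|\,|\dot{\lambda}(s,|k+l,\xi+sl|)|\,\langle s\rangle^{12d}}\,A_L(s,k,\xi)^2\!\sum_{|a|\leq d'}|D^a_\xi\widehat{g_i}(s,k,\xi)|^2\,d\xi.$$

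\textbf{Main obstacle.} The remaining task is to prove $\int_{T_1}^t\mathcal R_i(s)\,ds\leq\overline{\eps}^2$, which reduces to a uniform pointwise bound for the kernel
$$\sum_{l\neq 0}\int_{T_1}^t\frac{(s|k|+|\xi|)^2\,A(s,l,sl)^{-2}}{|l|\,|\dot{\lambda}(s,|k+l,\xi+sl|)|\,\langle s\rangle^{12d}}\,ds\lesssim 1.$$
By \eqref{gu3.3}, the \emph{second} summand in $-\dot{\lambda}$ provides the lower bound $|\dot{\lambda}(s,r)|\geq\delta^2(1+sr^{-2/3})^{-\delta-1}r^{-1/3}$, whose profile on the resonant slab $|\xi-sk|\lesssim\langle k\rangle$ is tuned precisely to defeat the $|l|^{1/2}$ singularity carried by the $Z$-norm of the density. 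The resulting kernel estimate is analogous to \eqref{pam16}, but performed in the presence of the additional $|\dot{\lambda}|^{-1}$ factor; this is the heart of the argument and the main obstacle, and it relies crucially on the non-product structure of the weight \eqref{gu1}: a product weight of the form \eqref{ide4} would not yield the needed $(1+s\langle r\rangle^{-2/3})^{-\delta-1}$ decay at resonance and would fail to close at the endpoint Gevrey-3 regularity. The estimate for $\delta III(s)$ runs along identical lines; the extra $\langle k,\xi\rangle^{-4}$ factor provides additional room absorbed into the difference $\eps_0^2$ but plays no structural role.
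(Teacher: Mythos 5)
Your decomposition into transport/reaction regions and your treatment of the $R$-piece (via \eqref{gu6.5}, gaining $\langle k,\xi\rangle^{-2/3}$ and leaving a $\langle k,\xi\rangle^{1/3}$ loss absorbable by the Cauchy--Kowalevski term) matches the paper's $J_{11}$/$J_{12}$ analysis. But the argument has a fatal gap at the ``principal piece'': you propose to estimate the term with weight $A_L(s,k,\xi)A_L(s,k-l,\xi-sl)$ by Cauchy--Schwarz against $w=\sqrt{-\dot A_L A_L}$, whereas the paper shows this piece \emph{vanishes identically}. Taking the real part, using $\overline{\widehat{\rho_1}(t,l)}=\widehat{\rho_1}(t,-l)$ (reality of $\rho$) and relabeling $(k,\xi,l)\mapsto(k-l,\xi-tl,-l)$, the symmetrized trilinear form with the symmetric weight $\langle k,\xi\rangle^{-2}A_L(t,k,\xi)\cdot\langle k-l,\xi-tl\rangle^{-2}A_L(t,k-l,\xi-tl)$ and multiplier $l\cdot(\xi-tk)/|l|^2$ is antisymmetric, hence zero. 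This cancellation is essential: the multiplier carries a full derivative $|\xi-sk|\lesssim\langle s\rangle\langle k,\xi\rangle$, while the CK term only supplies $w(s,k,\xi)w(s,k-l,\xi-sl)/(A_LA_L)\approx|\dot\lambda|\lesssim(1+s)^{-\delta-1}\langle k,\xi\rangle^{1/3}$. Your residual $\mathcal R_i(s)$ therefore retains an uncompensated factor of order $\langle k,\xi\rangle^{2/3}$ relative to the energy density, and the claimed uniform kernel bound $\sum_l\int(s|k|+|\xi|)^2|l|^{-1}|\dot\lambda|^{-1}A(s,l,sl)^{-2}\langle s\rangle^{-12d}\,ds\lesssim1$ is false: its left-hand side grows without bound in $\langle k,\xi\rangle$ no matter which summand of $-\dot\lambda$ you invoke.

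Relatedly, you misattribute the absorption mechanism. The lower bound you quote, $|\dot\lambda(s,r)|\geq\delta^2(1+sr^{-2/3})^{-\delta-1}r^{-1/3}$, comes from the second (non-product) summand of \eqref{gu1} and is \emph{weaker} in frequency ($r^{-1/3}$ versus $r^{1/3}$); using it would worsen the loss. In the paper the energy absorption of $J_{12}$ uses the first summand, $|\delta I(s)|\gtrsim(1+s)^{-\delta-1}\sum\int\langle k,\xi\rangle^{1/3}(\cdots)$, with the $(1+s)^{\delta+1}$ loss paid for by the $\langle s\rangle^{-6d}$ decay of the density from \eqref{pam20}; the novel second summand is exploited only in the density-equation estimates \eqref{pam16}, not here. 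Finally, for $\delta III$ the cross term $\widehat{\delta\rho}(s,l)D^a_\xi\widehat{g_2}(s,k-l,\xi-sl)$ does not fall under the antisymmetry and must be estimated directly; it closes only because of the $\langle k,\xi\rangle^{-4}$ weight in $\mathcal E^2$, so this factor is structural rather than ``additional room.''
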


\begin{proof} As before, the implied constants in this proof are allowed to depend on $d$ and $\lambda_0$. We provide all the details only for the slightly harder bounds on the difference of solutions.

We notice that there is derivative loss in the term $\delta III$, coming from the factor $\xi-tk$, which we can fortunately eliminate by symmetrization. For this, we notice that
\begin{equation*}
\begin{split}
\Re\Big\{&\sum_{k\in \mathbb{Z}^d}\sum_{l\in\Z^d\setminus\{0\}}\int_{\R^d}\langle k,\xi\rangle^{-2}A_L(t,k,\xi)\cdot \langle k-l,\xi-tl\rangle^{-2}A_L(t,k-l,\xi-tl)\\
&\qquad\qquad\times \Big[\,\overline{D^a_\xi\widehat{\delta g}(t,k,\xi)}\widehat{\rho_1}(t,l)D^a_\xi\widehat{\delta g}(t,k-l,\xi-tl)\frac{l\cdot(\xi-tk)}{|l|^2}\Big]\,d\xi\Big\}=0,
\end{split}
\end{equation*}
for any $t\in[T_1,T_2]$ and any multi-index $a$ satisfying $|a|\leq d'$. This follows easily by expanding $\Re z=(z+\overline{z})/2$ and observing that $\overline{\widehat{\rho_1}(t,l)}=\widehat{\rho_1}(t,-l)$ since $\rho_1$ is real-valued. Therefore
\begin{equation}\label{ner30.1}
|\delta III(s)|\lesssim \delta III_1(s)+\delta III_2(s)
\end{equation}
where
\begin{equation}\label{ner30.2}
\begin{split}
\delta III_1(s):=\sum_{|a|\leq d'}&\sum_{k\in \mathbb{Z}^d}\sum_{l\in\Z^d\setminus\{0\}}\int_{\R^d}\langle k,\xi\rangle^{-2}A_L(s,k,\xi)\Big|\frac{A_L(s,k,\xi)}{\langle k,\xi\rangle^2}-\frac{A_L(s,k-l,\xi-sl)}{\langle k-l,\xi-sl\rangle^2}\Big|\\
&\times\big|D^a_\xi\widehat{\delta g}(s,k,\xi)\big|\,|\widehat{\rho_1}(s,l)|\,\big|D^a_\xi\widehat{\delta g}(s,k-l,\xi-sl)\big|\frac{|\xi-sk|}{|l|}\,d\xi,
\end{split}
\end{equation}
\begin{equation}\label{ner30.3}
\begin{split}
\delta III_2(s):=\sum_{|a|\leq d'}&\sum_{k\in \mathbb{Z}^d}\sum_{l\in\Z^d\setminus\{0\}}\int_{\R^d}\langle k,\xi\rangle^{-4}A_L(s,k,\xi)^2\\
&\times\big|D^a_\xi\widehat{\delta g}(s,k,\xi)\big||\widehat{\delta\rho}(s,l)|\,\big|D^a_\xi\widehat{g_2}(s,k-l,\xi-sl)\big|\frac{|\xi-sk|}{|l|}\,d\xi.
\end{split}
\end{equation}

{\bf{Step 1.}} We bound first the contribution of the term $\delta III_1$. Recalling the definitions \eqref{pam3} and letting $\widehat{\mathcal{A}_{a,L}\delta g}(s,k,\xi):=A_L(s,k,\xi)D^a_\xi\widehat{\delta g}(s,k,\xi)$, we have
\begin{equation}\label{ner31}
\begin{split}
\delta III_1(s)\lesssim &\sum_{|a|\leq d'}\sum_{k\in \mathbb{Z}^d}\sum_{l\in\Z^d\setminus\{0\}}\int_{\R^d}\frac{|\widehat{\mathcal{A}_{a,L}\delta g}(s,k,\xi)|}{\langle k,\xi\rangle^2}\frac{|\widehat{\mathcal{A}_{a,L}\delta g}(s,k-l,\xi-sl)|}{\langle k-l,\xi-sl\rangle^2}\frac{|\widehat{\mathcal{A}\rho_1}(s,l)|}{|l|}\\
&\qquad\times\Big|\frac{A_L(s,k,\xi)}{\langle k,\xi\rangle^2}-\frac{A_L(s,k-l,\xi-sl)}{\langle k-l,\xi-sl\rangle^2}\Big|\frac{\langle k-l,\xi-sl\rangle^2|\xi-sk|}{A(s,l,sl)A_L(s,k-l,\xi-sl)}\,d\xi.
\end{split}
\end{equation}

We bound now the expression in the second line of \eqref{ner31}. If $10|l,sl|\geq|k-l,\xi-sl|$ then we use \eqref{gu6} to estimate
\begin{equation*}
\begin{split}
\Big|\frac{A_L(s,k,\xi)}{\langle k,\xi\rangle^2}-\frac{A_L(s,k-l,\xi-sl)}{\langle k-l,\xi-sl\rangle^2}\Big|\frac{\langle k-l,\xi-sl\rangle^2|\xi-sk|}{A(s,l,sl)A_L(s,k-l,\xi-sl)}&\lesssim |\xi-sk|e^{-2\delta |k-l,\xi-sl|^{1/3}}\\
&\lesssim \langle s\rangle e^{-\delta |k-l,\xi-sl|^{1/3}}.
\end{split}
\end{equation*}
On the other hand, if $|l,sl|\leq|k-l,\xi-sl|/10$ and $|l|\geq 1$ then we use \eqref{gu6.5} to estimate
\begin{equation*}
\begin{split}
\Big|\frac{A_L(s,k,\xi)}{\langle k,\xi\rangle^2}-\frac{A_L(s,k-l,\xi-sl)}{\langle k-l,\xi-sl\rangle^2}\Big|\frac{\langle k-l,\xi-sl\rangle^2|\xi-sk|}{A(s,l,sl)A_L(s,k-l,\xi-sl)}&\lesssim |\xi-sk|e^{-2\delta\langle l,sl\rangle^{1/3}}\langle k,\xi\rangle^{-2/3}\\
&\lesssim e^{-\delta\langle l,sl\rangle^{1/3}}\langle k,\xi\rangle^{1/3}.
\end{split}
\end{equation*}
Therefore
\begin{equation}\label{ner32}
\begin{split}
\delta III_1(s)&\lesssim J_{11}(s)+J_{12}(s),\\
J_{11}(s):=\sum_{|a|\leq d'}&\sum_{k\in \mathbb{Z}^d}\sum_{l\in\Z^d\setminus\{0\}}\int_{\R^d}\frac{|\widehat{\mathcal{A}_{a,L}\delta g}(s,k,\xi)|}{\langle k,\xi\rangle^2}\frac{|\widehat{\mathcal{A}_{a,L}\delta g}(s,k-l,\xi-sl)|}{\langle k-l,\xi-sl\rangle^2}\frac{|\widehat{\mathcal{A}\rho_1}(s,l)|}{|l|}\\
&\qquad\qquad\qquad\qquad\times \langle s\rangle e^{-\delta |k-l|^{1/3}}\,d\xi,\\
J_{12}(s):=\sum_{|a|\leq d'}&\sum_{k\in \mathbb{Z}^d}\sum_{l\in\Z^d\setminus\{0\}}\int_{\R^d}\frac{|\widehat{\mathcal{A}_{a,L}\delta g}(s,k,\xi)|}{\langle k,\xi\rangle^2}\frac{|\widehat{\mathcal{A}_{a,L}\delta g}(s,k-l,\xi-sl)|}{\langle k-l,\xi-sl\rangle^2}\frac{|\widehat{\mathcal{A}\rho_1}(s,l)|}{|l|}\\
&\qquad\qquad\qquad\qquad\times e^{-\delta\langle l,sl\rangle^{1/3}}\langle k,\xi\rangle^{1/6}\langle k-l,\xi-sl\rangle^{1/6}\,d\xi.
\end{split}
\end{equation}

We use the Cauchy inequality (in $\xi$), the definition \eqref{ner10}, and \eqref{pam21} to estimate
\begin{equation}\label{ner33}
J_{11}(s)\lesssim \sum_{k\in \mathbb{Z}^d}\sum_{l\in\Z^d\setminus\{0\}}(\delta G_{\ast})(s,k)(\delta G_{\ast})(s,k-l)\frac{|\widehat{\mathcal{A}\rho_1}(s,l)|\langle s\rangle}{|l|}e^{-\delta |k-l|^{1/3}}\lesssim\overline{C}^3\epsilon_0^2\overline{\eps}\langle s\rangle^{-2}.
\end{equation}
Moreover, the definition \eqref{ner2.3} and the identity \eqref{gu3.3} show that
\begin{equation*}
|\delta I(s)|\gtrsim (1+s)^{-\delta-1}\sum_{|a|\leq d'}\sum_{k\in \mathbb{Z}^d}\int_{\R^d}\langle k,\xi\rangle^{-4}A_L(s,k,\xi)^2\langle k,\xi\rangle^{1/3}\big|D^a_\xi\widehat{\delta g}(s,k,\xi)\big|^2\,d\xi.
\end{equation*}
Therefore $J_{12}(s)\lesssim (\overline{C}\overline{\epsilon})|\delta I(s)|$, using \eqref{pam20} and the Cauchy inequality in $(k,\xi)$. Finally, using also the bounds \eqref{ner32}--\eqref{ner33} we see that
\begin{equation}\label{ner35}
\int_0^t\big[|\delta III_1(s)|-|\delta I(s)|/2\big]\,ds\leq\eps_0^2/2.
\end{equation}

{\bf{Step 2.}} We bound now the contribution of $\delta III_2$. Recalling \eqref{pam3} and \eqref{ner10}, we have
\begin{equation}\label{ner36}
\begin{split}
\delta III_2(s)\lesssim &\sum_{|a|\leq d'}\sum_{k\in \mathbb{Z}^d}\sum_{l\in\Z^d\setminus\{0\}}\int_{\R^d}\frac{|\widehat{\mathcal{A}_a\delta g}(s,k,\xi)|}{\langle k,\xi\rangle^2}|\widehat{\mathcal{A}_a g_{2}}(s,k-l,\xi-sl)|\frac{|\widehat{\mathcal{A}\delta \rho}(s,l)|}{|l|\langle l,sl\rangle^2}\\
&\qquad\qquad\times\frac{\langle l,sl\rangle^2|\xi-sk|}{\langle k,\xi\rangle^2}\frac{A(s,k,\xi)}{A(s,l,sl)A(s,k-l,\xi-sl)}\,d\xi.
\end{split}
\end{equation}
It follows easily from \eqref{gu6} that
\begin{equation*}
\frac{\langle l,sl\rangle^2|\xi-sk|}{\langle k,\xi\rangle^2}\frac{A(s,k,\xi)}{A(s,l,sl)A(s,k-l,\xi-sl)}\lesssim \langle s\rangle\big[e^{-8\delta\langle l,sl\rangle^{1/3}}+e^{-8\delta\langle k-l,\xi-sl\rangle^{1/3}}\big].
\end{equation*}
As before, we can use the bootstrap assumptions \eqref{boot2}, the bounds \eqref{pam21}, and the Cauchy inequality to see that $|\delta III_2(s)|\lesssim \overline{C}^3\overline{\eps}\eps_0^2\langle s\rangle^{-2}$ for any $s\in[T_1,T_2]$. The desired bounds on the difference of solutions in \eqref{ner30} follow using also \eqref{ner35}.
\end{proof}

\section{Proof of Proposition \ref{MainBootstrapInfin}} \label{pamA1}

The proof of Proposition \ref{MainBootstrapInfin} is similar to the proof of Proposition \ref{MainBootstrap} in the previous section. As before, the constant $\overline{B}$ depends only on $d,\lambda_0,\vartheta$, and is thought of as much larger than other structural constants like $C_0^\ast$ in Lemma \ref{GreenF}, $C_0$ in Lemma \ref{g*}, and $B_1, B_2$ in Lemma \ref{LemInfin} and Proposition \ref{BootstrapInfin1} below. 

The conclusions follow from Propositions \ref{BootstrapInfin1} and \ref{BootstrapInfin4} below. We prove first bounds on the nonlinearities $\mathcal{N}'_i$ defined in \eqref{infin0}.

\begin{lemma}\label{LemInfin}
Assume that $g_1,g_2\in X^\sharp_{[T_1,T_2]}$ are as in Proposition \ref{MainBootstrapInfin}. Then
\begin{equation}\label{infin10}
\begin{split}
\sup_{t\in[T_1,T_2],\,k\in\Z^d\setminus\{0\}}A^\sharp(t,k,tk)|\widehat{\mathcal{N}'_i}(t,k)||k|^{-1/2}\langle t\rangle^{6d}&\leq B_1\overline{\theta},\\
\sup_{t\in[T_1,T_2],\,k\in\Z^d\setminus\{0\}}\langle k,tk\rangle^{-2}A^\sharp(t,k,tk)|(\widehat{\mathcal{N}'_1}-\widehat{\mathcal{N}'_2})(t,k)||k|^{-1/2}\langle t\rangle^{6d}&\leq B_1\theta_0,
\end{split}
\end{equation}
for $i\in\{1,2\}$ and a constant $B_1=B_1(d,\lambda_0)$ sufficiently large.
\end{lemma}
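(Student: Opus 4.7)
The plan is to follow closely the strategy of Lemma \ref{LemN}, exploiting the time-reversal symmetry between the weights $A$ and $A^\sharp$ expressed in \eqref{gu4}. First I would decompose $\widehat{\mathcal{N}'_i}(t,k)=\widehat{\mathcal{N}'_{i0}}(t,k)+\widehat{\mathcal{N}'_{i1}}(t,k)$ where $\widehat{\mathcal{N}'_{i0}}(t,k):=\widehat{g_i}(T_2,k,tk)$ is the boundary contribution and $\widehat{\mathcal{N}'_{i1}}$ is the bilinear integral term in \eqref{infin0}. For $\mathcal{N}'_{i0}$ one observes that, by \eqref{gu3.1}, $A^\sharp(t,k,tk)^2\leq e^{2(\lambda_1+4\delta)\langle k,tk\rangle^{1/3}}e^{-4\delta\langle k,tk\rangle^{1/3}}$, and since $|k|\geq 1$ for $k\neq 0$ one has $\langle k,tk\rangle^{1/3}\geq\langle t\rangle^{1/3}$. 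Applying the Sobolev bound of Lemma \ref{g*} at time $T_2$ together with the assumption \eqref{infin6}, one obtains
\begin{equation*}
A^\sharp(t,k,tk)|\widehat{\mathcal{N}'_{i0}}(t,k)||k|^{-1/2}\langle t\rangle^{6d}\leq (B_1/2)\overline\theta,
\end{equation*}
and similarly for the difference $\widehat{\mathcal{N}'_{10}}-\widehat{\mathcal{N}'_{20}}$ with $\overline\theta$ replaced by $\theta_0$.

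Next I would handle the integral term $\mathcal{N}'_{i1}$ by introducing the multiplier operators $\mathcal{A}^\sharp_a$ and $\mathcal{A}^\sharp$ built from $A^\sharp$ in the manner of \eqref{pam3}, together with the pointwise quantity $G^\sharp_{\ast 0 i}(s,n):=\sup_\xi|A^\sharp(s,n,\xi)\widehat{g_i}(s,n,\xi)|$. Applying the product rule \eqref{gu6} for $A^\sharp$ exactly as in \eqref{pam6.2}, the bound for $\widehat{\mathcal{N}'_{i1}}$ reduces to showing that the kernel estimate
\begin{equation}\label{infkernel}
\sum_{l\in\Z^d\setminus\{0\}}\int_t^{T_2}\frac{|s-t||k|^{1/2}\langle t\rangle^{6d}}{|l|^{1/2}\langle s\rangle^{6d}}\frac{A^\sharp(t,k,tk)}{A^\sharp(s,k,tk)}\bigl[e^{-8\delta\langle l,sl\rangle^{1/3}}+e^{-8\delta\langle k-l,tk-sl\rangle^{1/3}}\bigr]\,ds\lesssim 1
\end{equation}
holds uniformly for $t\in[T_1,T_2]$ and $k\in\Z^d\setminus\{0\}$. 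The identity \eqref{gu4} gives the crucial factorization
\begin{equation*}
\frac{A^\sharp(t,k,tk)}{A^\sharp(s,k,tk)}=e^{-[\lambda^\sharp(s,|k,tk|)-\lambda^\sharp(t,|k,tk|)]}=e^{-[\lambda(t,|k,tk|)-\lambda(s,|k,tk|)]},
\end{equation*}
so the ratio is bounded by the same expression that appears on the right-hand side of \eqref{pam8.5} once one interchanges the roles of $s$ and $t$. In other words, after the change of variable $s\mapsto t+(s-t)$, the kernel in \eqref{infkernel} is literally the kernel of \eqref{pam7} with $t$ replaced by $s$ and $s$ replaced by $t$; the bound \eqref{infkernel} follows by repeating the three-step argument of Steps 1--3 in the proof of Lemma \ref{LemN} verbatim, with the ``resonant region'' now being $s\in[t,\min(T_2,(t-1)/0.99)]\cap[0,\infty)$ (or, for $t$ bounded, small).

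I expect the hardest step to be the direct analog of Step 3 in Lemma \ref{LemN}, namely the near-diagonal regime where $s$ is close to $t$ and the second exponential is governed by the small denominator $(k-l,tk-sl)$. Just as in the initial-value case, the first term in \eqref{gu4} is insufficient to produce the required decay in this region; one must instead extract the estimate $\lambda^\sharp(s,|k,tk|)-\lambda^\sharp(t,|k,tk|)\geq \delta^3|s-t|t^{-1/3}|k|^{-1/3}(1+t|k|^{-2})^{-1/3-\delta}$ from the second term, and then decompose according to whether $(s-t)|k|$ lies in the interval $[t|a|/2,2t|a|]$ after the change of variable $l=k-a$. These are exactly the same computations as in \eqref{pam17}--\eqref{pam18}, and the required bound \eqref{infin10} (both for $\mathcal{N}'_i$ and for $\mathcal{N}'_1-\mathcal{N}'_2$, using the $p=2$ version of \eqref{pam6.2}) then follows from the bootstrap assumptions \eqref{infin7} via the Cauchy--Schwarz argument that produced \eqref{pam2}.
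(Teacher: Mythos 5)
Your proposal is correct and follows essentially the same route as the paper: reduce to the kernel bound $\sum_{l}\int_t^{T_2}\frac{|t-s||k|^{1/2}\langle t\rangle^{6d}}{|l|^{1/2}\langle s\rangle^{6d}}\frac{A^\sharp(t,k,tk)}{A^\sharp(s,k,tk)}[\cdots]\,ds\lesssim 1$ and prove it by the same three-step argument, with Step 3 using the second term of \eqref{gu4} and the $(s-t)|k|$ versus $t|a|$ dichotomy exactly as you describe. The only caveat is that the transfer is not quite ``verbatim'' under $s\leftrightarrow t$ (the factor $\langle t\rangle^{6d}/\langle s\rangle^{6d}$ and the frequency $|k,tk|$ do not swap, the non-resonant condition becomes $|k-l|\geq|k|/10$, and one needs integrability on the unbounded interval $s\in[t,\infty)$), but these are routine adjustments and you have correctly isolated the genuinely delicate step.
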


\begin{proof} This is similar to the proof of Lemma \ref{LemN}, since the weights $A^\sharp$ satisfy similar bounds as the weights $A$. The only non-trivial difference is that we need to replace the bounds \eqref{pam7} with different uniform bounds: for any $t\in[0,\infty)$ and $k\in\Z^d\setminus\{0\}$ we have
\begin{equation}\label{infin17}
\sum_{l\in\Z^d\setminus\{0\}}\int_t^{\infty}\frac{|t-s||k|^{1/2}\langle t\rangle^{6d}}{|l|^{1/2}\langle s\rangle^{6d}}\frac{A^\sharp(t,k,tk)}{A^\sharp(s,k,tk)}\big[e^{-8\delta\langle l,sl\rangle^{1/3}}+e^{-8\delta\langle k-l,tk-sl\rangle^{1/3}}\big]\,ds\lesssim 1.
\end{equation}
As before, we prove these bounds in three steps.

{\bf{Step 1.}} We show first that
\begin{equation}\label{infin20}
\begin{split}
\sum_{l\in\Z^d\setminus\{0\}}\int_t^\infty\frac{|t-s||k|^{1/2}\langle t\rangle^{6d}}{|l|^{1/2}\langle s\rangle^{6d}}\frac{A^\sharp(t,k,tk)}{A^\sharp(s,k,tk)}e^{-\delta(s^{1/3}+|l|^{1/3})}\,ds\lesssim 1.
\end{split}
\end{equation}
This is similar to the proof of \eqref{pam8}. The definition \eqref{infin3} shows that 
\begin{equation}\label{infin21}
\lambda^\sharp(s,r)-\lambda^\sharp(t,r)=\delta^2\int_t^s\big[(1+u)^{-\delta-1}\langle r\rangle^{1/3}+\big(1+u\langle r\rangle^{-2/3}\big)^{-\delta-1}\langle r\rangle^{-1/3}\big]\,du,
\end{equation}
if $0\leq t\leq s$ and $r\in[0,\infty)$. Therefore
\begin{equation}\label{infin22}
\frac{A^\sharp(t,k,tk)}{A^\sharp(s,k,tk)}=e^{\lambda^\ast(t,|k,tk|)-\lambda^\ast(s,|k.tk|)}\leq e^{-\delta^2|t-s|(1+s)^{-\delta-1}|k|^{1/3}\langle t\rangle^{1/3}}.
\end{equation}

Therefore, if $k,l\in\Z^d\setminus\{0\}$, and $s\geq t\geq 0$  then
\begin{equation*}
\begin{split}
\frac{|t-s||k|^{1/2}\langle t\rangle^{6d}}{|l|^{1/2}\langle s\rangle^{6d}}&\frac{A^\sharp(t,k,tk)}{A^\sharp(s,k,tk)}e^{-\delta(s^{1/3}+|l|^{1/3})}\\
&\lesssim e^{-\delta|l|^{1/3}}e^{-\delta s^{1/3}}|t-s||k|^{1/2}(1+|t-s||k|^{1/3}\langle s\rangle^{-1-\delta})^{-4}\\
&\lesssim e^{-\delta|l|^{1/3}}e^{-\delta s^{1/3}/2}|t-s||k|^{1/2}(1+|t-s||k|^{1/3})^{-4}.
\end{split}
\end{equation*}
The desired bounds \eqref{infin20} follow.

{\bf{Step 2.}} We show now that for any $t\geq 0$ and $k\in\Z^d\setminus\{0\}$ we have
\begin{equation}\label{infin24}
\sum_{l\in\Z^d\setminus\{0\}}\int_t^\infty\mathbf{1}_{\mathcal{R}'}(l,s)\frac{|t-s||k|^{1/2}\langle t\rangle^{6d}}{|l|^{1/2}\langle s\rangle^{6d}}\frac{A^\sharp(t,k,tk)}{A^\sharp(s,k,tk)}e^{-\delta(|k-l|^{1/3}+|tk-sl|^{1/3})}\,ds\lesssim 1,
\end{equation}
where
\begin{equation}\label{infin25}
\mathcal{R}':=\big\{(l,s)\in(\Z^d\setminus\{0\})\times[t,\infty):\,s\geq 1.01(t-1)\,\,\text{ or }\,\, |k-l|\geq |k|/10\big\}.
\end{equation}

Recall the identity \eqref{infin21}. Therefore, if $k,l\in\Z^d\setminus\{0\}$ and $s\geq \max\{t,1.01(t-1)\}$ then
\begin{equation}\label{infin26}
\frac{|t-s|\langle t\rangle^{6d}}{\langle s\rangle^{6d}}\frac{A^\sharp(t,k,tk)}{A^\sharp(s,k,tk)}e^{-\delta |tk-sl|^{1/3}}\lesssim \frac{|t-s|\langle t\rangle^{6d}}{\langle s\rangle^{6d}}e^{-\delta^4(1+t)^{1/4}}.
\end{equation}
Moreover, if $k,l\in\Z^d\setminus\{0\}$, $t\leq s\leq 1.01(t-1)$, and $|k-l|\geq |k|/10$ then $|tk-sl|=|s(k-l)+(t-s)k|\geq s|k-l|/2$, and the bounds \eqref{infin26} follow in this case as well. The desired bounds \eqref{infin24} follow from \eqref{pam12} and \eqref{infin26}.

{\bf{Step 3.}} Finally, we show that
\begin{equation}\label{infin28}
\sum_{l\in\Z^d\setminus\{0\}}\int_t^\infty(1-\mathbf{1}_{\mathcal{R}'}(l,s))\frac{|t-s||k|^{1/2}\langle t\rangle^{6d}}{|l|^{1/2}\langle s\rangle^{6d}}\frac{A^\sharp(t,k,tk)}{A^\sharp(s,k,tk)}e^{-\delta(|k-l|^{1/3}+|tk-sl|^{1/3})}\,ds\lesssim 1,
\end{equation}
where $\mathcal{R}'$ is as in \eqref{infin25}. We make the change of variables $l=k-a$, so it suffices to prove that if $k\in\Z^d\setminus\{0\}$ and $t\in[10,\infty]$ then
\begin{equation}\label{infin29}
\begin{split}
\sum_{a\in\Z^d,\,|a|\leq |k|/8}\int_t^{1.01t}\frac{|t-s|A^\sharp(t,k,tk)}{A^\sharp(s,k,tk)}e^{-\delta(|a|^{1/3}+|sa+(t-s)k|^{1/3})}\,ds\lesssim 1.
\end{split}
\end{equation}

We examine the identity \eqref{gu4} and use the second term in the right-hand side to see that
\begin{equation*}
\begin{split}
\lambda^\sharp(s,|k,tk|)-\lambda^\sharp(t,|k,tk|)&\geq \delta^2|t-s|(1+s\langle k,tk\rangle^{-2/3})^{-1-\delta}\langle k,tk\rangle^{-1/3}\\
&\geq\delta^3|t-s| t^{-1/3}|k|^{-1/3}(1+t^{1/3}|k|^{-2/3})^{-1-\delta},
\end{split}
\end{equation*}
if $t\geq 10$, $s\in[t,1.01t]$, and $k\in\Z^d\setminus\{0\}$. Therefore
\begin{equation}\label{infin30}
\frac{A^\sharp(t,k,tk)}{A^\sharp(s,k,tk)}\leq e^{-\delta^4|t-s|t^{-1/3}|k|^{-1/3}(1+t|k|^{-2})^{-1/3-\delta}}.
\end{equation}
Since $|sa+(t-s)k|\geq \big|s|a|-(s-t)|k|\big|$, for \eqref{pam16} it suffices to show that if $a\in\Z^d$, $k\in\Z^d\setminus\{0\}$, $t\in[10,T]$, and $|a|\leq|k|/8$ then
\begin{equation}\label{infin31}
\begin{split}
&\int_{t}^{1.01t}K'_a(t,k,s)\,ds\lesssim 1,\\
&K'_a(t,k,s):=|t-s|e^{-\delta^4|t-s|t^{-1/3}|k|^{-1/3}(1+t|k|^{-2})^{-1/3-\delta}}e^{-\delta|s|a|-(s-t)|k||^{1/3}}e^{-\delta |a|^{1/3}/2}.
\end{split}
\end{equation}

The bounds \eqref{infin31} are easy in the case $a=0$, since $|K'_0(t,k,s)|\lesssim |t-s|e^{-\delta|t-s|^{1/3}}\lesssim \langle t-s\rangle^{-4}$. On the other hand, if $a\in\Z^d\setminus\{0\}$ then
\begin{equation*}
|K'_a(t,k,s)|\lesssim |t-s|e^{-\delta|t-s|^{1/3}|k|^{1/3}/4}\lesssim \langle t-s\rangle^{-4}
\end{equation*}
if $(s-t)|k|\notin[t|a|/2,2t|a|]$, and
\begin{equation*}
\begin{split}
|K_a(t,k,s)|&\lesssim t|k|^{-1}e^{-\delta^4(t|k|^{-2})^{2/3}(1+t|k|^{-2})^{-1/3-\delta}/4}e^{-\delta|s|a|-(s-t)|k||^{1/3}}e^{-\delta |a|^{1/3}/4}\\
&\lesssim |k|e^{-\delta|s|a|-(s-t)|k||^{1/3}}e^{-\delta |a|^{1/3}/4}
\end{split}
\end{equation*}
if $(t-s)|k|\in[t|a|/2,2t|a|]$. The desired bounds \eqref{infin31} follow in this case as well.
\end{proof}

We can now combine these bounds with the identity \eqref{infin2} and the bounds \eqref{pam27.2} to prove our main estimates on the functions $\rho_1$, $\rho_2$, and $\rho_1-\rho_2$. The argument is identical to the argument in the proof of Proposition \ref{MainBootstrapIm1}.

\begin{proposition}\label{BootstrapInfin1} With the assumptions and notations of Proposition \ref{MainBootstrapInfin}, there is a constant $B_2=B_2(d,\lambda_0,\vartheta)\geq 1$ such that for any $t\in[T_1,T_2]$ and $i\in\{1,2\}$
\begin{equation}\label{infin35}
\mathcal{Z}_{g_i}^{\sharp,0}(t)\langle t\rangle^{6d}\leq B_2\overline{\theta}\qquad\text{ and }\qquad \mathcal{Z}_{g_1-g_2}^{\sharp,2}(t)\langle t\rangle^{6d}\leq B_2\theta_0.
\end{equation}
As a consequence, if $\overline{B}\geq B_2$ then there is a constant $C'_0=C'_0(d)$ such that
\begin{equation}\label{infin36}
\begin{split}
\big\|A^\sharp(t,k,tk)\widehat{\rho_i}(t,k)|k|^{-1/2}\|_{L^2_k}&\leq C'_0(\overline{B}^dB_2)^{1/(d+1)}\overline{\theta}\langle t\rangle^{-3},\\
\big\|\langle k,tk\rangle^{-2} A^\sharp(t,k,tk)(\widehat{\rho_1-\rho_2})(t,k)|k|^{-1/2}\|_{L^2_k}&\leq C'_0(\overline{B}^dB_2)^{1/(d+1)}\theta_0\langle t\rangle^{-3}
\end{split}
\end{equation}
for any $t\in[T_1,T_2]$ and $i\in\{1,2\}$.
\end{proposition}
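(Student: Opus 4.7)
The plan is to mirror the proof of Proposition \ref{MainBootstrapIm1}, substituting the backward-in-time representation formula \eqref{infin2} for \eqref{pam27.1} and the weights $A^\sharp$ for $A$. The three main inputs are Lemma \ref{LemInfin} (in place of Lemma \ref{LemN}), the Green's function decay bound \eqref{pam27.2}, and Lemma \ref{g*} together with the energy bootstrap \eqref{infin7}.

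For the pointwise bounds \eqref{infin35}, I would fix $t \in [T_1,T_2]$, $k \in \Z^d \setminus \{0\}$, and $i \in \{1,2\}$ and start from \eqref{infin2} to write
\begin{equation*}
\langle t\rangle^{6d} A^\sharp(t,k,tk) |k|^{-1/2} |\widehat{\rho_i}(t,k)| \leq \langle t\rangle^{6d} A^\sharp(t,k,tk) |k|^{-1/2} |\widehat{\mathcal{N}'_i}(t,k)| + J_i(t,k),
\end{equation*}
where
\begin{equation*}
J_i(t,k) := \int_t^{T_2} \big[\langle s\rangle^{6d} A^\sharp(s,k,sk) |k|^{-1/2} |\widehat{\mathcal{N}'_i}(s,k)|\big]\, |\widehat{G}(s-t,-k)|\, \frac{\langle t\rangle^{6d} A^\sharp(t,k,tk)}{\langle s\rangle^{6d} A^\sharp(s,k,sk)}\, ds.
\end{equation*}
Lemma \ref{LemInfin} controls the first term and the bracketed factor by $B_1\overline{\theta}$. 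Since $\lambda^\sharp$ is increasing in $t$, for $s \geq t$ we have $A^\sharp(t,k,tk) \leq A^\sharp(s,k,tk)$, and the bilinear estimate \eqref{gu6} applied to the decomposition $(k,tk) = (k,sk) + (0,(t-s)k)$ yields
\begin{equation*}
\frac{A^\sharp(t,k,tk)}{A^\sharp(s,k,sk)} \leq A^\sharp(s,0,(t-s)k) \leq e^{(\lambda_1+2\delta)\langle (t-s)k\rangle^{1/3}}.
\end{equation*}
Combined with \eqref{pam27.2} and the constraints $\lambda_1 \leq 0.9\lambda_0$, $\delta = \lambda_0/200$, the Green's function produces a residual decay $e^{-c\lambda_0 \langle (s-t)k\rangle^{1/3}}$ with $c \geq 0.04$, which is integrable in $s$ (using also $\langle t\rangle^{6d}/\langle s\rangle^{6d} \leq 1$ since $s \geq t$). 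This gives the first inequality in \eqref{infin35}; the second follows from the difference clause of Lemma \ref{LemInfin} by the same argument, carrying the extra $\langle k,tk\rangle^{-2}$ weight through.

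For the $L^2$ bounds \eqref{infin36}, I would split the sum over $k$ at a threshold $D := \langle t\rangle^{6}(\overline{B}/B_2)^{2/(d+1)}$, mimicking the end of the proof of Proposition \ref{MainBootstrapIm1}. On $\{|k| > D\}$, Lemma \ref{g*} applied at $\xi = tk$ together with \eqref{infin7} gives a contribution $\lesssim_d \overline{B}^2 \overline{\theta}^2 D^{-1}$; on $\{0 < |k| \leq D\}$, the pointwise bound \eqref{infin35} gives $\lesssim_d B_2^2 \overline{\theta}^2 \langle t\rangle^{-12d} D^d$. The choice of $D$ balances these two contributions and yields the claimed $\langle t\rangle^{-3}$ decay; the estimate for the difference is identical, with $\overline{\theta}$ replaced by $\theta_0$ and weights $\langle k,tk\rangle^{-2}$ inserted throughout.

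The main obstacle has already been absorbed into Lemma \ref{LemInfin}, whose hardest step is the backward-in-time convolution estimate \eqref{infin17}. Once that lemma is in hand, the present proposition is a routine extraction of pointwise and $L^2$ information from the Volterra-type representation \eqref{infin2}; the only subtlety beyond bookkeeping is correctly tracking the monotonicity reversal of $A^\sharp$ relative to $A$, which enters only through the trivial inequality $A^\sharp(t,\cdot) \leq A^\sharp(s,\cdot)$ for $t \leq s$ used above.
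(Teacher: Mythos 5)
Your proposal is correct and follows exactly the route the paper intends: the paper proves this proposition by declaring the argument identical to that of Proposition \ref{MainBootstrapIm1}, and your writeup is precisely that argument transposed to the backward setting, with \eqref{infin2} replacing \eqref{pam27.1}, Lemma \ref{LemInfin} replacing Lemma \ref{LemN}, and the monotonicity reversal of $A^\sharp$ handled correctly (the ratio $\langle t\rangle^{6d}/\langle s\rangle^{6d}\leq 1$ even simplifies matters). The constant bookkeeping in the $L^2$ splitting at $D=\langle t\rangle^{6}(\overline{B}/B_2)^{2/(d+1)}$ also checks out.
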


As before, we note that the $L^2$ bounds \eqref{infin36} are needed later, in the proof of the estimates \eqref{infin55} in Proposition \ref{BootstrapInfin4}.

Finally we prove our main improved energy estimates.

\begin{proposition}\label{BootstrapInfin4}
With the assumptions and notations of Proposition \ref{MainBootstrapInfin}, for any $t\in[T_1,T_2]$ and $i\in\{1,2\}$ we have
\begin{equation}\label{infin39}
\mathcal{E}^{\sharp,0}_{g_i}(t)\leq\overline{B}^2\overline{\theta}^2/4\qquad\text{ and }\qquad\mathcal{E}^{\sharp,2}_{g_1-g_2}(t)\leq\overline{B}^2\theta_0^2/4.
\end{equation}
\end{proposition}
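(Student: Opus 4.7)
The plan is to adapt the proof of Proposition \ref{MainBootstrapIm2} in Section \ref{ham1} to the backwards-in-time setting with the increasing weights $A^\sharp$. Following \eqref{ham2.1}, I would first introduce a mollification $A^\sharp_L(t,k,\xi):=(1+2^{-L}\langle k,\xi\rangle)^{-4}A^\sharp(t,k,\xi)$ and the corresponding energies $\mathcal{E}^{\sharp,p,L}_h(t)$, then use \eqref{NVP4} to compute
\begin{equation*}
\frac{d}{ds}\mathcal{E}^{\sharp,0,L}_{g_i}(s)=I^\sharp_i(s)+II^\sharp_i(s)+III^\sharp_i(s)+IV^\sharp_i(s),
\end{equation*}
where the four terms are the verbatim analogs of \eqref{ner2}--\eqref{ner5} with $A_L$ replaced by $A^\sharp_L$. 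The sign flip in \eqref{gu3.3} now yields $I^\sharp_i(s)\geq 0$, so integrating backwards from $T_2$ to $t$ and using hypothesis \eqref{infin6} gives
\begin{equation*}
\mathcal{E}^{\sharp,0,L}_{g_i}(t)\leq\mathcal{E}^{\sharp,0}_{g_i}(T_2)+\int_t^{T_2}\bigl[|II^\sharp_i(s)|+|IV^\sharp_i(s)|\bigr]\,ds+\int_t^{T_2}\bigl[|III^\sharp_i(s)|-I^\sharp_i(s)/2\bigr]\,ds,
\end{equation*}
so $-I^\sharp_i(s)/2\leq 0$ plays the role of the coercive term that was $I_i/2$ in the forward case.

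For the analog of Lemma \ref{Lem1ner} bounding $II^\sharp_i$, I would use \eqref{gu6} to dominate $A^\sharp(s,k,\xi)/A^\sharp(s,k,sk)$ by $A^\sharp(s,0,\xi-sk)\leq e^{(\lambda_1+2\delta)\langle\xi-sk\rangle^{1/3}}$, then invoke \eqref{M01} (which controls $e^{0.95\lambda_0\langle\eta\rangle^{1/3}}M_0^\ast(\eta)$ in $L^2_\eta$), Cauchy--Schwarz in $\xi$, the bootstrap assumption \eqref{infin7}, and the $L^2_k$ bound \eqref{infin36} on $A^\sharp(s,k,sk)\widehat{\rho_i}(s,k)|k|^{-1/2}$, to produce $|II^\sharp_i(s)|\lesssim_{d,\lambda_0,\vartheta}\overline{B}(\overline{B}^dB_2)^{1/(d+1)}\overline\theta^2\langle s\rangle^{-3}$. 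This is time-integrable and yields $\overline{B}^2\overline\theta^2/8$ once $\overline{B}$ is chosen large relative to $B_2$. The analog of Lemma \ref{Lem2ner} for $IV^\sharp_i$ goes through essentially verbatim: the product estimate \eqref{gu6} for $A^\sharp$ replaces the ratio $A^\sharp(s,k,\xi)/[A^\sharp(s,l,sl)A^\sharp(s,k-l,\xi-sl)]$ by a decaying factor $e^{-8\delta\min(\langle l,sl\rangle,\langle k-l,\xi-sl\rangle)^{1/3}}$, and Cauchy--Schwarz combined with \eqref{infin7} and \eqref{infin36} closes the estimate without needing the coercive term.

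The main obstacle, as in Lemma \ref{Lem3ner}, is the term $III^\sharp_i$, where the factor $|\xi-sk|$ produces genuine derivative loss. The plan is to reproduce the symmetrization trick: since $\rho_i$ is real-valued, $\overline{\widehat{\rho_i}(s,l)}=\widehat{\rho_i}(s,-l)$, so the part of the integrand invariant under $(k,\xi)\leftrightarrow(k-l,\xi-sl)$ vanishes after taking real parts, leaving a commutator
\begin{equation*}
\frac{A^\sharp_L(s,k,\xi)}{\langle k,\xi\rangle^2}-\frac{A^\sharp_L(s,k-l,\xi-sl)}{\langle k-l,\xi-sl\rangle^2}.
\end{equation*}
In the regime $|l,sl|\leq|k-l,\xi-sl|/10$, the estimate \eqref{gu6.5} for $A^\sharp$ provides a gain of $\langle k,\xi\rangle^{-2/3}$, which combines with $|\xi-sk|\lesssim\langle s\rangle\langle k,\xi\rangle$ to leave only $\langle k,\xi\rangle^{1/3}$-growth accompanied by an $l$-summable factor $e^{-\delta\langle l,sl\rangle^{1/3}}$; in the opposite regime the direct product bound \eqref{gu6} yields rapid Gevrey decay in $|k-l,\xi-sl|$, and integrability is automatic. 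The remaining $\langle k,\xi\rangle^{1/3}$-growth is absorbed by the lower bound
\begin{equation*}
I^\sharp_i(s)\gtrsim(1+s)^{-\delta-1}\sum_{|a|\leq d'}\sum_k\int_{\R^d}A^\sharp_L(s,k,\xi)^2\langle k,\xi\rangle^{1/3}|D^a_\xi\widehat{g_i}(s,k,\xi)|^2\,d\xi,
\end{equation*}
which follows from the first term of \eqref{gu3.3}; Cauchy--Schwarz in $(k,\xi)$ together with \eqref{infin7} then gives $|III^\sharp_i(s)|\leq I^\sharp_i(s)/2+O(\overline\theta^2\langle s\rangle^{-2})$ once $\overline\theta$ is small. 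The bi-Lipschitz version controlling $\mathcal{E}^{\sharp,2}_{g_1-g_2}$ is handled in the same way, inserting $\langle k,\xi\rangle^{-2}$ throughout and decomposing $\widehat{\rho_1}\widehat{g_1}-\widehat{\rho_2}\widehat{g_2}=\widehat{\delta\rho}\cdot\widehat{g_1}+\widehat{\rho_2}\cdot\widehat{\delta g}$ exactly as in \eqref{ner4.3}--\eqref{ner5.3}, with \eqref{infin36} in place of \eqref{pam21}. Letting $L\to\infty$ completes the proof of \eqref{infin39}.
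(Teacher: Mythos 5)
Your proposal follows essentially the same route as the paper: the same mollification $A^\sharp_L$, the same four-term decomposition of $\frac{d}{ds}\mathcal{E}^{\sharp,p,L}$, the observation that $\dot{A^\sharp_L}\geq 0$ makes $I^\sharp$ coercive when integrating backwards from $T_2$, and the transfer of the arguments of Lemmas \ref{Lem1ner}--\ref{Lem3ner} (including the symmetrization of $III^\sharp$ and the absorption of the residual $\langle k,\xi\rangle^{1/3}$-growth into $I^\sharp$) using \eqref{ner9.5}, \eqref{infin36}, \eqref{gu6}, and \eqref{gu6.5}. The paper's own proof is exactly this adaptation, stated more tersely, so your write-up is correct and in fact supplies more detail than the original.
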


\begin{proof} This is similar to the proof of Proposition \ref{MainBootstrapIm2}. As before, to justify the calculations we define the mollified energies
\begin{equation*}
\begin{split}
&\mathcal{E}^{\sharp,p,L}_h(t):=\sum_{k\in \mathbb{Z}^d}\int_{\R^d}\langle k,\xi\rangle^{-2p}A^\sharp_L(t,k,\xi)^2\Big\{\sum_{|a|\leq d'}\big|D^a_\xi\widehat{h}(t,k,\xi)\big|^2\Big\}\,d\xi,\\
&A^\sharp_L(t,k,\xi):=(1+2^{-L}\langle k,\xi\rangle)^{-4}A^\sharp(t,k,\xi),
\end{split}
\end{equation*}
where $p\in[0,2]$, $L\geq 4$, and $h\in X^\sharp_{[T_1,T_2]}$. Then we calculate
\begin{equation}\label{infin40}
\frac{d}{dt}\mathcal{E}^{\sharp,0,L}_{g_i}(t)=I^\sharp_i(t)+II^\sharp_i(t)+III^\sharp_i(t)+IV^\sharp_i(t),
\end{equation}
where, with $\dot{A_L^\sharp}=\partial_tA_L^\sharp$,
\begin{equation}\label{infin41}
I^\sharp_i(t):=2\sum_{|a|\leq d'}\sum_{k\in \mathbb{Z}^d}\int_{\R^d}A_L^\sharp(t,k,\xi)\dot{A}_L^\sharp(t,k,\xi)\big|D^a_\xi\widehat{g_i}(t,k,\xi)\big|^2\,d\xi,
\end{equation}
\begin{equation}\label{infin42}
\begin{split}
II^\sharp_i(t)&:=-2\Re\Big\{\sum_{|a|\leq d'}\sum_{k\in \mathbb{Z}^d\setminus\{0\}}\int_{\R^d}A_L^\sharp(t,k,\xi)^2\overline{D^a_\xi\widehat{g_i}(t,k,\xi)}\\
&\qquad\qquad\times D^a_\xi\Big[\widehat{\rho_i}(t,k)\widehat{M_0}(\xi-tk)\frac{k\cdot (\xi-tk)}{|k|^2}\Big]\,d\xi\Big\},
\end{split}
\end{equation}
\begin{equation}\label{infin43}
\begin{split}
III^\sharp_i(t)&:=-\frac{2}{(2\pi)^d}\Re\Big\{\sum_{|a|\leq d'}\sum_{k\in \mathbb{Z}^d}\int_{\R^d}A_L^\sharp(t,k,\xi)^2\overline{D^a_\xi\widehat{g_i}(t,k,\xi)}\\
&\qquad\qquad\qquad\times \Big[\sum_{l\in\Z^d\setminus\{0\}}\widehat{\rho_i}(t,l)D^a_\xi\widehat{g_i}(t,k-l,\xi-tl)\frac{l\cdot(\xi-tk)}{|l|^2}\Big]\,d\xi\Big\},
\end{split}
\end{equation}
\begin{equation}\label{infin44}
\begin{split}
IV^\sharp_i(t)&:=-\frac{2}{(2\pi)^d}\Re\Big\{\sum_{|a|\leq d'}\sum_{|a'|=1,\,a'\leq a}\sum_{k\in \mathbb{Z}^d}\int_{\R^d}A_L^\sharp(t,k,\xi)^2\overline{D^a_\xi\widehat{g_i}(t,k,\xi)}\\
&\qquad\qquad\qquad\times \Big[\sum_{l\in\Z^d\setminus\{0\}}\widehat{\rho_i}(t,l)(D^{a-a'}_\xi\widehat{g_i})(t,k-l,\xi-tl)\frac{D^{a'}_\xi[l\cdot(\xi-tk)]}{|l|^2}\Big]\,d\xi\Big\}.
\end{split}
\end{equation}
Notice that $I^\sharp_i(t)\geq 0$, since $\dot{A^\sharp}(t,k,\xi)\geq 0$. Using \eqref{infin40} we can therefore estimate
\begin{equation}\label{infin45}
\begin{split}
&\mathcal{E}^{\sharp,0,L}_{g_i}(t)=\mathcal{E}^{\sharp,0,L}_{g_i}(T_2)-\int_t^{T_2}\big[I^\sharp_i(s)+II^\sharp_i(s)+III^\sharp_i(s)+IV^\sharp_i(s)\big]\,ds\\
&\leq\mathcal{E}^{\sharp,0}_{g_i}(T_2)+\int_t^{T_2}|II^\sharp_i(s)|\,ds+\int_t^{T_2}|IV^\sharp_i(s)|\,ds+\int_t^{T_2}\big[|III^\sharp_i(s)|-|I^\sharp_i(s)|\big]\,ds.
\end{split}
\end{equation}

Similarly, we calculate
\begin{equation}\label{infin46}
\frac{d}{dt}\mathcal{E}^{\sharp,2,L}_{g_1-g_2}(t)=\delta I^\sharp(t)+\delta II^\sharp(t)+\delta III^\sharp(t)+\delta IV^\sharp(t),
\end{equation}
where
\begin{equation}\label{infin47}
\delta I^\sharp(t):=2\sum_{|a|\leq d'}\sum_{k\in \mathbb{Z}^d}\int_{\R^d}\langle k,\xi\rangle^{-4}A_L^\sharp(t,k,\xi)\dot{A_L^\sharp}(t,k,\xi)\big|D^a_\xi(\widehat{g_1}-\widehat{g_2})(t,k,\xi)\big|^2\,d\xi,
\end{equation}
\begin{equation}\label{infin48}
\begin{split}
\delta II^\sharp(t)&:=-2\Re\Big\{\sum_{|a|\leq d'}\sum_{k\in \mathbb{Z}^d\setminus\{0\}}\int_{\R^d}\langle k,\xi\rangle^{-4}A_L^\sharp(t,k,\xi)^2\overline{D^a_\xi(\widehat{g_1}-\widehat{g_2})(t,k,\xi)}\\
&\qquad\qquad\times D^a_\xi\Big[(\widehat{\rho_1}-\widehat{\rho_2})(t,k)\widehat{M_0}(\xi-tk)\frac{k\cdot (\xi-tk)}{|k|^2}\Big]\,d\xi\Big\},
\end{split}
\end{equation}
\begin{equation}\label{infin49}
\begin{split}
&\delta III^\sharp(t):=-\frac{2}{(2\pi)^d}\Re\Big\{\sum_{|a|\leq d'}\sum_{k\in \mathbb{Z}^d}\int_{\R^d}\langle k,\xi\rangle^{-4}A_L^\sharp(t,k,\xi)^2\overline{D^a_\xi(\widehat{g_1}-\widehat{g_2})(t,k,\xi)}\\
&\quad\times \Big[\sum_{l\in\Z^d\setminus\{0\}}[\widehat{\rho_1}(t,l)D^a_\xi\widehat{g_1}(t,k-l,\xi-tl)-\widehat{\rho_2}(t,l)D^a_\xi\widehat{g_2}(t,k-l,\xi-tl)]\frac{l\cdot(\xi-tk)}{|l|^2}\Big]\,d\xi\Big\},
\end{split}
\end{equation}
\begin{equation}\label{infin50}
\begin{split}
&\delta IV^\sharp(t):=-\frac{2}{(2\pi)^d}\Re\Big\{\sum_{|a|\leq d'}\sum_{|a'|=1,\,a'\leq a}\sum_{k\in \mathbb{Z}^d}\int_{\R^d}\langle k,\xi\rangle^{-4}A_L^\sharp(t,k,\xi)^2\overline{D^a_\xi(\widehat{g_1}-\widehat{g_2})(t,k,\xi)}\\
&\quad\times \Big[\sum_{l\in\Z^d\setminus\{0\}}[\widehat{\rho_1}(t,l)(D^{a-a'}_\xi\widehat{g_1})(t,k-l,\xi-tl)-\widehat{\rho_2}(t,l)(D^{a-a'}_\xi\widehat{g_2})(t,k-l,\xi-tl)]\\
&\qquad\qquad\qquad\times\frac{D^{a'}_\xi[l\cdot(\xi-tk)]}{|l|^2}\Big]\,d\xi\Big\}.
\end{split}
\end{equation}
As before, we notice that $\delta I^\sharp(t)\geq 0$, so
\begin{equation}\label{infin51}
\begin{split}
\mathcal{E}^{\sharp,2,L}_{g_1-g_2}(t)&\leq\mathcal{E}^{\sharp,2}_{g_1-g_2}(T_2)+\int_t^{T_2}|\delta II^\sharp(s)|\,ds+\int_t^{T_2}|\delta IV^\sharp(s)|\,ds\\
&+\int_t^{T_2}\big[|\delta III^\sharp(s)|-|\delta I^\sharp(s)|\big]\,ds.
\end{split}
\end{equation}

As in the proof of Lemma \ref{Lem1ner}, we can use the bounds \eqref{ner9.5} and \eqref{infin36} to show that 
\begin{equation}\label{infin55}
\int_t^{T_2}\big|II^\sharp_i(s)\big|\,ds\leq\overline{B}^2\overline{\theta}^2/8\qquad\text{ and }\qquad \int_t^{T_2}\big|\delta II^\sharp(s)\big|\,ds\leq\overline{B}^2\theta_0^2/8,
\end{equation}
for any $t\in[T_1,T_2]$ and $i\in\{1,2\}$. Moreover, as in the proofs of Lemmas \ref{Lem2ner} and \ref{Lem3ner}, we can use the bounds \eqref{gu6} and \eqref{gu6.5} (which are the same for the weights $A$ and $A^\sharp$) to show that
\begin{equation*}
\int_t^{T_2}\big|IV^\sharp_i(s)\big|\,ds\leq\overline{\theta}^2\qquad\text{ and }\qquad \int_t^{T_2}\big|\delta IV^\sharp(s)\big|\,ds\leq\theta_0^2,
\end{equation*}
\begin{equation*}
\int_t^{T_2}\big[|III_i^\sharp(s)|-|I_i^\sharp(s)|\big]\,ds\leq\overline{\theta}^2\qquad\text{ and }\qquad \int_t^{T_2}\big[|\delta III^\sharp(s)|-|\delta I^\sharp(s)|\big]\,ds\leq\theta_0^2.
\end{equation*}
The desired bounds \eqref{infin39} follow from these last three estimates.
\end{proof}

\section{Proofs of the main theorems}\label{proofTHM}

We can now prove our three main theorems.

\subsection{Proof of Theorem \ref{MainThm}}  We use the following simple continuation criterion:

\begin{lemma}\label{LemmaProo}
Assume $T_1\leq T_2\in [0,\infty)$, $\lambda_1\in[0.6\lambda_0,0.9\lambda_0]$, and $g\in X_{[T_1,T_2]}$ is a real-valued solution of the system \eqref{NVP3} satisfying $\widehat{g}(t,0,0)=0$ for any $t\in[T_1,T_2]$. Assume that
\begin{equation}\label{proo3}
\sup_{t\in[T_1,T_2]}\mathcal{E}^0_g(t)\leq\rho_0^2,
\end{equation}
where $\rho_0=\rho_0(d,\lambda_0,\vartheta)$ is sufficiently small. Then $g$ can be extended to a real-valued solution $g'\in X_{[T_1,T'_2]}$ of the system \eqref{NVP3}, for $T'_2=T_2+c_{d,\lambda_0,\vartheta}(1+T_2)^{-2}$, satisfying
\begin{equation}\label{proo5}
\sup_{t\in[T_1,T'_2]}\mathcal{E}^0_g(t)\leq 2\rho_0^2.
\end{equation}
\end{lemma}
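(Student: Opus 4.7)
My plan is to combine a short-time local existence argument with an a priori energy estimate that controls $\mathcal{E}^0_g$ on a short interval past $T_2$.

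\textbf{Construction of the extension.} I would first extend $g$ to a slightly larger interval $[T_1, T_2 + \tau]$ by a Picard iteration applied to the system \eqref{NVP3}. Setting $g^{(0)}(t) \equiv g(T_2)$ for $t \in [T_2, T_2 + \tau]$, define $g^{(n+1)}$ via the mild formulation
\begin{equation*}
g^{(n+1)}(t) = g(T_2) - \int_{T_2}^{t} \big\{ E^{(n)}(s,x+sv) \cdot \nabla_v M_0 + E^{(n)}(s,x+sv)\cdot(\nabla_v - s\nabla_x)g^{(n)}(s)\big\}\,ds,
\end{equation*}
where $E^{(n)}$ is the electric field generated by $\rho^{(n)}$. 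Using the bilinear weight bounds \eqref{gu6}--\eqref{gu6.5} applied crudely (without any appeal to $Z$-norm decay), the iterates stay uniformly bounded in $X_{[T_2, T_2+\tau]}$ and form a Cauchy sequence in a slightly weaker norm, for $\tau > 0$ small enough (possibly depending on $T_2$ and $\rho_0$). The resulting limit $g'$, glued to $g$ at $T_2$, extends $g$ to a real-valued solution in $X_{[T_1, T_2+\tau]}$ of the system \eqref{NVP3}, preserving neutrality.

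\textbf{Energy inequality without $Z$-norm decay.} Next I would apply the identity \eqref{ner1}--\eqref{ner5} to $g'$ on $[T_2, T_2+\tau]$ and estimate $II_i$, $III_i$, $IV_i$ without the $Z$-norm decay of Proposition \ref{MainBootstrapIm1} (which relies on the full bootstrap setup of Proposition \ref{MainBootstrap}). Instead, by Lemma \ref{g*} we have the uniform-in-time bound
\begin{equation*}
\|A(s,k,sk)\widehat{\rho}(s,k)\|_{L^2_k} \leq C_0\big(\mathcal{E}^0_{g'}(s)\big)^{1/2}.
\end{equation*}
The factor $|\xi-sk|$ in $III_i$ is absorbed via $|\xi-sk| \leq \langle s\rangle \langle k,\xi\rangle$, trading the polynomial $\langle k,\xi\rangle$ for a negligible piece of the Gevrey weight; $II_i$ is handled using the decay of $\widehat{M_0}$ from \eqref{M01} as in Lemma \ref{Lem1ner}; and $IV_i$ does not produce any $t$-loss since the derivative of $l \cdot (\xi-sk)$ in $\xi$ is just $l$. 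Combining with \eqref{gu6}, one obtains a differential inequality of the form
\begin{equation*}
\frac{d}{ds}\mathcal{E}^{0,L}_{g'}(s) \leq C^\ast \langle s\rangle^{2}\big(\mathcal{E}^0_{g'}(s)\big)^{3/2}
\end{equation*}
for a structural constant $C^\ast = C^\ast(d,\lambda_0,\vartheta)$; the exponent $2$ is not sharp but is more than sufficient.

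\textbf{Closing the estimate.} Setting $F := (\mathcal{E}^0_{g'})^{-1/2}$ and integrating, we have
\begin{equation*}
F(t) \geq F(T_2) - \tfrac{C^\ast}{6}\big[\langle t\rangle^3 - \langle T_2\rangle^3\big].
\end{equation*}
Starting from $F(T_2) \geq \rho_0^{-1}$, the inequality $\mathcal{E}^0_{g'}(t) \leq 2\rho_0^2$ persists as long as $\langle t\rangle^3 - \langle T_2\rangle^3 \leq c_0 \rho_0^{-1}$, which for $t$ close to $T_2$ amounts to $t - T_2 \leq c'_{d,\lambda_0,\vartheta}\rho_0^{-1} (1+T_2)^{-2}$; as $\rho_0$ is assumed small (so $\rho_0^{-1} \geq 1$), this covers any $t \in [T_2, T_2 + c_{d,\lambda_0,\vartheta}(1+T_2)^{-2}]$. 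Taking $T'_2 := T_2 + c_{d,\lambda_0,\vartheta}(1+T_2)^{-2}$ (iterating Step 1 if necessary to reach $T'_2$) and letting $L \to \infty$ yields the claim \eqref{proo5}. The main obstacle is the local existence step: constructing the iterates directly in the weighted Gevrey space $X_{[T_2, T_2+\tau]}$ requires a careful linear estimate on the transport term $(\nabla_v - s\nabla_x)$ that respects the time-dependent weight $A$, which again rests on the bilinear bounds \eqref{gu6}--\eqref{gu6.5}.
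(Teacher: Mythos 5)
Your overall architecture (short-time construction plus an energy estimate giving an interval of length $\sim(1+T_2)^{-2}$) matches the paper's, but there are two genuine gaps, both traceable to the same source: the transport term $E\cdot(\nabla_v-t\nabla_x)g$ loses a full derivative. In Fourier variables the nonlinearity carries the factor $l\cdot(\xi-tk)/|l|^2$ with $|\xi-tk|\lesssim\langle t\rangle\langle k-l,\xi-tl\rangle$, i.e.\ the loss sits on the \emph{high}-frequency factor $\widehat{g}$, so the exponential gain in \eqref{gu6} (which is in the \emph{minimum} of the two frequencies) cannot absorb it. Consequently your claim that the iterates of a direct Picard scheme ``stay uniformly bounded in $X_{[T_2,T_2+\tau]}$'' does not follow from \eqref{gu6}--\eqref{gu6.5}: the Duhamel map sends $X$ outside of $X$, and the symmetrization that rescues the a priori estimate (Lemma \ref{Lem3ner}) is unavailable for the iteration because it requires the \emph{same} function in both slots of the quadratic form. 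A Cauchy--Kovalevskaya-type shrinking of the Gevrey radius does not repair this either, since absorbing one full derivative in a Gevrey-$3$ weight costs a non-integrable factor $(\lambda(s)-\lambda(t))^{-3}$. This is precisely why the paper regularizes the equation with $\varepsilon(|k|^2+|\xi|^2)\widehat{g^\varepsilon}$, solves the regularized problem by a fixed point using the smoothing of $e^{-\varepsilon(|k|^2+|\xi|^2)(t-s)}$, proves the energy bound uniformly in $\varepsilon$ on an interval of length $c(1+T_2)^{-2}$, and then passes to the limit $\varepsilon\to0$ via an unweighted $L^2$ contraction estimate. You flag the local-existence step as ``the main obstacle'' but do not resolve it.

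The second gap is in your differential inequality. You propose to absorb $|\xi-sk|\leq\langle s\rangle\langle k,\xi\rangle$ in $III_i$ by ``trading the polynomial $\langle k,\xi\rangle$ for a negligible piece of the Gevrey weight''; at a fixed time $s$ there is no such room --- $\|\langle k,\xi\rangle^{1/2} A(s)\widehat g\|_{L^2}^2$ is not controlled by $\mathcal{E}^0_g(s)$. The correct route, as in Lemma \ref{Lem3ner}, is to symmetrize so that the leading (derivative-losing) part of $III_i$ cancels exactly by antisymmetry and reality of $\rho$, and to absorb the remaining commutator (which by \eqref{gu6.5} costs only $\langle k,\xi\rangle^{1/3}$) into the negative term $I_i(s)\leq 0$ produced by the decreasing weight, via \eqref{gu3.3}. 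After that one lands on the linear Gronwall inequality $\frac{d}{dt}\mathcal{E}^{0}_{g^\varepsilon}(t)\lesssim(1+T_2)^2\,\mathcal{E}^{0}_{g^\varepsilon}(t)$ used in the paper (your $(\mathcal{E}^0)^{3/2}$ right-hand side is harmless once $\rho_0$ is small, but it must come \emph{after} the symmetrization, not instead of it). With these two repairs your argument would coincide with the paper's.
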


\begin{proof} This is a local well-posedness result and we can use a standard parabolic regularization argument to prove it. We provide some details for the sake of completeness.

For $\varepsilon>0$ we are looking for solutions $g^\varepsilon\in X_{[T_2,T_2^\varepsilon]}$ of the equation (in the Fourier space)
\begin{equation}\label{proo2}
\begin{split}
&\partial_t\widehat{g^\varepsilon}(t,k,\xi)+\varepsilon(|k|^2+|\xi|^2)\widehat{g^\varepsilon}(t,k,\xi)+\widehat{g^\varepsilon}(t,k,tk)\widehat{M_0}(\xi-tk)\frac{k\cdot (\xi-tk)}{|k|^2}\\
&\qquad+\frac{1}{(2\pi)^d}\sum_{l\in\Z^d\setminus\{0\}}\widehat{g^\varepsilon}(t,l,tl)\widehat{g^\varepsilon}(t,k-l,\xi-tl)\frac{l\cdot(\xi-tk)}{|l|^2}=0,
\end{split}
\end{equation}
with initial data $\widehat{g^\varepsilon}(T_2)=\widehat{g}(T_2)$ (compare with \eqref{NVP4}). Such solutions can be constructed by a fixed-point argument in the space $X_{[T_2,T_2^\varepsilon]}$, using the identity
\begin{equation}\label{proo9}
\widehat{g^\varepsilon}(t,k,\xi)=e^{-\varepsilon(|k|^2+|\xi|^2)(t-T_2)}\widehat{g}(T_2,k,\xi)-\sum_{i\in\{1,2\}}\int_{T_2}^te^{-\varepsilon(|k|^2+|\xi|^2)(t-s)}\mathcal{M}_i(\widehat{g^\varepsilon})(s,k,\xi)\,ds,
\end{equation}
where $T_2^\varepsilon-T_2$ is sufficiently small (depending on $\varepsilon, T_2, d, \lambda_0, \vartheta$) and
\begin{equation}\label{proo8}
\begin{split}
&\mathcal{M}_1(f)(t,k,\xi):=f(t,k,tk)\widehat{M_0}(\xi-tk)\frac{k\cdot (\xi-tk)}{|k|^2},\\
&\mathcal{M}_2(f)(t,k,\xi):=\frac{1}{(2\pi)^d}\sum_{l\in\Z^d\setminus\{0\}}f(t,l,tl)f(t,k-l,\xi-tl)\frac{l\cdot(\xi-tk)}{|l|^2}.
\end{split}
\end{equation}
We may assume that the solutions $g^\varepsilon$ are real-valued and $\mathcal{E}^0_{g^\varepsilon}(t)\leq 1.5\rho_0^2$ for any $t\in[T_2,T_2^\varepsilon]$. As in the proof of Proposition \ref{MainBootstrapIm2}, we can estimate the energy increment by
\begin{equation*}
\frac{d}{dt}\mathcal{E}_{g^\varepsilon}^0(t)\lesssim_{d,\lambda_0,\vartheta} (1+T_2)^2\mathcal{E}_{g^\varepsilon}^0(t).
\end{equation*}
Therefore the solution $g^\varepsilon$ of \eqref{proo2} can be extended on the time interval $[T_2,T'_2]$, $T'_2:=T_2+c_{d,\lambda_0,\vartheta}(1+T_2)^{-2}$, satisfying the uniform bounds $\mathcal{E}^0_{g^\varepsilon}(t)\leq 1.5\rho_0^2$ for any $t\in[T_2,T'_2]$.

We can now let $\varepsilon\to 0$ to construct the desired extension $g$. Indeed, let
\begin{equation*}
\mathcal{L}_{\varepsilon,\varepsilon'}(t):=\sum_{|a|\leq d'}\sum_{k\in \mathbb{Z}^d}\int_{\R^d}\big|D^a_\xi\widehat{(g^\varepsilon-g^{\varepsilon'})}(t,k,\xi)\big|^2\,d\xi.
\end{equation*}
As in the proof of Proposition \ref{MainBootstrapIm2}, we can estimate
\begin{equation*}
|\partial_t\mathcal{L}_{\varepsilon,\varepsilon'}(t)|\lesssim_{d,\lambda_0,\vartheta}(\varepsilon+\varepsilon')[\mathcal{L}_{\varepsilon,\varepsilon'}(t)]^{1/2}+\mathcal{L}_{\varepsilon,\varepsilon'}(t),
\end{equation*}
for any $t\in[T_2,T'_2]$. Since $\mathcal{L}_{\varepsilon,\varepsilon'}(T_2)=0$ it follows that $\lim_{\varepsilon,\varepsilon'\to 0}\|\mathcal{L}_{\varepsilon,\varepsilon'}(t)\|_{L^\infty_t}=0$. Therefore the sequence $g^\varepsilon$ converges to a solution $g\in X_{[T_1,T'_2]}$ of the equation \eqref{NVP4}, which satisfies the uniform bounds \eqref{proo5}, as desired.
\end{proof}

\begin{proof}[Proof of Theorem \ref{MainThm}] First, we combine Lemma \ref{LemmaProo} and the bootstrap Proposition \ref{MainBootstrap} (both with $\lambda_1=0.9\lambda_0$) to construct a unique solution $g\in C([0,\infty):\mathcal{G}_{d'}^{\lambda_0/2,1/3}(\T^d\times\R^d))$ of the equation \eqref{NVP3} with initial data $g(0)=f_0$, satisfying 
\begin{equation}\label{proo30}
\sum_{|a|\leq d'}\|D^a_\xi\widehat{g}(t,k,\xi)\cdot A(t,k,\xi)\|_{L^2_{k,\xi}}\lesssim\kappa_0,\qquad \int_{\T^d\times\R^d} g(t,x,v)\,dxdv=0,
\end{equation} 
for any $t\in[0,\infty)$, where the implied constants in this proof are allowed to depend only on $d,\lambda_0,\vartheta$. In particular, using Lemma \ref{g*}, for any $t\in[0,\infty)$ we have
\begin{equation}\label{proo31}
\|\widehat{\rho}(t,k)\cdot A(t,k,tk)\|_{L^2_k}\lesssim \kappa_0.
\end{equation}
Notice that $A(t,k,tk)\geq e^{0.9\lambda_0\langle k,tk\rangle^{1/3}}\geq e^{0.6\lambda_0\langle k\rangle^{1/3}}e^{0.3\lambda_0\langle t\rangle^{1/3}}$ if $k\in\Z^d\setminus \{0\}$. Thus, using \eqref{proo31},
\begin{equation}\label{proo32}
\|\rho(t)\|_{\mathcal{G}^{0.6\lambda_0,1/3}(\T^d)}\lesssim \kappa_0e^{-0.3\lambda_0\langle t\rangle^{1/3}}.
\end{equation}

Moreover, using the identity \eqref{NVP4}, for any $t\leq t'\in[0,\infty)$ we have
\begin{equation}\label{proo33}
\begin{split}
\widehat{g}(t',k,\xi)-\widehat{g}(t,k,\xi)&+\int_{t}^{t'}\widehat{\rho}(s,k)\widehat{M_0}(\xi-sk)\frac{k\cdot (\xi-sk)}{|k|^2}\,ds\\
&+\frac{1}{(2\pi)^d}\sum_{l\in\Z^d}\int_{t}^{t'}\widehat{\rho}(s,l)\widehat{g}(s,k-l,\xi-sl)\frac{l\cdot(\xi-sk)}{|l|^2}\,ds=0.
\end{split}
\end{equation}
Therefore, recalling the definitions \eqref{pam3}, \eqref{ner8}, and \eqref{ner10}, 
\begin{equation}\label{proo33.5}
\begin{split}
\sum_{|a|\leq d}&e^{0.6\lambda_0\langle k,\xi\rangle^{1/3}}\big|D^a_\xi[\widehat{g}(t',k,\xi)-\widehat{g}(t,k,\xi)]\big|\lesssim \int_{t}^{t'}|\widehat{A\rho}(s,k)|M_0^\ast(\xi-sk)\frac{e^{0.6\lambda_0\langle k,\xi\rangle^{1/3}}}{A(s,k,sk)}\,ds\\
&+\sum_{|a|\leq d}\sum_{l\in\Z^d\setminus\{0\}}\int_{t}^{t'}|\widehat{A\rho}(s,l)||\widehat{Ag_{a}}(s,k-l,\xi-sl)|\frac{|\xi-sk|e^{0.6\lambda_0\langle k,\xi\rangle^{1/3}}}{A(s,l,sl)A(s,k-l,\xi-sl)}\,ds.
\end{split}
\end{equation}
Notice that
\begin{equation}\label{proo34}
\begin{split}
\frac{e^{0.6\lambda_0\langle k,\xi\rangle^{1/3}}}{A(s,k,sk)}&\lesssim \frac{e^{0.6\lambda_0\langle k,sk\rangle^{1/3}}e^{0.6\lambda_0\langle\xi-sk\rangle^{1/3}}}{A(s,k,sk)}\lesssim \frac{e^{0.6\lambda_0\langle\xi-sk\rangle^{1/3}}}{e^{0.3\lambda_0\langle k,sk\rangle^{1/3}}},\\
\frac{|\xi-sk|e^{0.6\lambda_0\langle k,\xi\rangle^{1/3}}}{A(s,l,sl)A(s,k-l,\xi-sl)}&\lesssim \frac{|\xi-sk|e^{0.6\lambda_0\langle l,sl\rangle^{1/3}}e^{0.6\lambda_0\langle k-l,\xi-sl\rangle^{1/3}}}{A(s,l,sl)A(s,k-l,\xi-sl)}\lesssim\frac{1}{e^{0.29\lambda_0\langle l,sl\rangle^{1/3}}}.
\end{split}
\end{equation}
Notice that $\|\widehat{Ag_a}(s,k,\xi)\|_{L^2_{k,\xi}}+\|\widehat{A\rho}(s,k)\|_{L^2_{k}}\lesssim \kappa_0$, due to \eqref{proo30}--\eqref{proo31}. Recalling also the bounds \eqref{ner9.5}, it follows from the last two estimates that
\begin{equation*}
\begin{split}
\sum_{|a|\leq d}\big\|e^{0.6\lambda_0\langle k,\xi\rangle^{1/3}}D^a_\xi[\widehat{g}(t',k,\xi)-\widehat{g}(t,k,\xi)]\big\|_{L^2_{k,\xi}}\lesssim \int_{t}^{t'}\kappa_0e^{-0.29\lambda_0\langle s\rangle^{1/3}}\,ds\lesssim \kappa_0e^{-0.28\lambda_0\langle t\rangle^{1/3}}.
\end{split}
\end{equation*}
Therefore, if $t\leq t'\in[0,\infty)$, 
\begin{equation}\label{proo35}
\|g(t')-g(t)\|_{\mathcal{G}_{d'}^{0.6\lambda_0,1/3}}\lesssim \kappa_0e^{-0.28\lambda_0\langle t\rangle^{1/3}}.
\end{equation}

In particular, we can define $g_\infty:=\lim_{t\to\infty}g(t)$ in $\mathcal{G}_{d'}^{0.6\lambda_0,1/3}$, and the desired bounds \eqref{MainTHM2} follow from \eqref{proo32} and \eqref{proo35}. In fact $\|g_\infty\|_{\mathcal{G}_{d'}^{0.8\lambda_0,1/3}}\lesssim\kappa_0$, using again the uniforms bounds \eqref{proo30}. This completes the proof of Theorem \ref{MainThm} (i). 

To prove part (ii), assume $f_{01},f_{02}\in B_{\overline{\kappa}}(\mathcal{G}_{d'}^{\lambda_0,1/3})$ are initial data and let $g_1,g_2\in C([0,\infty):\mathcal{G}_{d'}^{\lambda_0/2,1/3})$ denote the corresponding solutions of the system \eqref{NVP3}. We first set $\lambda_1=0.9\lambda_0$ and notice that the function $t\mapsto \mathcal{E}^2_{g_1-g_2}(t)$ is a continuous map from $[0,\infty)$ to $[0,1]$ (since $\mathcal{E}^0_{g_i}(t)\lesssim\overline{\kappa}^2$ for any $t\in[0,\infty)$ and $i\in\{1,2\}$). In view of Proposition \ref{MainBootstrap} it follows that 
\begin{equation*}
\mathcal{E}^2_{g_1-g_2}(t)\lesssim \|f_{01}-f_{02}\|^2_{\mathcal{G}_{d'}^{\lambda_0,1/3}}\qquad\text{ for any }t\in[0,\infty).
\end{equation*}
This gives the second inequality in \eqref{MainTHM4}. To prove the reverse inequality we set $\lambda_1=0.7\lambda_0$, notice that the function $t\mapsto \mathcal{E}^{\sharp,2}_{g_1-g_2}(t)$ is a continuous map from $[0,\infty)$ to $[0,1]$, and use Proposition \ref{MainBootstrapInfin} to prove that $\mathcal{E}^{\sharp,2}_{g_1-g_2}(t')\lesssim \|(g_1-g_2)(t)\|^2_{\mathcal{G}_{d'}^{3\lambda_0/4,1/3}}$ for any $t'\leq t\in[0,\infty)$. This completes the proof of Theorem \ref{MainThm} (ii).
\end{proof}

\subsection{Proof of Theorem \ref{MainThm22}}  We need again a continuation criterion, similar to Lemma \ref{LemmaProo}.

\begin{lemma}\label{LemmaProo2}
Assume $T_1\leq T_2\in [0,\infty)$, $\lambda_1\in[0.6\lambda_0,0.9\lambda_0]$, and $g\in X^\sharp_{[T_1,T_2]}$ is a real-valued solution of the system \eqref{NVP3} satisfying $\widehat{g}(t,0,0)=0$ for any $t\in[T_1,T_2]$. Assume that
\begin{equation}\label{proo51}
\sup_{t\in[T_1,T_2]}\mathcal{E}^{\sharp,0}_g(t)\leq\rho_0^2,
\end{equation}
where $\rho_0=\rho_0(d,\lambda_0,\vartheta)$ is sufficiently small. Then $g$ can be extended to a real-valued solution $g'\in X^\sharp_{[T'_1,T_2]}$ of the system \eqref{NVP3}, for $T'_1=\max\{0,T_1-c_{d,\lambda_0,\vartheta}(1+T_1)^{-2}\}$, satisfying
\begin{equation}\label{proo52}
\sup_{t\in[T'_1,T_1]}\mathcal{E}^{\sharp,0}_g(t)\leq 2\rho_0^2.
\end{equation}
\end{lemma}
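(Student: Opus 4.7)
The plan is to mirror the parabolic regularization argument from Lemma \ref{LemmaProo}, but running the evolution backwards in time from $T_1$ down to $T_1' < T_1$. For $\varepsilon > 0$ I would consider the regularized equation
\begin{equation*}
\partial_t\widehat{g^\varepsilon}(t,k,\xi) - \varepsilon(|k|^2+|\xi|^2)\widehat{g^\varepsilon}(t,k,\xi) + \mathcal{M}_1(\widehat{g^\varepsilon})(t,k,\xi) + \mathcal{M}_2(\widehat{g^\varepsilon})(t,k,\xi) = 0,
\end{equation*}
with final data $\widehat{g^\varepsilon}(T_1,k,\xi) = \widehat{g}(T_1,k,\xi)$, where $\mathcal{M}_1, \mathcal{M}_2$ are defined as in \eqref{proo8}. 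Note the opposite sign in front of the dissipative term compared to \eqref{proo2}: this is chosen precisely so that the associated Duhamel semigroup $e^{-\varepsilon(|k|^2+|\xi|^2)(T_1-t)}$ is smoothing for $t \leq T_1$, yielding the integral representation
\begin{equation*}
\widehat{g^\varepsilon}(t,k,\xi) = e^{-\varepsilon(|k|^2+|\xi|^2)(T_1-t)}\widehat{g}(T_1,k,\xi) + \sum_{i\in\{1,2\}}\int_t^{T_1}e^{-\varepsilon(|k|^2+|\xi|^2)(s-t)}\mathcal{M}_i(\widehat{g^\varepsilon})(s,k,\xi)\,ds.
\end{equation*}
A contraction-mapping argument in $X^\sharp_{[T_1^\varepsilon, T_1]}$, for $T_1 - T_1^\varepsilon$ sufficiently small depending on $\varepsilon, T_1, d, \lambda_0, \vartheta$, then produces a unique real-valued solution $g^\varepsilon$ that may be assumed to satisfy $\mathcal{E}^{\sharp,0}_{g^\varepsilon}(t)\leq 1.5\rho_0^2$ on its domain.

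Next, I would establish an $\varepsilon$-uniform energy inequality on an interval of length $\sim(1+T_1)^{-2}$. Computing $\tfrac{d}{dt}\mathcal{E}^{\sharp,0,L}_{g^\varepsilon}$ produces the four contributions $I^\sharp_\varepsilon, II^\sharp_\varepsilon, III^\sharp_\varepsilon, IV^\sharp_\varepsilon$ analogous to \eqref{infin40}--\eqref{infin44} together with a nonpositive term from the parabolic regularization. Using the crude bound $|\xi-tk|\lesssim (1+T_1)\langle k,\xi\rangle$ to absorb the linear-in-$t$ factor appearing in $II^\sharp_\varepsilon$ and $III^\sharp_\varepsilon$, and then following the algebra of Lemmas \ref{Lem1ner}--\ref{Lem3ner} without trying to extract any smallness, one obtains
\begin{equation*}
\left|\tfrac{d}{dt}\mathcal{E}^{\sharp,0}_{g^\varepsilon}(t)\right| \lesssim_{d,\lambda_0,\vartheta} (1+T_1)^2\,\mathcal{E}^{\sharp,0}_{g^\varepsilon}(t),
\end{equation*}
with the sign of $I^\sharp_\varepsilon$ irrelevant for such a crude bound. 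A standard continuation argument then extends $g^\varepsilon$ to the whole interval $[T_1', T_1]$ with $T_1' := \max\{0, T_1 - c_{d,\lambda_0,\vartheta}(1+T_1)^{-2}\}$, preserving the bound $\mathcal{E}^{\sharp,0}_{g^\varepsilon}\leq 1.5\rho_0^2$.

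Finally, I would pass to the limit $\varepsilon \to 0$. Setting
\begin{equation*}
\mathcal{L}_{\varepsilon,\varepsilon'}(t):=\sum_{|a|\leq d'}\sum_{k\in\Z^d}\int_{\R^d}\left|D^a_\xi\widehat{(g^\varepsilon-g^{\varepsilon'})}(t,k,\xi)\right|^2\,d\xi,
\end{equation*}
the same type of calculation as in the proof of Lemma \ref{LemmaProo} yields $|\partial_t\mathcal{L}_{\varepsilon,\varepsilon'}(t)|\lesssim_{d,\lambda_0,\vartheta} (\varepsilon+\varepsilon')\mathcal{L}_{\varepsilon,\varepsilon'}(t)^{1/2}+\mathcal{L}_{\varepsilon,\varepsilon'}(t)$ on $[T_1',T_1]$. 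Since $\mathcal{L}_{\varepsilon,\varepsilon'}(T_1)=0$, a Gr\"onwall argument then forces $\|\mathcal{L}_{\varepsilon,\varepsilon'}\|_{L^\infty_t([T_1',T_1])}\to 0$ as $\varepsilon,\varepsilon'\to 0$. The limiting function, glued to $g$ at $T_1$, provides the desired real-valued extension in $X^\sharp_{[T_1', T_2]}$ satisfying \eqref{proo52}. The step I expect to require the most care is simply identifying the correct sign of the regularization so that the backward Duhamel semigroup is smoothing; once this is in place the remainder is a direct adaptation of the forward analysis, since the algebraic properties of the weights $A^\sharp$ mirror those of $A$.
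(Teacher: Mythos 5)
Your proposal is correct and follows essentially the same route as the paper, which proves Lemma \ref{LemmaProo2} simply by adapting the parabolic-regularization argument of Lemma \ref{LemmaProo} to the backward-in-time setting. You correctly identify the one genuine modification — flipping the sign of the $\varepsilon(|k|^2+|\xi|^2)$ term so that the Duhamel kernel $e^{-\varepsilon(|k|^2+|\xi|^2)(s-t)}$, $s\geq t$, is smoothing — and the remaining steps (crude $\varepsilon$-uniform energy inequality on an interval of length $\sim(1+T_1)^{-2}$, Cauchy estimate for $\mathcal{L}_{\varepsilon,\varepsilon'}$ with data vanishing at $T_1$, passage to the limit) match the paper's intended argument.
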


The proof of this lemma is similar to the proof of Lemma \ref{LemmaProo}. 

To prove Theorem \ref{MainThm22} we start with a final state data $g_\infty$ satisfying \eqref{MainTHMXY1}, set $\lambda_1=0.9\lambda_0$,  and construct a sequence of function $g_n\in C\big([0,\infty):\mathcal{G}_{d'}^{\lambda_0/2,1/3}\big)$, $n\geq 1$, satisfying the following properties:
\smallskip

(1) For any $t\in[n,\infty)$ we have $g_n(t)=g_\infty$;

(2) The function $g_n\in C\big([0,n]:\mathcal{G}_{d'}^{\lambda_0/2,1/3}\big)$ is a solution of the system \eqref{NVP3}, satisfying
\begin{equation}\label{proo53}
\sum_{|a|\leq d'}\|D^a_\xi\widehat{g_n}(t,k,\xi)\cdot A^\sharp(t,k,\xi)\|_{L^2_{k,\xi}}\lesssim\overline{\kappa}'\quad\text{ and }\quad\widehat{g_n}(t,0,0)=0\quad\text{ for any }t\in[0,n].
\end{equation}

To prove property (2) we set $g_n(n)=g_\infty$ and use Proposition \ref{MainBootstrapInfin} and Lemma \ref{LemmaProo2} to construct the solution $g_n\in C\big([0,n]:\mathcal{G}_{d'}^{\lambda_0/2,1/3}\big)$ of the system \eqref{NVP3} satisfying the bounds \eqref{proo53}.

We show now that $\{g_n\}_{n\geq 1}$ is a Cauchy sequence in $C\big([0,\infty):\mathcal{G}_{d'}^{0.75\lambda_0,1/3}\big)$. Indeed assume that $n\leq n'\in\mathbb{Z}_+$ are sufficiently large. Clearly $g_n(t)=g_{n'}(t)=g_\infty$ if $t\geq n'$. Moreover, if $t\in[n,n']$ then we use the identity \eqref{proo33}, in the form
\begin{equation*}
\begin{split}
\widehat{g_{n'}}(n',k,\xi)-\widehat{g_{n'}}(t,k,\xi)&+\int_{t}^{n'}\widehat{\rho_{n'}}(s,k)\widehat{M_0}(\xi-sk)\frac{k\cdot (\xi-sk)}{|k|^2}\,ds\\
&+\frac{1}{(2\pi)^d}\sum_{l\in\Z^d}\int_{t}^{n'}\widehat{\rho_{n'}}(s,l)\widehat{g_{n'}}(s,k-l,\xi-sl)\frac{l\cdot(\xi-sk)}{|l|^2}\,ds=0.
\end{split}
\end{equation*}
Then we use the uniform bounds \eqref{proo53} and estimate as in \eqref{proo33.5}--\eqref{proo34} to show that
\begin{equation*}
\|g_{n'}(n')-g_{n'}(t)\|_{\mathcal{G}_{d'}^{0.85\lambda_0,1/3}}\lesssim \overline{\kappa}'e^{-\delta\langle t\rangle^{1/3}}.
\end{equation*}

Since $g_n(t)=g_\infty=g_{n'}(n')$, this shows that $\|g_n(t)-g_{n'}(t)\|_{\mathcal{G}_{d'}^{0.85\lambda_0,1/3}}\lesssim \overline{\kappa}'e^{-\delta\langle t\rangle^{1/3}}$ if $t\in[n,n']$. We can then use Proposition \ref{MainBootstrapInfin} on the time interval $[T_1,T_2]=[0,n]$, with $\lambda_1=0.8\lambda_0$, to show that $\|g_n(t)-g_{n'}(t)\|_{\mathcal{G}_{d'}^{0.75\lambda_0,1/3}}\lesssim \overline{\kappa}'e^{-\delta \langle n\rangle^{1/3}}$ if $t\in[0,n]$. Therefore
\begin{equation}\label{proo55}
\sup_{t\in[0,\infty)}\|g_n(t)-g_{n'}(t)\|_{\mathcal{G}_{d'}^{0.75\lambda_0,1/3}}\lesssim \overline{\kappa}'e^{-\delta\langle n\rangle^{1/3}}\qquad\text{ for any }n\leq n'\in\Z_+.
\end{equation}
In particular, recalling the uniform bounds \eqref{proo53}, $\{g_n\}_{n\geq 1}$ is a Cauchy sequence in $C\big([0,\infty):\mathcal{G}_{d'}^{3\lambda_0/4,1/3}\big)$ and its limit $g\in C\big([0,\infty):\mathcal{G}_{d'}^{3\lambda_0/4,1/3}\big)$ is a solution of the final state problem. 

The uniqueness of this solution follows from the quantitative bounds \eqref{MainTHM4}. The bounds \eqref{MainTHMXY4} follow also as in the proof of Theorem \ref{MainThm}, using Proposition \ref{MainBootstrap} for the first inequality and Proposition \eqref{MainBootstrapInfin} for the second inequality. This completes the proof of Theorem \ref{MainThm22}. 

\subsection{Proof of Theorem \ref{MainThm33}} We define $h_\infty(x,v):=g_\infty(-x,v)$ and use Theorem \ref{MainThm22} to construct the corresponding solution $h\in C([0,\infty):\mathcal{G}_{d'}^{3\lambda_0/4,1/3})$ of the system \eqref{NVP3}. Then we define $g(t,x,v):=h(-t,-x,v)$, and notice that $g\in C([-\infty, 0]:\mathcal{G}_{d'}^{3\lambda_0/4,1/3})$ is a solution of the system \eqref{NVP3} on the time interval $(-\infty,0]$. Finally, we extend the solution $g$ to the time interval $[0,\infty)$, using Theorem \ref{MainThm}, and define the corresponding final state $g_\infty$. The desired bounds \eqref{MainTH4} follow from \eqref{MainTHMXY4} and \eqref{MainTHM4}. 

\section{Proof of Lemma \ref{GreenF}}\label{ProofGreen}

In this section we provide a self-contained proof of Lemma \ref{GreenF}. The implied constants in this proof are allowed to depend only on the parameters $d$, $\lambda_0$, and $\vartheta$. 

(i) Using the definition \eqref{gu14} and integration by parts, we have
\begin{equation}\label{gu14.5}
i\tau L(\tau,k)=\int_0^\infty e^{-i\tau s}\big[\widehat{M_0}(sk)+sk\cdot\nabla\widehat{M_0}(sk)\big]\,ds.
\end{equation}

Using Plancherel theorem and \eqref{M01} it follows that
\begin{equation}\label{gu15.1}
\big\|\langle \alpha\rangle L(\alpha+i\beta,k)\big\|_{L^2_\alpha}\lesssim |k|^{-1/2}\qquad\text{ for any }k\in\Z^d\setminus\{0\},\,\beta\in(-\infty,0].
\end{equation}
The $L^2_\alpha$ estimates in the first line of \eqref{gu19.1} follow using the assumption \eqref{eq:Penrose}.

Moreover, for any integer $a\geq 0$ we use \eqref{M01}, \eqref{gu14}, and \eqref{gu14.5} to estimate
\begin{equation*}
\big|D^a_\tau L(\tau,k)\big|\lesssim \int_0^\infty s^{a+1}e^{-\lambda_0|sk|^{1/3}}\,ds\lesssim \frac{1}{(|k|\lambda_0^3)^{a+2}}\int_0^\infty t^{3a+5}e^{-t}\,ds\lesssim \frac{(3a+5)!}{(|k|\lambda_0^3)^{a+2}}.
\end{equation*}
\begin{equation*}
\big|D^a_\tau (\tau L(\tau,k))\big|\lesssim \int_0^\infty s^ae^{-\lambda_0|sk|^{1/3}}\,ds\lesssim  \frac{(3a+2)!}{(|k|\lambda_0^3)^{a+1}}.
\end{equation*}
Therefore, for any $k\in\Z^d\setminus\{0\}$, $\beta\in(-\infty,0]$, and $a\in\Z_+$ we have
\begin{equation}\label{gu15.2}
\big\|\langle\alpha\rangle D^a_\alpha L(\alpha+i\beta,k)\big\|_{L^\infty_\alpha}\lesssim \frac{(3a+5)!}{(|k|\lambda_0^3)^{a+1}}.
\end{equation}

It remains to prove pointwise estimates on higher order derivatives of $L'$, of the form
\begin{equation}\label{pam25}
\big\|\langle\alpha\rangle D^a_\alpha L'(\alpha+i\beta,k)\big\|_{L^\infty_\alpha}\lesssim \frac{(3a)!(1.01)^{a}}{(|k|\lambda_0^3)^{a+1}},
\end{equation}
for any $k\in\Z^d\setminus\{0\}$, $\beta\in(-\infty,0]$, and $a\in\Z_+$. We start from the Fa\'{a} di Bruno's formula 
\begin{equation}\label{gu20}
D^n(g\circ f)(x)=\sum_\ast\frac{n!}{m_1!(1!)^{m_1}\cdot\ldots\cdot m_n!(n!)^{m_n}}D^{m_1+\ldots+m_n}g(f(x))\cdot\prod_{j=1}^n(D^jf(x))^{m_j},
\end{equation}
for any $n\geq 1$ and any sufficiently smooth functions $f:\R\to\C$ and $g:\C\to\C$, where $\sum\limits_\ast$ denotes the sum over all $n$-tuples of non-negative integers $m_1,\ldots,m_n$ satisfying
\begin{equation}\label{gu21}
1\cdot m_1+\ldots+n\cdot m_n=n.
\end{equation}
In our case, we fix $\beta\in(-\infty,0]$, $k\in\Z^d\setminus\{0\}$, and $g(y)=y/(y+1)$. Clearly $|D^bg(y)|\leq b!|y+1|^{-b-1}$ for any integer $b\geq 0$, thus
\begin{equation*}
|D^n_\alpha L'(\alpha+i\beta,k)|\leq \sum_\ast\frac{n!(m_1+\ldots+m_n)!}{m_1!\cdot\ldots\cdot m_n!\vartheta^{m_1+\ldots+m_n+1}}\prod_{j=1}^n\Big[\frac{D^jL(\alpha+i\beta,k)}{j!}\Big]^{m_j},
\end{equation*}
for any integer $n\geq 1$. It follows from \eqref{gu15.2} that 
\begin{equation}\label{gu22}
\langle\alpha\rangle |D^n_\alpha L'(\alpha+i\beta,k)|\leq \sum_\ast [C_1^\ast]^{m_1+\ldots+m_n}\frac{n!(m_1+\ldots+m_n)!}{m_1!\cdot\ldots\cdot m_n!}\prod_{j=1}^n\Big[\frac{(3j+5)!}{j!(|k|\lambda_0^3)^{j+1}}\Big]^{m_j},
\end{equation}
for any $\alpha\in\R$ and $n\in\Z_+^\ast$, where $C_1^\ast=C_1^\ast(d,\lambda_0,\vartheta)$ is a constant. Thus
\begin{equation}\label{gu24}
\begin{split}
&\frac{\langle\alpha\rangle |D^n_\alpha L'(\alpha+i\beta,k)|(|k|\lambda_0^3)^{n+1}}{(3n)!}\leq \sum_\ast \frac{[C_2^\ast]^{m_1+\ldots+m_n}}{m_1!\cdot\ldots\cdot m_n!}\exp(H_{m_1,\ldots,m_n}),\\
&H_{m_1,\ldots,m_n}:=[\ln(n!)-\ln((3n)!)]+\sum_{j=1}^nm_j[\ln((3j+5)!)-\ln(j!)]+\big[\ln((m_1+\ldots+m_n)!)\big],
\end{split}
\end{equation}
where $C_2^\ast=C_2^\ast(d,\lambda_0,\vartheta)$ is a constant.

To estimate the sum in the right-hand side of \eqref{gu24} we notice that
\begin{equation*}
\Big|\ln 1+\ldots+\ln m-\int_{1/2}^{m+1/2}\ln x\,dx\Big|\leq C\qquad\text{ for any }m\geq 1,
\end{equation*}
where $C$ is an absolute constant. Therefore
\begin{equation}\label{gu23}
\big|\ln (m!)-(m+1/2)\ln(m/e)\big|\leq C^{(1)}\qquad\text{ for any }m\geq 1,
\end{equation}
where $C^{(1)}\geq 1$ is a absolute constant. Thus, for $n\geq 1$,
\begin{equation*}
\ln(n!)-\ln((3n)!)\leq -2n\ln(n/e)-3n\ln 3+2C^{(1)}
\end{equation*}
and
\begin{equation*}
\begin{split}
\sum_{j=1}^nm_j[\ln((3j+5)!)-\ln(j!)]&\leq \sum_{j=1}^nm_j[2C^{(1)}+(2j+5)\ln(j/e)+(3j+5.5)\ln((3j+5)/j)]\\
&\leq\sum_{j=1}^nm_j[C^{(2)}+(2j+5)\ln(j/e)+3j\ln 3],
\end{split}
\end{equation*}
where $C^{(2)}\geq 2C^{(1)}$ is an absolute constant. In view of \eqref{gu21} we have $n=\sum_{j=1}^n jm_j$, thus
\begin{equation}\label{gu25}
\begin{split}
H_{m_1,\ldots,m_n}&\leq \sum_{j=1}^nm_j[C'''+(2j+5)\ln(j/e)-2j\ln (n/e)+\ln(m/e)]\\
&\leq\sum_{j=1}^nm_j[C^{(3)}+7\ln(j/e)-(2j-1)\ln (n/j)],
\end{split}
\end{equation}
where $C^{(3)}\geq C^{(2)}$ is an absolute constant and $m:=m_1+\ldots+m_n\leq n$. It is easy to see that
\begin{equation*}
\sum_{j\in[1,2n/3]\cap \Z}m_j[C^{(3)}+7\ln(j/e)-(2j-1)\ln (n/j)]\leq \sum_{j\in[1,2n/3]\cap \Z}C^{(4)}m_j
\end{equation*}
for some constant $C^{(4)}\geq C^{(3)}$. Also
\begin{equation*}
\sum_{j\in (2n/3,n]\cap \Z}m_j[C^{(3)}+7\ln(j/e)-(2j-1)\ln (n/j)]\leq C^{(3)}+7\ln n,
\end{equation*}
since $\sum_{j\in (2n/3,n]\cap \Z}m_j\leq 1$ (due to the restriction \eqref{gu21}). Therefore it follows from \eqref{gu24} and \eqref{gu25} that
\begin{equation}\label{gu27}
\frac{\langle\alpha\rangle |D^n_\alpha L'(\alpha+i\beta,k)|(|k|\lambda_0^3)^{n+1}}{(3n)!}\leq n^7\sum_\ast \frac{[C_3^\ast]^{m_1+\ldots+m_n}}{m_1!\cdot\ldots\cdot m_n!}
\end{equation}
for any $n\geq 1$, where $C_3^\ast=C_3^\ast(d,\lambda_0,\vartheta)$ is a constant.

Let $C_4^\ast:=\sum_{m\geq 0}(C_3^\ast)^m/(m!)$, let $\rho\in(0,1/2]$ be such that $(C_4^\ast)^\rho=1.005$, and let $J$ denote the largest integer $\leq \rho n$. Notice that if the $n$-tuple $(m_1,\ldots,m_n)$ satisfies \eqref{gu21} then
\begin{equation*}
m_{J+1}+\ldots+m_n\leq 1/\rho.
\end{equation*}
In particular, the number of possible choices of the integers $m_{J+1},\ldots,m_n$ is $\lesssim n^{1/\rho}$. Thus
\begin{equation*}
\sum_\ast \frac{[C_3^\ast]^{m_1+\ldots+m_n}}{m_1!\cdot\ldots\cdot m_n!}\leq\sum_{m_1,\ldots,m_J\in\Z_+}\sum_{m_{J+1}+\ldots+m_n\leq 1/\rho}\frac{[C_3^\ast]^{m_1+\ldots+m_n}}{m_1!\cdot\ldots\cdot m_n!}\lesssim (C_4^\ast)^{J}n^{1/\rho}.
\end{equation*}
The desired bounds \eqref{pam25} follow from \eqref{gu27}.

(ii) We can complete the proof of the lemma. In view of the analyticity of $L'$, we can write
\begin{equation*}
\widehat{G}(s,k)=\frac{1}{2\pi}\int_\R L'(\alpha+i\beta,k)e^{i(\alpha+i\beta) s}\,d\alpha=\frac{e^{-\beta s}}{2\pi}\int_\R L'(\alpha+i\beta,k)e^{i\alpha s}\,d\alpha.
\end{equation*}
We use the uniform bounds \eqref{pam25} and let $\beta\to-\infty$ to conclude that $\widehat{G}(s,k)=0$ if $s<0$. 

To prove the main estimates \eqref{pam27.2} we may assume $|sk|\geq 1$, use the formula above with $\beta=0$, and integrate by parts $a\geq 1$ times when $|\alpha|\leq e^{s|k|}$. Therefore for any $s\geq 1/|k|$
\begin{equation*}
\begin{split}
2\pi|\widehat{G}(s,k)|&\leq \int_{|\alpha|\geq e^{s|k|}}|L'(\alpha,k)|\,d\alpha+|s|^{-a}\Big|\int_{|\alpha|\leq e^{s|k|}}e^{is\alpha}(D^a_\alpha L')(\alpha,k)\,d\alpha\Big|\\
&+\sum_{a'=0}^{a-1}|s|^{-a'-1}\big[|D^{a'}_\alpha L'(e^{s|k|},k)|+|D^{a'}_\alpha L'(-e^{s|k|},k)|\big].
\end{split}
\end{equation*}
We estimate the first term using the $L^2$ bounds \eqref{gu19.1} and the other two terms using the $L^\infty$ bounds \eqref{pam25}. Therefore for any $k\in\Z^d\setminus\{0\}$, $s\geq 1/|k|$, and $a\geq 1$ we have
\begin{equation*}
\begin{split}
|\widehat{G}(s,k)|\lesssim e^{-s|k|/2}+|sk||s|^{-a}\frac{(3a)!(1.01)^{a}}{(|k|\lambda_0^3)^{a+1}}+e^{-s|k|}\sum_{a'=0}^{a-1}|s|^{-a'-1}\frac{(3a')!(1.01)^{a'}}{(|k|\lambda_0^3)^{a'+1}}.
\end{split}
\end{equation*}
Therefore for any $k\in\Z^d\setminus\{0\}$, $s\geq 1/|k|$, and $a\geq 1$
\begin{equation}\label{pam30}
\begin{split}
|\widehat{G}(s,k)|\lesssim e^{-s|k|/2}+|sk|\frac{(3a)!(1.01)^{a}}{(s|k|\lambda_0^3)^{a}}+e^{-s|k|}\sum_{a'=0}^{a-1}\frac{(3a')!(1.01)^{a'}}{(s|k|\lambda_0^3)^{a'}}.
\end{split}
\end{equation}

We let now $a$ denote the smallest integer $\geq \lambda_0(s|k|)^{1/3}/3$. Using the bounds \eqref{gu23} below, 
\begin{equation*}
\begin{split}
\frac{(3a')!(1.01)^{a'}}{(s|k|\lambda_0^3)^{a'}}&\lesssim \exp\big[(3a'+1/2)(\ln (3a'+1)-1)-3a'\ln(\lambda_0(s|k|)^{1/3})+a'\ln(1.01)\big]\\
&\lesssim \exp\big[-3a'+(1/2)\ln(3a'+1)+a'\ln(1.01)\big]
\end{split}
\end{equation*} 
for any integer $a'\in[0,a]$. The desired bounds \eqref{pam27.2} follow from \eqref{pam30}.

\end{document}